\documentclass[runningheads]{llncs}

\usepackage{amsthm}
\usepackage{mathrsfs}

\usepackage[linguistics]{forest}

\usepackage{bussproofs}
\EnableBpAbbreviations
\usepackage{bpextra}

\usepackage{colonequals}

\usepackage{lscape}
\usepackage{pdflscape}

\usepackage{stmaryrd}
\usepackage{mathdots}

\usepackage[geometry]{ifsym}
\usepackage{cmll}

\usepackage{comment}

\usepackage{color}
\usepackage{xspace}

\usepackage{mathtools}
\usepackage{enumerate}

\usepackage{centernot}

\usepackage{hyperref}

\usepackage{cleveref}

\usepackage{multirow}

\usepackage{array}
\newcolumntype{C}[1]{>{\centering\arraybackslash}p{#1}}
\newcolumntype{L}[1]{>{\arraybackslash}p{#1}}

\usepackage{color}
\usepackage{cancel}
\usepackage{verbatim,cmll}
\usepackage{setspace}
\usepackage{lscape}
\usepackage{latexsym,relsize}
\usepackage{multicol}
\usepackage[position=top]{subfig}
\usepackage{tikz}

\usepackage{tabularx}

\usepackage{txfonts}

\usepackage{multicol}
\usetikzlibrary{arrows}
\usetikzlibrary{matrix}
\usetikzlibrary{patterns}
\usetikzlibrary{shapes}
\usetikzlibrary{positioning}

\def\aol{\rule[0.5865ex]{1.38ex}{0.1ex}}

\newcommand{\FH}{\hat{f}}
\newcommand{\FHp}{\hat{f}_{1}}

\newcommand{\FC}{\check{f}}
\newcommand{\GH}{\hat{g}}

\newcommand{\GC}{\check{g}}

\newcommand{\FHS}{\hat{f}^{\,\sharp}}
\newcommand{\FCS}{\check{f}^{\,\sharp}}
\newcommand{\GHF}{\hat{g}^{\,\flat}}
\newcommand{\GCF}{\check{g}^{\,\flat}}

\newcommand{\aatop}{\ensuremath{\top}\xspace}
\newcommand{\abot}{\ensuremath{\bot}\xspace}
\newcommand{\aand}{\ensuremath{\wedge}\xspace}
\newcommand{\aor}{\ensuremath{\vee}\xspace}
\newcommand{\ararr}{\ensuremath{\rightarrow}\xspace}
\newcommand{\alarr}{\ensuremath{\leftarrow}\xspace}
\newcommand{\adrarr}{\ensuremath{\,{>\mkern-7mu\raisebox{-0.065ex}{\rule[0.5865ex]{1.38ex}{0.1ex}}}\,}\xspace}
\newcommand{\adlarr}{\ensuremath{\rotatebox[origin=c]{180}{$\,>\mkern-8mu\raisebox{-0.065ex}{\aol}\,$}}\xspace}
\newcommand{\ATOP}{\hat{\top}}
\newcommand{\ABOT}{\ensuremath{\check{\bot}}\xspace}
\newcommand{\AAND}{\ensuremath{\:\hat{\wedge}\:}\xspace}
\newcommand{\AOR}{\ensuremath{\:\check{\vee}\:}\xspace}
\newcommand{\ARARR}{\ensuremath{\:\check{\rightarrow}\:}\xspace}
\newcommand{\ALARR}{\ensuremath{\:\check{\leftarrow}\:}\xspace}
\newcommand{\ADRARR}{\ensuremath{\hat{{\:{>\mkern-7mu\raisebox{-0.065ex}{\rule[0.5865ex]{1.38ex}{0.1ex}}}\:}}}\xspace}
\newcommand{\ADLARR}{\ensuremath{\:\hat{\rotatebox[origin=c]{180}{$\,>\mkern-8mu\raisebox{-0.065ex}{\aol}\,$}}\:}\xspace}

\renewcommand{\epsilon}{\varepsilon}

\newcommand{\vp}{\overline{p}}

\newcommand{\oz}{\overline{z}}

\newcommand{\bba}{\mathbb{A}}
\newcommand{\bbA}{\mathbb{A}}
\newcommand{\bbL}{\mathbb{L}}

\newcommand{\AATOP}{\hat{\top}}

\newcommand{\marginnote}[1]{\marginpar{\raggedright\tiny{#1}}}

\tikzset{
	treenode/.style = {align=center, inner sep=0pt, text centered},
	Ske/.style = {treenode, ellipse, double, draw=black,
		minimum width=6pt, thick},% arbre rouge noir, noeud noir
	PIA/.style = {treenode, ellipse, black, draw=black,
		minimum width=6pt},% arbre rouge noir, noeud rouge
	Crit/.style = {treenode, rectangle, draw=black,
		minimum width=0.5em, minimum height=0.5em}% arbre rouge noir, nil
}

\theoremstyle{plain}
\newtheorem{thm}{Theorem}[section]

\newtheorem{prop}[thm]{Proposition}

\theoremstyle{definition}
\newtheorem{dfn}{Definition}[section]

\newtheorem{rem}[dfn]{Remark}

\def\fCenter{{\mbox{$\ \vdash\ $}}}

\EnableBpAbbreviations

\newcommand{\fns}{\footnotesize}
\newcommand{\mc}{\multicolumn}

\newcommand{\commment}[1]{}
\numberwithin{equation}{section}

\def\pdra{\mbox{$\,>\mkern-8mu\raisebox{-0.065ex}{\aol}\,$}}
\def\pdla{\mbox{\rotatebox[origin=c]{180}{$\,>\mkern-8mu\raisebox{-0.065ex}{\aol}\,$}}}
\newcommand{\pand}{\wedge}

\newcommand{\por}{\vee}

\newcommand{\pra}{\rightarrow}

\newcommand{\bgamma}{\textcolor{blue}{\gamma}}

\makeatletter
\newcommand{\vast}{\bBigg@{4}}
\newcommand{\Vast}{\bBigg@{6}}
\makeatother

\newcommand{\cceq}{\coloncolonequals}
\newcommand{\ceq}{\colonequals}

\makeatletter
\newcommand{\leqnomode}{\tagsleft@true}
\newcommand{\reqnomode}{\tagsleft@false}
\makeatother

\newcommand{\remove}[1]{}

\forestset{
  <--/.style={ % draw horizontal line to predecessor
    before drawing tree={tikz+={\draw[dotted](!)--(!p);}}},
  -->/.style={ % draw horizontal line to successor
    before drawing tree={tikz+={\draw[ thick,dotted](!)--(!n);}}},
  -->>/.style={ % draw horizontal line to successor cousin
    before drawing tree={tikz+={\draw[dotted](!)--(!>);}}},
}

\title{Refutation calculi for lattice-based logics: from display to tableaux}

\date{}

\begin{document}

\author{Andrea De Domenico\inst{1}\orcidID{0000-0002-8973-7011} \and Giuseppe Greco\inst{1}\orcidID{0000-0002-4845-3821} \and Alessandra Palmigiano\inst{1,2}\orcidID{0000-0001-9656-7527} \and Mario Piazza\inst{3}\orcidID{0000-0002-9545-3912} \and Andrea Sabatini\inst{3}\orcidID{0009-0008-3040-9137}}

\institute{Vrije Universiteit Amsterdam, The Netherlands \and
Department of Mathematics and Applied Mathematics, University of Johannesburg, SA \and
Scuola Normale Superiore di Pisa, Italy
}

\authorrunning{De Domenico, Greco, Palmigiano, Piazza, Sabatini.}

\maketitle

\begin{abstract}
Refutation calculi are formal systems developed to derive the invalid formulas of a given logic. While the notion of refutation calculi has played a key role in the development of tableaux calculi, a refutation approach to display calculi has not yet been attempted. In this paper, we introduce refutation display calculi for basic  LE-logics,  i.e.~those logics canonically associated with basic normal lattice expansions of any signature. 
In particular, we prove soundness and completeness via proof-analysis results on derivable sequents. Finally, we obtain terminating tableaux calculi from these refutation display calculi.

\medskip

\noindent {\em Keywords:} Display calculi, Refutation calculi, LE-logics, Terminating tableaux
\end{abstract}
	
	\section{Introduction}

  Refutation calculi are formal systems which derive the invalid formulas of a given logic \cite{GPS}. Starting with Lukasiewicz’s work \cite{Luk51}, refutation calculi for various logics have been developed using different proof-theoretic formats, including Hilbert-style calculi \cite{Skura92,Goranko94}, Gentzen style sequent calculi \cite{Goranko94,PintoDyck95} and hypersequent calculi \cite{PPT24}. In this paper, we investigate refutation calculi based on display calculi. 
  
  Display calculi,  introduced by Belnap \cite{Belnap}, are deductive systems dealing with sequents $\Pi \vdash \Sigma$ where $\Pi$ and $\Sigma$ are {\em structures}, i.e.~syntactic objects inductively defined from formulas using  so-called {\em structural} connectives. The rules that govern the interaction between these connectives ensure that any substructure of any sequent $\Pi \vdash \Sigma$ can always be displayed, either only in precedent position or only in succedent position. 

  Display calculi have been applied to give cut-free presentations of many non-classical logics, including modal and substructural ones \cite{Wansingdisplaybook,Gore98b}. In this paper, we focus on proper display calculi for the family of basic LE-logics,  i.e.~the logics canonically associated with basic normal lattice expansions of any signature \cite{CoPa11,GMPTZ,CGPT22}. Rather than employing display calculi in the usual role of proof construction, we investigate how they can be adapted for generating refutations -- i.e., derivations of invalid sequents, or {\em antisequents}.

 Specifically, we introduce refutation display calculi $\mathrm{D.LE^{r}}$ for basic normal LE-logics. Starting from initial antisequents, these calculi derive new antisequents through the application of display rules, structural rules, and logical introduction rules. We establish soundness and completeness of each  $\mathrm{D.LE^{r}}$ calculus exploiting proof-analysis results on derivable sequents. Hence, we define {\em inverse} refutation systems for LE-logics by taking the contrapositive forms of the rules of $\mathrm{D.LE^{r}}$ calculi. The resulting systems are terminating tableaux calculi for basic LE-logics: if a sequent is valid, then  some branch exists in its tableau tree such that all the sequents at its terminal node  are valid.

%{\color{blue}Display calculi, originally introduced by Belnap, are known for their structural transparency and uniformity, making them particularly suitable for the proof-theoretic analysis of a wide variety of logical systems. Rather than employing them in the usual role of proof construction, we investigate how display calculi can be systematically adapted for refutation purposes, aimed at establishing non-derivability.

%We develop a method for constructing refutation calculi within the display framework and examine its relation to semantic tableaux, a well-established technique for countermodel generation and satisfiability checking. By analyzing the structural correspondences and divergences between these two approaches, we aim to deepen the understanding of their respective strengths and explore how the modularity of display calculi can be harnessed to simulate and generalize tableau-style reasoning.

%A key outcome of this approach is a new route to proving decidability results. Under specific syntactic and structural constraints, we demonstrate that refutation via display calculi leads to terminating procedures for checking the non-derivability of sequents. We provide soundness and completeness results for the refutation systems, and discuss how their design ensures the admissibility of cut and termination of proof search in relevant fragments. Additionally, we consider the computational complexity of these procedures and outline strategies for their automation, including a discussion of implementation perspectives.}

		\paragraph{\textbf{Structure of the paper.}} 
		Section \ref{sec:preliminaries} contains preliminaries on basic $\mathrm{LE}$-logics and their proper display calculi. In Section \ref{sec:refutational}, we define the refutation display calculi $\mathrm{D.LE^{r}}$. In Section \ref{sec:soundness_completeness}, we prove that $\mathrm{D.LE^{r}}$ calculi are sound and complete, and how consequently basic $\mathrm{LE}$-logics are syntactically decidable. In Section \ref{sec:tableau}, we define tableaux systems for $\mathrm{LE}$-logics based on $\mathrm{D.LE^{r}}$. We sketch directions for future research in Section \ref{sec:conclusions}. 

\section{Preliminaries}
\label{sec:preliminaries}

The present section adapts material from \cite[Section 2]{greco2018algebraic}.

\paragraph{\textbf{Basic normal $\mathrm{LE}$-logics.}}
Our basic language is an unspecified but fixed language $\mathcal{L}_\mathrm{LE}$, to be interpreted over  lattice expansions of compatible similarity type. This setting uniformly accounts for many well known logical systems, such as the basic orthomodular logic \cite{Gold74}, the logic of the non-distributive de Morgan algebras \cite{dislatticebook}, the full Lambek calculus \cite{Lambek61} and the full Lambek-Grishin calculus \cite{Grishin83,Moortgat09}.

We use the following auxiliary definition: an {\em order-type} over $n\in \mathbb{N}$ 
is an $n$-tuple $\epsilon\in \{1, \partial\}^n$. 
For every order-type $\epsilon$, we let $\epsilon^\partial$ denote its {\em opposite} order-type, that is, $\ \epsilon^\partial (i) = 1$ if and only if  $ \epsilon(i)=\partial$ for every $1 \leq i \leq n$. %and $ \epsilon^\partial (i) = \partial$ iff $ \epsilon(i)=1$ for every $1 \leq i \leq n$.
%\marginnote{AP: dobbiamo menzionare solo un iff, l'altro e' conseguenza} %For any lattice $\bba$, we let $\bba^1: = \bba$ and $\bba^\partial$ be the dual lattice, that is, the lattice associated with the converse partial order of $\bba$. For any order-type $\varepsilon$, we let $\bba^\varepsilon: = \Pi_{i = 1}^n \bba^{\varepsilon_i}$.

The language $\mathcal{L}_\mathrm{LE}(\mathcal{F}, \mathcal{G})$ (from now on abbreviated as $\mathcal{L}_\mathrm{LE}$) takes as parameters: a denumerable set of proposition letters $\mathsf{AtProp}$, elements of which are denoted $p,q,r$, possibly with indexes, and disjoint sets of connectives $\mathcal{F}$ and $\mathcal{G}$.\footnote{\label{footnote: f and g operators}
	%It will be clear from the treatment in the present and the following sections that 
%	The connectives in $\mathcal{F}$ (resp.~$\mathcal{G}$) correspond to those referred to as {\em positive} (resp.~{\em negative}) connectives in \cite{ciabattoni2008axioms}. This terminology is not adopted in the present paper to avoid confusion with positive and negative nodes in signed generation trees, defined later in this section.
	%is explained later on in Footnote \ref{footnote: why not adopt terminology of CiGaTe}. 
	The assumption that the sets $\mathcal{F}$ and $\mathcal{G}$ are disjoint does not harm generality. For those connectives whose order-theoretic properties make them belong to both $\mathcal{F}$ and $\mathcal{G}$ (this is the case e.g.~of the Boolean negation $\neg$), we can define two copies $\neg_\mathcal{F}\in\mathcal{F}$ and $\neg_\mathcal{G}\in\mathcal{G}$, and introduce structural rules which encode the fact that these two copies coincide. %Another possibility is to admit a non-empty intersection of the sets $\mathcal{F}$ and $\mathcal{G}$. 
    %Notice that only unary connectives can be both left and right adjoints. %Whenever a connective belongs both to $\mathcal{F}$ and to $\mathcal{G}$, a completely standard solution in the display calculi literature is also available.
    } %(cf.~Remark \ref{rem: overloading the notation} and \ref{rem: both f and g operators}).} 
    Each $f\in \mathcal{F}$ and $g\in \mathcal{G}$ has arity $n_f\in \mathbb{N}$ (resp.~$n_g\in \mathbb{N}$) and is associated with some order-type $\varepsilon_f$ over $n_f$ (resp.~$\varepsilon_g$ over $n_g$). 
%\footnote{
%Unary $f \in \mathcal{F}$ (resp.~$g \in \mathcal{G}$) are sometimes denoted  $\Diamond$ (resp.~$\Box$) if their order-type is 1, and $\lhd$ (resp.~$\rhd$) if their order-type is $\partial$.\footnote{The adjoints of the unary connectives $\Box$, $\Diamond$, $\lhd$ and $\rhd$ are denoted $\Diamondblack$, $\blacksquare$, $\blhd$ and $\brhd$, respectively.} 
The terms (formulas) of $\mathcal{L}_\mathrm{LE}$ are defined recursively as follows:
\[
\varphi \cceq p \mid \bot \mid \top \mid \varphi \wedge \varphi \mid \varphi \vee \varphi \mid f(\varphi_1, \ldots, \varphi_{n_f}) \mid g(\varphi_1, \ldots, \varphi_{n_g})
\] 
where $p \in \mathsf{AtProp}$, $f\in\mathcal{F}$ and $g\in\mathcal{G}$. Terms in $\mathcal{L}_\mathrm{LE}$ are denoted either by $A,B,C$, or by lowercase Greek letters such as $\varphi, \psi, \gamma$. 
In the remainder of the paper, %\marginnote{Apostolos: I replaced the double appearance of ``in the remainder of the paper''; hope it is ok.},
we %will often simplify notation and write e.g.~$f$ for $f^\bbA$, $n$ for $n_f$ and $\varepsilon_i$ for $\varepsilon_{f}(i)$. We also 
extend the $\{1,\partial\}$-notation to the symbols $\vee,\wedge,\bot,\top,\le,\vdash$ by stipulating that the superscript $^1$ denotes the identity map, that $\varphi\vdash^\partial \psi$ stands for $\psi\vdash \varphi$, and defining 
$$
\vee^\partial=\wedge,\qquad \wedge^\partial=\vee,\qquad \bot^\partial=\top,\qquad \top^\partial=\bot,\qquad {\le^\partial}={\ge}.
$$

%Normal LEs constitute the main semantic environment of the present paper. 
Henceforth, %every LE is assumed to be normal, so 
the adjective `normal' referred to LEs will typically be dropped. In what follows, for every $k \in \mathcal{F} \cup \mathcal{G}$ we let $k(\overline{\varphi})[\psi]_i$ indicate that  $\psi$ occurs in the $i$-th coordinate of the vector of arguments $\overline{\varphi}$.
The generic LE-logic is not equivalent to a sentential logic. Hence, the consequence relation of these logics needs to be captured in terms of  sequents, motivating the following definition:
\begin{definition}
	\label{def:LE:logic:general}
	For any language $\mathcal{L}_\mathrm{LE} = \mathcal{L}_\mathrm{LE}(\mathcal{F}, \mathcal{G})$, the {\em basic normal} $\mathcal{L}_\mathrm{LE}$-{\em logic} $\mathbf{L}_\mathrm{LE}$ is a set of sequents $\varphi\vdash\psi$, with $\varphi,\psi\in\mathcal{L}_\mathrm{LE}$, which contains  the following  axioms: %for lattice operations and additional connectives: 
	%		\begin{itemize}
	%			\item Sequents for lattice operations:\footnote{In what follows we will use the turnstile symbol $\vdash$ both as sequent separator and also as the consequence relation of the logic.}
	\begin{center}
\begin{tabular}{c}
$\bot\vdash p, \quad p\vdash p, \quad p\vdash \top,\quad p\vdash p \vee q, \quad q\vdash p\vee q, \quad p\wedge q\vdash p, \quad p\wedge q \vdash q,$ \\
\rule[0mm]{0mm}{6mm}$f(\vp)[q\vee^{\epsilon_f(i)} r]_i \vdash f(\vp)[q]_i \vee f(\vp)[r]_i, \qquad f(\vp)[\bot^{\epsilon_f(i)}]_i \vdash \bot,$\\
\rule[0mm]{0mm}{6mm}$g(\vp)[q]_i \wedge g(\vp)[r]_i \vdash g(\vp)[q\wedge^{\epsilon_g(i)} r]_i, \qquad \top \vdash g(\vp)[\top^{\epsilon_g(i)}]_i,$\\
\end{tabular}
\end{center}
	and is closed under the following inference rules:
	\begin{displaymath}
	\frac{\varphi\vdash \chi\quad \chi\vdash \psi}{\varphi\vdash \psi}
	\qquad
	\frac{\varphi\vdash \psi}{\varphi(\chi/p)\vdash\psi(\chi/p)}
	\qquad
	\frac{\chi\vdash\varphi\quad \chi\vdash\psi}{\chi\vdash \varphi\wedge\psi}
	\qquad
	\frac{\varphi\vdash\chi\quad \psi\vdash\chi}{\varphi\vee\psi\vdash\chi}
	\end{displaymath}
	\begin{center}
    \small{
	$\dfrac{\ \ \ \ \varphi\vdash^{\epsilon_{f}(i)}\psi}{f(p_1,\ldots,\varphi,\ldots,p_n)\vdash f(p_1,\ldots,\psi,\ldots,p_n)} \qquad
	\dfrac{\ \ \ \ \varphi \vdash^{\epsilon_{g}(i)}\psi}{g(p_1,\ldots,\varphi,\ldots,p_n)\vdash g(p_1,\ldots,\psi,\ldots,p_n)}$}
	\end{center}
%	
	%\medskip
%	
	%We let $\mathbf{L}_\mathrm{LE}$ denote the minimal $\mathcal{L}_\mathrm{LE}$-logic. We typically drop reference to the parameters when they are clear from the context. By an {\em $\mathrm{LE}$-logic} we understand any axiomatic extension of $\mathbf{L}_\mathrm{LE}$ in the language $\mathcal{L}_{\mathrm{LE}}$. 
\end{definition}
\iffalse
%For every LE $\bba$, the symbol $\vdash$ is interpreted as the lattice order $\leq$. 
A sequent $\varphi\vdash\psi$ is valid in  an LE $\bba$ if $v(\varphi)\leq v(\psi)$ for every homomorphism $v$ from the $\mathcal{L}_\mathrm{LE}$-algebra of formulas over $\mathsf{AtProp}$ to $\bba$.\footnote{Of course, the restriction of every such homomorphism $v$ to the set of proposition variables is a {\em variable assignment}, and conversely, as is well known,  every variable assignment $v: \mathsf{AtProp} \to \bba$  uniquely extends to a homomorphism from the $\mathcal{L}_\mathrm{LE}$-algebra of formulas over $\mathsf{AtProp}$ to $\bba$. In the remainder of this paper, we will abuse notation and use the same symbol to denote both variable assignments and their homomorphic extensions. Also, sometimes we write $\varphi^{\bba}$ for $v(\varphi)$ when  the interpretation of $v$ is unambiguous.} The notation $\mathbb{LE}\models\varphi\vdash\psi$ indicates that $\varphi\vdash\psi$ is valid in every LE of the appropriate signature. Then, by means of a routine Lindenbaum-Tarski construction, it can be shown that the minimal LE-logic $\mathbf{L}_\mathrm{LE}$ is sound and complete with respect to its corresponding class of algebras $\mathbb{LE}$, i.e.~that any sequent $\varphi\vdash\psi$ is provable in $\mathbf{L}_\mathrm{LE}$ if and only if $\mathbb{LE}\models\varphi\vdash\psi$. %Moreover, it is not hard to see that every consistent LE-logic is characterized by the class of algebras for it.
\fi

\paragraph{\textbf{The fully residuated language $\mathcal{L}_\mathrm{LE}^*$.}}
\label{ssec:expanded language}
Any  language $\mathcal{L}_\mathrm{LE} = \mathcal{L}_\mathrm{LE}(\mathcal{F}, \mathcal{G})$ can be associated with the language $\mathcal{L}_\mathrm{LE}^* = \mathcal{L}_\mathrm{LE}(\mathcal{F}^*, \mathcal{G}^*)$, where $\mathcal{F}^*\supseteq \mathcal{F}$ and $\mathcal{G}^*\supseteq \mathcal{G}$ are obtained by expanding $\mathcal{L}_\mathrm{LE}$ with: 
\begin{enumerate}
	\item an $n_f$-ary connective $f^\sharp_i$ for $1 \leq i \leq n_f$, interpreted as the {\em right residual} of $f\in\mathcal{F}$ in its $i$th coordinate if $\varepsilon_{f}(i) = 1$ (resp.~its {\em Galois-adjoint} if $\varepsilon_{f}(i) = \partial$);
    %\footnote{That is,  the interpretation of $f^\sharp_i$ in any (residuated) LE $\mathbb{A}$ is an $n_f$-ary operation on $\mathbb{A}$ such that, for any $a_1,\ldots, a_{n_f}, b\in \mathbb{A}$, \[f(a_1,\ldots, a_i,\ldots, a_{n_f})\leq b \quad\text{ iff }\quad a_i\leq f^\sharp_i(a_1,\ldots, b,\ldots, a_{n_f}) \quad \text{ if } \varepsilon_{f}(i) = 1\]
	%\[f(a_1,\ldots, a_i,\ldots, a_{n_f})\leq b \quad\text{ iff } \quad f^\sharp_i(a_1,\ldots, b,\ldots, a_{n_f})\leq a_i \quad \text{ if } \varepsilon_{f}(i) = \partial\]}
	\item an $n_g$-ary connective $g^\flat_i$ for $1 \leq i \leq n_g$, interpreted as  the {\em left residual} of $g\in\mathcal{G}$ in its $i$th coordinate if $\varepsilon_{g}(i) = 1$ (resp.~its {\em Galois-adjoint} if $\varepsilon_{g}(i) = \partial$).%\footnote{That is,  the interpretation of $g^\flat_i$ in any (residuated) LE $\mathbb{A}$ is an $n_g$-ary operation on $\mathbb{A}$ such that, for any $a_1,\ldots, a_{n_f}, b\in \mathbb{A}$, \[b\leq g(a_1,\ldots, a_i,\ldots, a_{n_f}) \quad\text{ iff } \quad g^\flat_i(a_1,\ldots, b,\ldots, a_{n_f})\leq a_i \quad \text{ if } \varepsilon_{g}(i) = 1\]
	%\[b\leq g(a_1,\ldots, a_i,\ldots, a_{n_f}) \quad \text{ iff } \quad a_i\leq g^\flat_i(a_1,\ldots, b,\ldots, a_{n_f}) \quad \text{ if } \varepsilon_{g}(i) = \partial\]}
	% $ g^\flat_j$ for each and $g\in \mathcal{G}$, where and $0\leq j\leq n_g$ ($f^\sharp_i$ is the right residual of $f$ in the $i$-th coordinate, and $g^\flat_j$ is the left residual of $g$ in the $j$-th coordinate).
	%\footnote{The adjoints of the unary connectives $\Box$, $\Diamond$, $\downarrow$ and $\uparrow$ are denoted $\Diamondblack$, $\blacksquare$, $\blhd$ and $\brhd$, respectively.}
\end{enumerate}
We let
$f^\sharp_i\in\mathcal{G}^*$ if $\varepsilon_{f}(i) = 1$, and $f^\sharp_i\in\mathcal{F}^*$ if $\varepsilon_{f}(i) = \partial$. Dually, $g^\flat_i\in\mathcal{F}^*$ if $\varepsilon_{g}(i) = 1$, and $g^\flat_i\in\mathcal{G}^*$ if $\varepsilon_{g}(i) = \partial$. The order-type of additional connectives agrees with that of their intended interpretations. That is, for any $f\in \mathcal{F}$ and $g\in\mathcal{G}$, 
%each $g^\flat_j\in\mathcal{F}$, for each coordinate $i$ in $f$ or $g$,
	\begin{itemize}
		\item $\epsilon_{f_i^\sharp}(i) = \epsilon_{f}(i)$ and $\epsilon_{f_i^\sharp}(j) = \epsilon_{f}(j) \cdot \epsilon_{f}^\partial(i)$ for any $j\neq i$,
		\item $\epsilon_{g_i^\flat}(i) = \epsilon_{g}(i)$ and $\epsilon_{g_i^\flat}(j) = \epsilon_{g}(j) \cdot \epsilon_{g}^\partial(i)$ for any $j\neq i$.
	\end{itemize}
    \remove{
    \begin{enumerate}
		\item if $\epsilon_f(i) = 1$, then $\epsilon_{f_i^\sharp}(i) = 1$ and $\epsilon_{f_i^\sharp}(j) = \epsilon_f^\partial(j)$ for any $j\neq i$.
		\item if $\epsilon_f(i) = \partial$, then $\epsilon_{f_i^\sharp}(i) = \partial$ and $\epsilon_{f_i^\sharp}(j) = \epsilon_f(j)$ for any $j\neq i$.
		\item if $\epsilon_g(i) = 1$, then $\epsilon_{g_i^\flat}(i) = 1$ and $\epsilon_{g_i^\flat}(j) = \epsilon_g^\partial(j)$ for any $j\neq i$.
		\item if $\epsilon_g(i) = \partial$, then $\epsilon_{g_i^\flat}(i) = \partial$ and $\epsilon_{g_i^\flat}(j) = \epsilon_g(j)$ for any $j\neq i$.
	\end{enumerate}
}
where the product between $1$ and $\partial$ is isomorphic to the restriction of the product on integers to $\{1, -1\}$. For instance, if $f$ and $g$ are binary connectives such that $\varepsilon_f = (1, \partial)$ and $\varepsilon_g = (\partial, 1)$, then $\varepsilon_{f^\sharp_1} = (1, 1)$, $\varepsilon_{f^\sharp_2} = (1, \partial)$, $\varepsilon_{g^\flat_1} = (\partial, 1)$ and $\varepsilon_{g^\flat_2} = (1, 1)$.\footnote{This notation depends on the choice of the primitive connective and must be carefully adapted to well-known cases. For instance, the `fusion' connective $\circ$ (which, when denoted  as $f$, is such that $\varepsilon_f = (1, 1)$) has residuals
	$f_1^\sharp$ and $f_2^\sharp$, usually written as $/$ and
	$\backslash$ respectively. However, if $\backslash$ is taken as the primitive $g$, then $g_2^\flat$ is $\circ = f$, and
	$g_1^\flat(x_1, x_2): = x_2/x_1 = f_1^\sharp (x_2, x_1)$. This example shows
	that, when identifying $g_1^\flat$ and $f_1^\sharp$,  the conventional order of the coordinates is not preserved and depends on the chosen primitive.}

\begin{definition}\label{def:tense lattice logic}
	For any language $\mathcal{L}_\mathrm{LE}(\mathcal{F}, \mathcal{G})$, its associated basic $\mathcal{L}_\mathrm{LE}^\ast$-{\em logic} is defined by specializing Definition \ref{def:LE:logic:general} to the language $\mathcal{L}_\mathrm{LE}^* = \mathcal{L}_\mathrm{LE}(\mathcal{F}^*, \mathcal{G}^*)$ %a set of sequents $\varphi\vdash\psi$ with $\varphi,\psi\in\mathcal{L}_\mathrm{LE}^*$, which contains the axioms of the LE-logic $\mathbb{L}_\mathrm{LE}$, and is closed under rules for LE-logics plus
	and closing under the following additional residuation rules for $f\in \mathcal{F}$ and $g\in \mathcal{G}$:
	$$
	\begin{array}{cc}
	\AxiomC{$f(\varphi_1,\ldots,\varphi,\ldots, \varphi_{n_f}) \vdash \psi$}
	\doubleLine
	%			\RightLabel{$( = 1)$}
	\UnaryInfC{$\varphi\vdash^{\epsilon_{f}(i)} f^\sharp_i(\varphi_1,\ldots,\psi,\ldots,\varphi_{n_f})$}
	\DisplayProof
	&\qquad\qquad
	\AxiomC{$\varphi \vdash g(\varphi_1,\ldots,\psi,\ldots,\varphi_{n_g})$}
	\doubleLine
	%			\RightLabel{$(\epsilon_g(i) = 1)$}
	\UnaryInfC{$g^\flat_i(\varphi_1,\ldots, \varphi,\ldots, \varphi_{n_g})\vdash^{\epsilon_{g}(i)} \psi$}
	\DisplayProof
	\end{array}
	$$
	%			$$
	%			\begin{array}{cc}
	%			\AxiomC{$f(\varphi_1,\ldots,\varphi,\ldots, \varphi_{n}) \vdash \psi$}
	%			\doubleLine
	%			\RightLabel{$(\epsilon_f(i) = \partial)$}
	%			\UnaryInfC{$f^\sharp_i(\varphi_1,\ldots,\psi,\ldots,\varphi_{n})\vdash \varphi$}
	%			\DisplayProof
	%			&\quad
	%			\AxiomC{$\varphi \vdash g(\varphi_1,\ldots,\psi,\ldots,\varphi_{n})$}
	%			\doubleLine
	%			\RightLabel{($\epsilon_g(i) = \partial)$}
	%			\UnaryInfC{$\psi\vdash g^\flat_i(\varphi_1,\ldots, \varphi,\ldots, \varphi_{n})$}
	%			\DisplayProof
	%			\end{array}
	%			$$	
	The double line indicates that the rules above can be applied top-down and bottom-up. %in each rule above is a shorthand for two separate rules: one where the upper sequent serves as the premise and the lower as the conclusion, and another where the lower sequent is the premise and the upper is the conclusion.
\end{definition}
\paragraph{\textbf{Display calculi for basic normal LE-logics.}} %\label{ssec:syntactic frames associated w algebras}
Let $\mathcal{L} = \mathcal{L} (\mathcal{F}, \mathcal{G})$ be a fixed but arbitrary LE-signature.
%
%In this section we let $\mathcal{L} = \mathcal{L}(\mathcal{F}, \mathcal{G})$ be a fixed but arbitrary LE-signature (cf.~Section \ref{subset:language:algsemantics}.1) and define the
 %display calculus $\mathrm{D.LE}$ for the basic normal $\mathcal{L}_{LE}$-logic and the display calculus $\mathrm{D.LE^\ast}$ for the fully residuated normal $\rbL^\ast_{LE}$ generated by the basic normal $\mathcal{L}_{LE}$-logic simultaneously. Their cut-free counterparts are denoted by $\mathrm{\cfDLE}$ and  $\mathrm{\cfDLE^\ast}$, respectively. %display calculus $\mathrm{D.LE^\ast}$ for the fully residuated normal $\rbL^\ast_{LE}$ generated by the basic normal $\mathcal{L}_{LE}$-logic and and its cut-free counterpart $\mathrm{\cfDLE^\ast}$. The display calculus $\mathrm{D.LE}$ for the basic normal $\mathcal{L}_{LE}$-logic is obtained from $\mathrm{D.LE^\ast}$ by deleting the logical (also called operational) rules for the operators that do not occur in the original LE-signature. 
 %The display calculus $\mathrm{D.LE^\ast}$ is obtained from $\mathrm{D.LE}$ by adding the logical (also called operational) rules for the operators that do not occur in the original LE-signature. 
 %As is usual of existing logical systems which the present framework intends to capture (e.g.~intuitionistic and bi-intuitionistic logics, or modal and tense logics \cite{gore1998substructural}), the languages manipulated by these calculi are built up using {\em one and the same} set of structural terms, and differ only in the set of operational (also called logical) term constructors. 
 Let $S_{\mathcal{F}} \ceq \{\FH \mid f\in \mathcal{F}^*\}$ and $S_{\mathcal{G}}\ceq \{\GC \mid g\in \mathcal{G}^*\}$ be the sets of {\em structural connectives} associated with  $\mathcal{F}^*$ and $ \mathcal{G}^*$ respectively.\footnote{For any connective $h$ of arity $n \geq 1$, the symbol $\hat{h}$ (resp.~$\check{h}$) conveys  the information that $h$ is a left (resp.~right) adjoint/residual.} Each  structural connective inherits the arity and  order-type  of its associated operational connective in $ \mathcal{F}^*$ and $\mathcal{G}^*$.

\remove{
\begin{remark} \label{rem: overloading the notation} If $f\in \mathcal{F}$ and $g\in \mathcal{G}$ form a dual pair,\footnote{Examples of dual pairs are $(\top, \bot)$, $(\wedge, \vee)$, $(\pdra, \pra)$, $(\pdla, \leftarrow)$, and $(\Diamond, \Box)$ where $\Diamond$ is defined as $\neg\Box\neg$.} %\footnote{The connectives $f\in \mathcal{F}$ and $g\in \mathcal{G}$ form a {\em dual pair} if $n_f = n_g = n\geq 1$, and for every DLE-relational structure in which $f$ and $g$ are interpreted by means of the $n+1$-ary relations $R$ and $S$ respectively, $R^{-1}[X_1,\ldots, X_n] = (S^{-1}[X_1^c,\ldots, X_n^c])^c$ for every $n$-tuple $(X_1,\ldots, X_n)$ of potential interpretants of proposition variables. For any set $W$ and any $n+1$-ary relation $R$ on $W$, we let $R^{-1}[X_1,\ldots, X_n]: = \{y\mid R(y, x_1,\ldots, x_n)$ for some $x_i\in X_i, 1\leq i\leq n\}$.},\marginnote{check definition of dual pair in the footnote. This is very cumbersome, but the thing is that I don't want to give it in terms of boolean negation, since e.g.~it is also true in the distributive case}
	then $n_f = n_g$ and $\varepsilon_f = \varepsilon_g$. Then $f$ and $g$ can be assigned one and the same structural operator $H$, which is interpreted as $f$ when occurring in precedent position and as $g$ when occurring in succedent position (cf.~Footnote \ref{footnote: precedent succedent}):	
	\begin{center}
		\begin{tabular}{|r|c|c|}
			\hline
			\scriptsize{Structural symbols} & \mc{2}{c|}{$H$} \\
			\hline
			\scriptsize{Operational symbols} & $f$ & $g$ \\
			\hline
		\end{tabular}
	\end{center}
	Moreover, for any $1\leq i\leq n_f = n_g$, the residuals $f_i^\sharp$ and $g_i^\flat$ are dual to one another. Hence they can also be assigned one and the same structural connective as follows:
	
	\begin{center}
		\begin{tabular}{|r|c|c|c|c|}
			\hline
			\scriptsize{Order-type}              & \mc{2}{c|}{$\varepsilon_f(i) = \varepsilon_g(i) = 1$} & \mc{2}{c|}{$\varepsilon_f(i) = \varepsilon_g(i) = \partial$} \\
			\hline
			\scriptsize{Structural symbols} & \mc{2}{c|}{$H_i$} & \mc{2}{c|}{$H_i$} \\
			\hline
			\scriptsize{Operational symbols} & \ \,\,$(g_i^\flat)$\ \,\, & $\rule[-1.2ex]{0pt}{0ex}(f_i^\sharp)\rule{0pt}{2.5ex}$ & \ \,\,$(f_i^\sharp)$\ \,\, & $(g_i^\flat)$\\
			\hline
		\end{tabular}
	\end{center}
	%Notice that ($f, g$) and ($g^\flat, f^\sharp$) are (coordinate-wise) \emph{dual pairs}, while the operators ($f, f^\sharp$) and ($g^\flat, g$) are (coordinatewise) \emph{residual pairs} as $f \nvdash f^\sharp$ and $g^\flat \nvdash g$.
 This observation has made it possible to associate one structural connective with two logical connectives, which has become common in the display calculi literature. In this paper, we prefer to maintain a strict one-to-one correspondence between operational and structural symbols. 
 
If we admit that the sets $\mathcal{F}$ and $\mathcal{G}$ have a non-empty intersection (cf.~Footnote \ref{footnote: f and g operators}), then a unary connective $h \in \mathcal{F} \cap \mathcal{G}$ can be assigned one and the same structural operator $\tilde{h}$, which is interpreted as $h$ when occurring in precedent position and in succedent position:	
	\begin{center}
		\begin{tabular}{|r|c|c|}
			\hline
			\scriptsize{Structural symbols} & \mc{2}{c|}{$\rule[2.2ex]{0pt}{0ex}\tilde{h}$} \\
			%\scriptsize{Structural symbols} & \mc{2}{c|}{$H$} \\
			\hline
			\scriptsize{Operational symbols} & $h$ & $h$ \\
			\hline
		\end{tabular}
	\end{center}
\end{remark}}

For  convenience, we set $\mathcal F^\partial:=\mathcal G$ and 
$\mathcal G^\partial:=\mathcal F$, and for any order-type $\epsilon$ on $n$, we let $\mathsf{Str}_\mathcal{F}^{\epsilon} : = \prod_{i = 1}^{n}\mathsf{Str}_{\mathcal{F}^{\epsilon(i)}}$ and $\mathsf{Str}_\mathcal{G}^{\epsilon} : = \prod_{i = 1}^{n}\mathsf{Str}_{\mathcal{G}^{\epsilon(i)}}$, where the sets $\mathsf{Str}_\mathcal F$ (resp.~$\mathsf{Str}_\mathcal G$) of {\em precedent} (resp.~{\em succedent}) structures   are defined recursively as follows:  
%
%, where for all $1 \leq i \leq n$,
%
%\begin{center}
%\begin{tabular}{ll}
%$\mathsf{Str}_\mathcal{F}^{\epsilon(i)} = \begin{cases} 
%\mathsf{Str}_\mathcal{F} &\mbox{ if } \epsilon(i) = 1\\
%\mathsf{Str}_\mathcal{G} &\mbox{ if } \epsilon(i) = \partial
%\end{cases}\quad$
%&
%$\mathsf{Str}_\mathcal{G}^{\epsilon(i)} = \begin{cases}
%\mathsf{Str}_\mathcal{G}& \mbox{ if } \epsilon(i) = 1,\\
%\mathsf{Str}_\mathcal{F} & \mbox{ if } \epsilon(i) = \partial.
%\end{cases}$
%\end{tabular}
%\end{center}
%The calculi $\mathrm{D.LE}$ manipulates $\mathcal{L}_\mathrm{LE}$-formulas:
%\begin{align*}
%\mathcal{L}_\mathrm{LE}\ni \varphi \ & ::=\ p \mid \bot \mid \top \mid  \varphi \vee \varphi \mid \varphi \wedge \varphi \mid f (\varphi_1, \ldots, \varphi_{n_f}) \mid g (\varphi_1, \ldots, \varphi_{n_g}) 
%\end{align*}
%where $p$ is an atomic formula and $f \in \mathcal{F}$ and $g \in \mathcal{G}$.

 %\marginnote{AP: this introduces a bit of clash of notation with the way we use $\Gamma$ and $\Delta$ in section 3. I would suggest to write $\Gamma\vdash \Delta$ as $\Pi\vdash \Sigma$, where $\Pi$ stands for `precedent', and $\Sigma$ stands for `succedent', but then we need to find another capital Greek letter also for the neutral one (how about $\Upsilon$? Or shall we go back to\\ $X$ for precedent structures and\\ $Y$ for succedent ones? G: I implemented your first suggestion.}

\begin{center}
	\begin{tabular}{@{}r@{}l@{}@{}r@{}l@{}}
		$\rule[-1.2ex]{0pt}{0ex}\mathsf{Str}_\mathcal{F} \ni \Pi$ \ & $ \cceq  \varphi \mid \AATOP \mid \FH\, (\overline{\Pi}^{(\varepsilon_f)})$ & $\hspace{0.5cm}\rule[-1.2ex]{0pt}{0ex}\mathsf{Str}_\mathcal{G} \ni \Sigma$ \ & $ \cceq  \varphi \mid \ABOT \mid \GC\, (\overline{\Sigma}^{(\varepsilon_g)})$  \\
	\end{tabular}
\end{center}
with $\varphi\in \mathcal{L}_\mathrm{LE}$, and $\FH \in S_{\mathcal{F}}$, $\GC\in S_{\mathcal{G}}$, $\overline{\Pi}^{(\varepsilon_f)}\in \mathsf{Str}_\mathcal{F}^{\epsilon_f}$ and $\overline{\Sigma}^{(\varepsilon_g)} \in \mathsf{Str}_\mathcal{G}^{\epsilon_g}$.  
%with $\Delta^{1} = \Delta$ and $\Delta^{\partial} = \Gamma$. 

% for all $1 \leq i \leq n_f$,
%\[
%x^{\epsilon_f(i)} \in \begin{cases} 
%\mathsf{Str}_\mathcal{F}&\mbox{ if } \epsilon_f(i) = 1\\
%\mathsf{Str}_\mathcal{G} &\mbox{ if } \epsilon_f(i) = \partial
%\end{cases}
%\] 
%and for all $1 \leq i \leq n_g$,
%\[
%y^{\epsilon_g(i)} \in \begin{cases}
%\mathsf{Str}_\mathcal{G}& \mbox{ if } \epsilon_g(i) = 1,\\
%\mathsf{Str}_\mathcal{F} & \mbox{ if } \epsilon_g(i) = \partial.
%\end{cases}
%\]
%In what follows, we let $\bari{x} := (x_1, \ldots, x_{i-1}, x_{i+1},\ldots, x_n)$ and $\bari{x}_z := (x_1, \ldots, x_{i-1}, z, x_{i+1},\ldots, x_n)$. $\bari{y}$ and $\bari{y}_z$ are defined likewise. 

In what follows, we use $\Upsilon_1, \ldots, \Upsilon_n$ as structure metavariables in $\mathsf{Str} \coloneqq \mathsf{Str}_\mathcal{F} \cup \mathsf{Str}_\mathcal{G}$. The introduction rules of the calculus below ensures that $\Upsilon \in \mathsf{Str}_\mathcal{F}$ (resp.~$\Upsilon \in \mathsf{Str}_\mathcal{G}$) whenever it occurs in precedent (resp.~succedent) position. 
The calculus $\mathrm{D.LE} = \mathrm{D.LE}_{\mathcal{L}}$  manipulates sequents $\Pi \vdash \Sigma$, and consists of the following rules: %\footnote{For any LE-language $\mathcal{L}$, we sometimes let $\mathrm{D.LE}^\ast \ceq  \mathrm{D.LE}_{\mathcal{L}^\ast}$, i.e. $\mathrm{D.LE}^\ast$ denote the calculus  obtained by instantiating the general definition of the basic calculus $\mathrm{D.LE}_{\mathcal{L}}$ to $\mathcal{L} \ceq \mathcal{L}^\ast$.}

\begin{itemize}
	\item Identity and cut rules:%\footnote{In the display calculi literature, the identity rule is sometimes defined as $\varphi \fCenter \varphi$, where $\varphi$ is an arbitrary, possibly complex, formula. The difference is inessential, given that, in any display calculus, $p \fCenter p$ is an instance of $\varphi \fCenter \varphi$, and $\varphi \fCenter \varphi$ is derivable for any formula $\varphi$ whenever $p \fCenter p$ is the Identity rule.}
\end{itemize}
\begin{center}
	\begin{tabular}{rl}
		\AXC{\phantom{$\Gamma \fCenter \varphi$}}
		\LL{\fns Id}
		\UI$p \fCenter p$
		\DP
		& 
		\AX$\Pi \fCenter \varphi$
		\AX$\varphi \fCenter \Sigma$
		\RL{\fns Cut}
		\BI$\Pi \fCenter \Sigma$
		\DP
		\\
	\end{tabular}
\end{center}

%\item Display rules: 
%$$
%			\begin{array}{cc}
%			\AX$\FH (\obx) \fCenter y$
%			\doubleLine
%			\LL{$(\epsilon_{f,i} = 1)$}
%			\UI$x_i \fCenter \G_{f^\sharp_{i}}(\bari{x}_y)$
%			\DP
%			&
%			\quad
%			\AxiomC{$\Gamma \Rightarrow \GC(\ory)$}
%			\doubleLine
%			\RightLabel{$(\epsilon_{g,i} = 1)$}
%			\UnaryInfC{$\F_{g^\flat_{i}}(\bari{y}_x)\Rightarrow y_i$}
%			\DisplayProof
%			\end{array}
%			$$
%			$$
%			\begin{array}{cc}
%			\AxiomC{$\FH(\obx) \Rightarrow y$}
%			\doubleLine
%			\LeftLabel{$(\epsilon_{f,i} = \partial)$}
%			\UnaryInfC{$\F_{f^\sharp_{i}}(\bari{x}_y)\Rightarrow x_i$}
%			\DisplayProof
%			&
%			\quad
%			\AxiomC{$x \Rightarrow \GC(\ory)$}
%			\doubleLine
%			\RightLabel{($\epsilon_{g,i} = \partial)$}
%			\UnaryInfC{$y_i \Rightarrow \G_{g^\flat_{i}}(\bari{y}_x)$}
%			\DisplayProof
%			\end{array}
%			$$

\begin{itemize}		
	\item Display postulates for $f\in \mathcal{F}$ and $g\in \mathcal{G}$: for any $1\leq i,j\leq n_f$ and $1\leq h,k\leq n_g$,
\end{itemize}
\begin{itemize}
	\item[] If $\varepsilon_{f}(i) = 1$ and $\varepsilon_{g}(h) = 1$,
\end{itemize}
\begin{center}
	\begin{tabular}{@{}c@{}c@{}}
		\AXC{$\Pi \fCenter \GC\, (\Upsilon_1 \ldots, \Sigma_h, \ldots \Upsilon_{n_g})$}
		\doubleLine
		\LL{\fns $\GHF_h \dashv \GC$}
		\UIC{$\GHF_h\, (\Upsilon_1, \ldots, \Pi, \ldots, \Upsilon_{n_g}) \fCenter \Sigma_h$}
		\DP 
         &
        \AXC{$\FH\, (\Upsilon_1, \ldots, \Pi_i, \ldots, \Upsilon_{n_f}) \fCenter \Sigma$}
		\doubleLine
		\RL{\fns $\FH \dashv \FCS_i$}
		\UIC{$\Pi_i \fCenter \FCS_i\, (\Upsilon_1, \ldots, \Sigma, \ldots, \Upsilon_{n_f})$}
		\DP \\
	\end{tabular}
\end{center}
\begin{itemize}
	\item[] If $\varepsilon_{f}(j) = \partial$ and $\varepsilon_{g}(k) = \partial$,
\end{itemize}
\begin{center}
	\begin{tabular}{@{}c@{}c@{}}					
		\AX$\FH\, (\Upsilon_1, \ldots, \Sigma_j, \ldots, \Upsilon_{n_f}) \fCenter \Sigma$
		\doubleLine
		\LL{\fns $(\FH, \FHS_j)$}
		\UI$\FHS_j\, (\Upsilon_1, \ldots, \Sigma, \ldots, \Upsilon_{n_f}) \fCenter \Sigma_j$
		\DP
		& 
		\AX$\Pi \fCenter \GC\, (\Upsilon_1, \ldots, \Pi_k, \ldots, \Upsilon_{n_g})$
		\doubleLine
		\RL{\fns $(\GC, \GCF_k)$}
		\UI$ \Pi_k \fCenter \GCF_k\, (\Upsilon_1, \ldots, \Pi, \ldots, \Upsilon_{n_g})$
		\DP \\
	\end{tabular}
\end{center}

The structural connectives of the form $\FHS$ and $\GCF$ are referred to as {\em residuals}.

\begin{itemize}
	\item Structural rules for lattice connectives:
\end{itemize}
\begin{center}
	\begin{tabular}{rl}
		\AX$\AATOP \fCenter \Sigma$
		\LL{\fns $\aatop_W$}
		\UI$\Pi \fCenter \Sigma$
		\DP
		& 
		\AX$\Pi \fCenter \ABOT$
		\RL{\fns $\abot_W$}
		\UI$\Pi \fCenter \Sigma$
		\DP
	\end{tabular}
\end{center}

\begin{itemize}
	\item Logical introduction rules for lattice connectives:
\end{itemize}
\begin{center}
	\begin{tabular}{rl}
		\AX$\AATOP \fCenter \Sigma$
		\LL{\fns $\aatop_L$}
		\UI$\aatop \fCenter \Sigma$
		\DP
		\ 
		\AXC{$\phantom{\AATOP \fCenter \Sigma}$}
		\RL{\fns $\aatop_R$}
		\UI$\AATOP \fCenter \aatop$
		\DP
		& 
		\AXC{$\phantom{\Pi \fCenter \ABOT}$}
		\LL{\fns $\abot_L$}
		\UI$\abot \fCenter \ABOT$
		\DP
		\ 
		\AX$\Pi \fCenter \ABOT$
		\RL{\fns $\abot_R$}
		\UI$\Pi \fCenter \abot$
		\DP
		\\
		
		& \\
		
		\AX$\psi \fCenter \Sigma$
		\LL{\fns $\aand_{L2}$}
		\UI$\varphi \aand \psi \fCenter \Sigma$
		\DP
		\ 
		\AX$\varphi \fCenter \Sigma$
		\LL{\fns $\aand_{L1}$}
		\UI$\varphi \aand \psi \fCenter \Sigma$
		\DP
		& 
		\AX$\Pi \fCenter \varphi$
		\AX$\Pi \fCenter \psi$
		\RL{\fns $\aand_R$}
		\BI$\Pi \fCenter \varphi \aand \psi$
		\DP
		\\
		
		& \\
		
		\AX$\varphi \fCenter \Sigma$
		\AX$\psi \fCenter \Sigma$
		\LL{\fns $\aor_L$}
		\BI$\varphi \aor \psi \fCenter \Sigma$
		\DP
		& 
		\AX$\Pi \fCenter \varphi$
		\RL{\fns $\aor_{R1}$}
		\UI$\Pi \fCenter \varphi \aor \psi$
		\DP
		\ 
		\AX$\Pi \fCenter \psi$
		\RL{\fns $\aor_{R2}$}
		\UI$\Pi \fCenter \varphi \aor \psi$
		\DP
		\\
	\end{tabular}
	
\end{center}

\begin{itemize}
	\item Logical introduction rules for $f\in\mathcal{F}$ and $g\in\mathcal{G}$:
\end{itemize}

\begin{center}
    \begin{tabular}{c c}
         \bottomAlignProof
                        \AxiomC{$\Big(\varphi_i \fCenter^{\!\!\epsilon_{{g}}(i)}\; \Upsilon_i \,\mid\, 1\leq i\leq n_g \Big)$}
						\LL{\fns$g_L$}
						\UI$g(\overline{\varphi}) \fCenter \GC\, (\overline{\Upsilon})$
						\DP
						
         &  
         \bottomAlignProof
						
                        \AxiomC{$\Big(\Upsilon_i \fCenter^{\!\!\epsilon_{f}(i)}\, \varphi_i  \mid 1\leq i\leq n_f\Big)$}
						\RL{\fns$f_R$}
						\UI$\FH\, (\overline{\Upsilon})\fCenter f(\overline{\varphi})$
						\DP
    \end{tabular}
\end{center}

\remove{
\begin{center}
	\begin{tabular}{c}
		\bottomAlignProof
		\AxiomC{$\Big(\Upsilon_i \fCenter \varphi_i \quad \varphi_j \fCenter \Upsilon_j \mid 1\leq i, j\leq n_f, \varepsilon_{f}(i) = 1\mbox{ and } \varepsilon_{f}(j) = \partial\Big)$}
		\RL{\fns$f_R$}
		\UI$\FH\, (\Upsilon_1,\ldots, \Upsilon_{n_f})\fCenter f(\varphi_1,\ldots, \varphi_{n_f})$
		\DP
	\end{tabular}
\end{center}
\begin{center}
	\begin{tabular}{c}
		\bottomAlignProof
		\AxiomC{$\Big( \varphi_i \fCenter \Upsilon_i \quad \Upsilon_j \fCenter \varphi_j \,\mid\, 1\leq i, j\leq n_g, \varepsilon_{g}(i) = 1\mbox{ and } \varepsilon_{g}(j) = \partial \Big)$}
		\RL{\fns$g_L$}
		\UI$g(\varphi_1,\ldots, \varphi_{n_g}) \fCenter \GC\, (\Upsilon_1,\ldots, \Upsilon_{n_g})$
		\DP
	\end{tabular}
\end{center}
}
\begin{center}
	\begin{tabular}{c c}
		\bottomAlignProof
		\AX$\FH\, (\varphi_1,\ldots, \varphi_{n_f}) \fCenter \Sigma$
		\LL{\fns$f_L$}
		\UI$f(\varphi_1,\ldots, \varphi_{n_f}) \fCenter \Sigma$
		\DP
		&
		\bottomAlignProof
		\AX$\Pi \fCenter \GC\, (\varphi_1,\ldots, \varphi_{n_g})$
		\RL{\fns$g_R$}
		\UI$\Pi \fCenter g(\varphi_1,\ldots, \varphi_{n_g})$
		\DP
	\end{tabular}
\end{center}

%where $i$-monotone ($j$-antitone) means that the operator is order preserving (order reversing) at the $i$-th ($j$-th) coordinate.
If $f$ and $g$ are $0$-ary (i.e.~they are constants), the rules $f_R$ and $g_L$ above reduce to the axioms (aka $0$-ary rules) $\FH \fCenter f$ and $g \fCenter \GC$.

\remove{\begin{rem}\label{rem: both f and g operators}
If we let  $\mathcal{F}$ and $\mathcal{G}$ have a nonempty intersection (cf.~Footnote \ref{footnote: f and g operators}), then the rules capturing a generic connective $h \in (\mathcal{F} \cap \mathcal{G})$ of arity $n = 1$ are as follows (notice that the notational convention $\tilde{h}$ conveys also the information that $h$ is both a left adjoint and a right adjoint):

\begin{itemize}		
	\item Display postulates for $h\in (\mathcal{F} \cap \mathcal{G})$ occurring in precedent and in succedent position:
\end{itemize}
\begin{itemize}
	\item[] If $\varepsilon_{h}(1) = 1$,
\end{itemize}
\begin{center}
	\begin{tabular}{@{}c@{}c@{}}
		\AX$\tilde{h}\, \Pi \fCenter \Sigma$
		\doubleLine
		\LL{\fns $\tilde{h} \vdash \check{h}^{\,\sharp}$}
		\UI$\Pi \fCenter \check{h}^{\,\sharp} \Sigma$
		\DP
		& 
		\AX$\Pi \fCenter \tilde{h}\, \Sigma$
		\doubleLine
		\RL{\fns $\hat{h}^{\,\flat} \vdash \tilde{h}$}
		\UI$\hat{h}^{\,\flat}\, \Pi \fCenter \Sigma$
		\DP \\
	\end{tabular}
\end{center}
\begin{itemize}
	\item[] If $\varepsilon_{h}(1) = \partial$,
\end{itemize}
\begin{center}
	\begin{tabular}{@{}c@{}c@{}}					
		\AX$\tilde{h}\, \Sigma_1 \fCenter \Sigma_2$
		\doubleLine
		\LL{\fns $(\hat{h}^{\,\sharp}, \tilde{h})$}
		\UI$\hat{h}^{\,\sharp}\, \Sigma_2 \fCenter \Sigma_1$
		\DP
		& 
		\AX$\Pi_1 \fCenter \tilde{h}\, \Pi_2$
		\doubleLine
		\RL{\fns $(\check{h}^{\,\flat}, \tilde{h})$}
		\UI$ \Pi_2 \fCenter \check{h}^{\,\flat}\, \Pi_1$
		\DP \\
	\end{tabular}
\end{center}

\begin{itemize}
	\item Structural rules for $h\in (\mathcal{F} \cap \mathcal{G})$:
\end{itemize}
%\begin{center}
%	\begin{tabular}{c}
%		\bottomAlignProof
%		\AxiomC{$\Big(\Upsilon_i \fCenter \Xi_i \quad \Xi_j \fCenter \Upsilon_j \mid 1\leq i, j\leq n_h, \varepsilon_{h}(i) = 1\mbox{ and } \varepsilon_{h}(j) = \partial\Big)$}
%		\RL{\fns$\tilde{h}$}
%		\UI$\tilde{h}\, (\Upsilon_1,\ldots, \Upsilon_{n_h}) \fCenter \tilde{h}\,(\Xi_1,\ldots, \Xi_{n_h})$
%		\DP
%	\end{tabular}
%\end{center}
%h(x_1,....h^?_j(x_1,..., y,... x_n),... x_n) |-- z / y|--z
%\begin{center}
%	\begin{tabular}{c}
		%\bottomAlignProof
		%\AX$\tilde{h} \, \hat{h}^{\,\flat}\, \Pi \fCenter \Sigma$
		%\LL{\fns$\tilde{h}, \hat{h}^{\,\flat}$}
		%\UI$\Pi \fCenter \Sigma$
		%\DP
		%& 
%		\bottomAlignProof
		%\AX$\tilde{h} \, \hat{h}^{\,\flat}\, \Pi \fCenter \Sigma$
%		\AX$\tilde{h} \, (\Upsilon_1, \ldots, \hat{h}_i^{\,\flat}\, (\Upsilon_1, \ldots, \Upsilon_i, \ldots, \Upsilon_n), \ldots, \Upsilon_n) \fCenter \Sigma $
%		\LL{\fns$\tilde{h}, \hat{h}^{\,\flat}$}
%		\UI$\Upsilon_i \fCenter \Sigma$
%		\DP
%		 \\
%	\end{tabular}
%\end{center}
\begin{itemize}
	\item[] If $\varepsilon_{h}(1) = 1$,
\end{itemize}
\begin{center}
	\begin{tabular}{cc}
		\AX$\Pi \fCenter \Sigma$
		\LL{\fns $\tilde{h}$}
		\UI$\tilde{h}\, \Pi \fCenter \tilde{h}\, \Sigma$
		\DP
		 & 
		\AX$\tilde{h}\, \hat{h}^{\,\flat}\, \Pi \fCenter \Sigma$
		\LL{\fns $(\tilde{h}, \hat{h}^{\,\flat})$}
		\UI$\Pi \fCenter \Sigma$
		\DP
		\\
	\end{tabular}
\end{center}
\begin{itemize}
	\item[] If $\varepsilon_{h}(1) = \partial$,
\end{itemize}
\begin{center}
	\begin{tabular}{cc}
		\AX$\Pi \fCenter \Sigma$
		\RL{\fns $\tilde{h}$}
		\UI$\tilde{h}\, \Sigma \fCenter \tilde{h}\, \Pi$
		\DP
		 & 
		\AX$\Pi \fCenter \tilde{h}\, \check{h}^{\,\sharp}\, \Sigma$
		\RL{\fns $(\tilde{h}, \check{h}^{\,\sharp})$}
		\UI$\Pi \fCenter \Sigma$
		\DP
		\\
	\end{tabular}
\end{center}

\begin{itemize}
	\item Logical introduction rules for $h\in (\mathcal{F} \cap \mathcal{G})$ occurring in precedent and in succedent position:
\end{itemize}
\begin{center}
	\begin{tabular}{cc}
		\bottomAlignProof
		\AX$\tilde{h}\, (\varphi_1,\ldots, \varphi_{n_h}) \fCenter \Sigma$
		\LL{\fns$h_L$}
		\UI$h(\varphi_1,\ldots, \varphi_{n_h}) \fCenter \Sigma$
		\DP
		&
		\bottomAlignProof
		\AX$\Pi \fCenter \tilde{h}\, (\varphi_1,\ldots, \varphi_{n_h})$
		\RL{\fns$h_R$}
		\UI$\Pi \fCenter h(\varphi_1,\ldots, \varphi_{n_h})$
		\DP
	\end{tabular}
\end{center}
\end{rem}}
%
% \item Introduction rules for additional connectives:
%  \begin{displaymath}
%\mathrm{(f_L)}\ \frac{\FH (\overline{\varphi})\Rightarrow y}{f(\overline{\varphi})\Rightarrow y} \quad \mathrm{(f_R)}\ \frac{\Big(x^{\epsilon_{f,i}} \Rightarrow \varphi_i \quad \varphi_j \Rightarrow x^{\epsilon_{f,j}}  \mid 1\leq i, j\leq n_f, \varepsilon_{f,i} = 1\mbox{ and } \varepsilon_{f,j} = \partial \Big)}{\FH(\obx)\Rightarrow f(\overline{\varphi})}
%\end{displaymath}
% \begin{displaymath}
%\mathrm{(g_L)}\ \frac{x \Rightarrow \GC(\overline{\psi})}{x \Rightarrow g(\overline{\psi})} \quad \mathrm{(g_R)}\ \frac{\Big( \psi_i \Rightarrow y^{\epsilon_{g,i}}  \quad y^{\epsilon_{g,j}}  \Rightarrow \psi_j \,\mid\, 1\leq i, j\leq n_g, \varepsilon_{g,i} = 1\mbox{ and } \varepsilon_{g,j} = \partial \Big)}{g(\overline{\psi}) \Rightarrow \GC (\ory)}	
%\end{displaymath}	
%Let $\mathrm{\cfDLE}$ denote the calculus obtained by removing Cut in $\mathrm{D.LE}$. 
%In what follows, we let $\vdash_{\mathrm{D.LE}} \varphi \vdash \psi$ indicate that the sequent $\varphi \vdash \psi$ is derivable in $\mathrm{D.LE}$. %(resp.~in $\mathrm{\cfDLE}$) 
 %(resp.~$\vdash_{\mathrm{\cfDLE}} \varphi \vdash \psi$).

%\marginnote{If needed, remember to define $D.LE^\ast$}

%\begin{thm}\label{th:soundness and completeness of D.LE}
	The calculus $\mathrm{D.LE}$ %(hence also $\mathrm{\cfDLE}$) 
    is sound and complete w.r.t.~the class of  complete  $\mathcal{L}$-algebras \cite{greco2018algebraic}.\footnote{\label{footnote:complete algebras} A {\em complete}  $\mathcal{L}$-algebra is a complete lattice endowed with $n_f$-ary (resp.~$n_g$-ary) operations $f$ (resp.~$g$) for each $f\in \mathcal{F}$ (resp.~$g\in \mathcal{G}$) which are completely join-preserving (resp.~meet-preserving) in any coordinate $i$ s.t.~$\varepsilon_f(i) = 1$ (resp.~$\varepsilon_g(i) = 1$), and completely meet-reversing (resp.~join-reversing) in any coordinate $i$ s.t.~$\varepsilon_f(i) = \partial$ (resp.~$\varepsilon_g(i) = \partial$). By well known order theoretic facts, complete algebras are also  {\em completely residuated}, that is, the residuals $f^\sharp_i$ and $g^\flat_h$ exist for any $f\in \mathcal{F}$ and $g\in\mathcal{G}$ in each coordinate.
}
%
%
%\begin{proof}
\remove{
	The soundness of the basic lattice rules is clear. The soundness of the remaining rules is due to the monotonicity (resp.~antitonicity) of the algebraic connectives interpreting each $f\in \mathcal{F}$ and $g\in \mathcal{G}$, and their adjunction/residuation properties, which hold since any complete $\mathcal{L}$-algebra is an $\mathcal{L}^*$-algebra.}
%\end{proof}
\frenchspacing
Moreover, it	is a proper display calculus (cf.~\cite[Theorem 26]{GMPTZ}), and hence cut elimination holds for it as a consequence of a Belnap-style cut elimination metatheorem (cf.~\cite[Section 2.2 and Appendix A]{GMPTZ} and \cite[Theorem 2]{GreLianMosPal2020}).	%\end{prop}

\remove{\subsection{The setting of distributive LE-logics}	
	
In this section we discuss how the general setting presented above can account for  the assumption that the given LE-logic is distributive, i.e.~that the {\em distributive laws} $(p\vee r)\wedge (p\vee q)\vdash p\vee (r\wedge q)$ and $ p\wedge (r\vee q) \vdash (p\wedge r)\vee (p\wedge q)$ are valid.	Such logics will be referred to as DLE-logics, since they are algebraically captured by varieties of {\em normal distributive lattice expansions} (DLEs), i.e.~LE-algebras as 
	%%%%%%%%%%%%%%%%%%%%%%%%%%%
 in Definition \ref{def:LE} such that $\mathbb{L}$ is assumed to be a \emph{bounded distributive lattice}. For any (D)LE-language, the basic %logic for distributive lattices 
 $\mathcal{L}_\mathrm{DLE}$-logic
 is defined as in Definition \ref{def:LE:logic:general} augmented with  the distributive laws above. %the axiom  $p\wedge (q\vee r)\vdash (p\wedge q)\vee (p\wedge r)$.

Since $\land$ and $\lor$ distribute over each other, besides being $\Delta$-adjoints, they can also be treated as elements of $\mathcal{F}$ and $\mathcal{G}$ respectively. In particular,  the binary connectives $\leftarrow$ and $\rightarrow$ occur  in the fully residuated language $\mathcal{L}_\mathrm{DLE}^*$, the intended interpretations of which are the right residuals of $\wedge$ in the first and second coordinate respectively, as well as the binary connectives $\pdla$ and $ \pdra$, the intended interpretations of which are the left residuals of $\vee$ in the first and second coordinate, respectively.
Following the general convention discussed in Section \ref{ssec:expanded language}, we stipulate that $\pdra, \pdla\in \mathcal{F}^*$ and $\rightarrow, \leftarrow \;  \in  \mathcal{G}^*$. The basic fully residuated $\mathcal{L}^\ast_\mathrm{DLE}$-{\em logic}, which will sometimes be referred to as the {\em basic bi-intuitionistic `tense'} DLE-logic, is given as per Definition \ref{def:tense lattice logic}. In particular, the residuation rules for the lattice connectives are specified as follows:\footnote{Notice that $\varphi\rightarrow \chi$ and $\chi\leftarrow \varphi$ are interderivable for any $\varphi$ and $\psi$, since $\wedge$ is commutative; similarly,  $\psi \pdra \varphi $ and $\varphi \pdla \psi $ are interderivable, since $\vee$ is commutative. Hence in what follows we consider explicitly only $\rightarrow$ and $\pdla$.}
%\begin{remark}
	%Note that ($\top, \bot$), ($\pand, \por$), ($\pdra, \pra$) and ($\pdla, \leftarrow$) are \emph{dual pairs}. Operators in a dual pair have the same arity and the same order-type. The pairs ($\pand, \pra$), ($\wedge, \leftarrow$), ($\pdra, \por$) and ($\pdla,\vee$) are \emph{residual pairs} as follows: $\pand \nvdash \ \pra$, $\pand \nvdash \ \leftarrow$, $\pdra \nvdash \por$, and $\pdla \nvdash \por$.
	%\end{remark}
		$$
			\begin{array}{cccc}
			\AX$\varphi\wedge\psi \fCenter \chi$
			\doubleLine
			\UI$\psi \fCenter \varphi\rightarrow \chi$
			\DP
			&
			\AX$\psi\wedge\varphi \fCenter \chi$
			\doubleLine
			\UI$\psi \fCenter \chi\leftarrow \varphi$
			\DP
			&
			\AX$\varphi \fCenter \psi\vee\chi$
			\doubleLine
			\UI$\psi \pdra \varphi \fCenter \chi$
			\DP
			&
			\AX$\varphi \fCenter \chi\vee\psi$
			\doubleLine
			\UI$\varphi \pdla \psi \fCenter \chi$
			\DP
			\end{array}
			$$

When interpreting LE-languages on perfect distributive lattice expansions (perfect DLEs, cf.~Footnote \ref{def:can:ext}), the logical disjunction is interpreted by means of the coordinatewise completely $\wedge$-preserving join operation of the lattice, and the logical conjunction with the coordinatewise completely $\vee$-preserving meet operation of the lattice. Hence we are justified in listing $+\wedge$ and $-\vee$ among the SLRs, and $+\vee$ and $-\wedge$ among the SRRs, as is done in Table \ref{Join:and:Meet:Friendly:Table:DLE}. Consequently, the classes of ({\em analytic}) {\em inductive $\mathcal{L}_\mathrm{DLE}$-inequalities} are obtained
	%, which we will call the \emph{distributive Sahlqvist} and \emph{distributive inductive} inequalities respectively,
	by simply applying Definitions \ref{def:good:branch} and \ref{Inducive:Ineq:Def} with respect to Table \ref{Join:and:Meet:Friendly:Table:DLE} below. %\footnote{In the analogous table given in \cite{UnifiedCor}, the nodes $+\wedge$ and $-\vee$ are listed among the $\Delta$-adjoints. These definitions both yield the same syntactic classes, but cater for different algorithms. In particular, the algorithm illustrated in \cite{UnifiedCor} is based on \cite{ALBAPaper} and uses the splitting rules (the soundness of which is based on $\Delta$-adjunction) as part of the approximation phase. In contrast, in the present paper, approximation is taken care of by different rules which pivot exclusively on join- and meet-preservation or reversal.}
	\begin{table}[h]
		\begin{center}
			\begin{tabular}{| c | c |}
				\hline
				Skeleton &PIA\\
				\hline
				$\Delta$-adjoints & SRA \\
				\begin{tabular}{ c c c c c c}
					$\phantom{\wedge}$ &$+$ &$\vee$ &$\phantom{\lhd}$ & &\\
					$\phantom{\vee}$ &$-$ &$\wedge$ \\
					
				\end{tabular}
				&
				\begin{tabular}{c c c c }
					$+$ &$\wedge$ &$g$ & with $n_g = 1$ \\
					$-$ &$\vee$ &$f$ & with $n_f = 1$ \\
		
				\end{tabular}
				\\			\hline
				SLR &SRR\\
				\begin{tabular}{c c c c }
					$+$ & $\wedge$  &$f$ & with $n_f \geq 1$\\
					$-$ & $\vee$ &$g$ & with $n_g \geq 1$ \\
				\end{tabular}
				&\begin{tabular}{c c c c}
					$+$ & $\vee$ &$g$ & with $n_g \geq 2$\\
					$-$ & $\wedge$  &$f$ & with $n_f \geq 2$\\
				\end{tabular}
				\\
				\hline
			\end{tabular}
		\end{center}
		\vspace{-1em}
		\caption{Skeleton and PIA nodes for $\mathcal{L}_\mathrm{DLE}$.}\label{Join:and:Meet:Friendly:Table:DLE}
	\end{table}

Precisely because, as reported in Table \ref{Join:and:Meet:Friendly:Table:DLE}, the nodes $+\land$ and $-\lor$ are now also SLR nodes, and $+\lor$ and $-\land$ are also SRR nodes (see also Remark \ref{remark:distr}),  the classes of (analytic) inductive $\mathcal{L}_\mathrm{DLE}$-inequalities are strictly larger than the classes of (analytic) inductive $\mathcal{L}_\mathrm{LE}$-inequalities in the same signature, as shown in the next example.
\begin{example}\label{Ex:associativity}
The inequality $\Diamond \Box( p \lor q) \le \Box \Diamond p\lor \Box \Diamond q$ is not an  inductive $\mathcal{L}_\mathrm{LE}$-inequality for any order-type, but it is an $\epsilon$-Sahlqvist $\mathcal{L}_\mathrm{DLE}$-inequality e.g.~for $\epsilon(p,q)=(\partial,\partial)$. The classification of nodes in the signed generation trees of $\Diamond \Box( p \lor q) \le \Box \Diamond p\lor \Box \Diamond q$ as an $\mathcal{L}_\mathrm{DLE}$-inequality is on the left-hand side of the picture below, and the one as an $\mathcal{L}_\mathrm{LE}$-inequality is on the right (see Notation \ref{notation: representations of signed generation trees}). In the classification on the right, no branch is good, therefore $\Diamond \Box( p \lor q) \le \Box \Diamond p\lor \Box \Diamond q$ is not an inductive $\mathcal{L}_\mathrm{LE}$-inequality for any order-type. 

\begin{center}
\begin{tabular}{c}
\begin{tikzpicture}
		\node at(-4.5,0){
			\begin{tikzpicture}
			\tikzstyle{level 1}=[level distance=1cm, sibling distance=2.5cm]
			\tikzstyle{level 2}=[level distance=1cm, sibling distance=2.5cm]
			\tikzstyle{level 3}=[level distance=1 cm, sibling distance=1.5cm]
			\node[Ske] at (-2,0) {$\begin{aligned} +\Diamond \end{aligned}$}
			child{node[PIA]{$\begin{aligned}+\Box\end{aligned}$}
			child{node[PIA]{$\begin{aligned}+\lor\end{aligned}$}
			child{node{$+p$}}
			child{node{$+q$}}
			}
			}          
			;
			\node at (0,0) {$\le$}; 
			
			\node[Ske] at (2,0) {$\begin{aligned} -\lor \end{aligned}$}
			child{node[Ske]{$\begin{aligned} -\Box \end{aligned}$}
			child{node[PIA]{$\begin{aligned} -\Diamond \end{aligned}$}
			child{node[draw]{$-p$}}
			}
			}
			child{node[Ske]{$\begin{aligned} -\Box \end{aligned}$}
			child{node[PIA]{$\begin{aligned} -\Diamond \end{aligned}$}
			child{node[draw]{$-q$}}
			}
			}
			;
			
			\node[rotate = +90] at (3.8, -2.5) {$\underbrace{\hspace{1.3cm}}$};
			\node at (4.2,-2.5) {$\textcolor{red}{\beta_q}$};
			
			\node[rotate = -90] at (0.2, -2.5) {$\underbrace{\hspace{1.3cm}}$};
			\node at (-0.15,-2.5) {$\textcolor{red}{\beta_p}$};
			
			\node[rotate = -90] at (-3.1, -2) {$\underbrace{\hspace{2.6cm}}$};
			\node at (-3.5,-2) {$\bgamma$};
			
			%\draw[help lines] (-4,-5) grid (6,6);
			%\node[rotate = +90] at (3.3, -2) {$\underbrace{\hspace{2.6cm}}$};
			%\node at (3.6,-2) {$\beta$};
		\end{tikzpicture}
	};
	\node at(+4.5,0){
	\begin{tikzpicture}
		\tikzstyle{level 1}=[level distance=1cm, sibling distance=2.5cm]
		\tikzstyle{level 2}=[level distance=1cm, sibling distance=2.5cm]
		\tikzstyle{level 3}=[level distance=1 cm, sibling distance=1.5cm]
		\node[Ske] at (-2,0) {$\begin{aligned} +\Diamond \end{aligned}$}
		child{node[PIA]{$\begin{aligned}+\Box\end{aligned}$}
			child{node[Ske]{$\begin{aligned}+\lor\end{aligned}$}
				child{node{$+p$}}
				child{node{$+q$}}
			}
		}          
		;
		\node at (0,0) {$\le$}; 
		
		\node[PIA] at (2,0) {$\begin{aligned} -\lor \end{aligned}$}
		child{node[Ske]{$\begin{aligned} -\Box \end{aligned}$}
			child{node[PIA]{$\begin{aligned} -\Diamond \end{aligned}$}
				child{node{$-p$}}
			}
		}
		child{node[Ske]{$\begin{aligned} -\Box \end{aligned}$}
			child{node[PIA]{$\begin{aligned} -\Diamond \end{aligned}$}
				child{node{$-q$}}
			}
		}
		;
		
		%\node[rotate = +90] at (4.3, -2.5) {$\underbrace{\hspace{1.3cm}}$};
		%\node at (4.7,-2.5) {$\textcolor{red}{\beta_q}$};
		
		%\node[rotate = -90] at (0.7, -2.5) {$\underbrace{\hspace{1.3cm}}$};
		%\node at (0.35,-2.5) {$\textcolor{red}{\beta_p}$};
		
		%\node[rotate = -90] at (-3.6, -2) {$\underbrace{\hspace{2.6cm}}$};
		%\node at (-4,-2) {$\bgamma$};
		
		%\draw[help lines] (-4,-5) grid (6,6);
		%\node[rotate = +90] at (3.3, -2) {$\underbrace{\hspace{2.6cm}}$};
		%\node at (3.6,-2) {$\beta$};
	\end{tikzpicture}
	};
\end{tikzpicture}
\end{tabular}
\end{center}

The inequality $p \land  (q \lor r )\le q \lor ( p\land r)$ is an $\epsilon$-Sahlqvist $\mathcal{L}_\mathrm{DLE}$-inequality e.g.~for $\epsilon(p,q, r)=(1, 1, 1)$, but is not an inductive $\mathcal{L}_\mathrm{LE}$-inequality for any order-type.  The classification of nodes in the signed generation trees of $p \land  (q \lor r )\le q \lor ( p\land r)$ as an $\mathcal{L}_\mathrm{DLE}$-inequality is represented on the left-hand side of the picture below (where the squared variable occurrences are $\epsilon$-critical; recall that Skeleton nodes are doubly circled, while PIA nodes are circled, cf.~Notation \ref{notation: representations of signed generation trees}), and the one as an $\mathcal{L}_\mathrm{LE}$-inequality is on the right.  In the classification on the right, no branch is good leading to occurrences of $r$, therefore $p \land  (q \lor r )\le q \lor ( p\land r)$ is not an inductive $\mathcal{L}_\mathrm{LE}$-inequality for any order-type.
\begin{center}
\begin{tabular}{c}
	\begin{tikzpicture}
		\node at(-4,0){
			\begin{tikzpicture}
			\tikzstyle{level 1}=[level distance=1cm, sibling distance=2.5cm]
			\tikzstyle{level 2}=[level distance=1cm, sibling distance=1.5cm]
			\tikzstyle{level 3}=[level distance=1 cm, sibling distance=1.5cm]
			\node[Ske] at (-2,0) {$\begin{aligned} + \land \end{aligned}$}
child{node[draw]{$+p$}}
            child{node[Ske]{$\begin{aligned} +\lor \end{aligned}$}
            child{node[draw]{$+q$}}
               child{node[draw]{$+r$}}
               }
			;
			\node at (0,0) {$\le$}; 
			
			\node[Ske] at (2,0) {$\begin{aligned} -\lor \end{aligned}$}
child{node{$-q$}}
            child{node[Ske]{$\begin{aligned} -\land \end{aligned}$}
            child{node{$-p$}}
               child{node{$-r$}}
               }
			;
			\end{tikzpicture}
		};

	\node at(+4,0){
			\begin{tikzpicture}
			\tikzstyle{level 1}=[level distance=1cm, sibling distance=2.5cm]
			\tikzstyle{level 2}=[level distance=1cm, sibling distance=1.5cm]
			\tikzstyle{level 3}=[level distance=1 cm, sibling distance=1.5cm]
			\node[PIA] at (-2,0) {$\begin{aligned} + \land \end{aligned}$}
child{node{$+p$}}
            child{node[Ske]{$\begin{aligned} +\lor \end{aligned}$}
            child{node{$+q$}}
               child{node{$+r$}}
               }
			;
			\node at (0,0) {$\le$}; 
			
			\node[PIA] at (2,0) {$\begin{aligned} -\lor \end{aligned}$}
child{node{$-q$}}
            child{node[Ske]{$\begin{aligned} -\land \end{aligned}$}
            child{node{$-p$}}
               child{node{$-r$}}
               }
			;
			\end{tikzpicture}
		};

	\end{tikzpicture}
%\caption{Representations of example \ref{Ex:associativity} accordingly to notation \ref{notation: representations of signed generation trees}.}
%\vspace{-1em}
\end{tabular}
\end{center}
\end{example}

\begin{comment}		
\subsubsection{Inductive and analytic inductive  $\mathcal{L}_{\mathrm{DLE}}$-inequalities/sequents}
			\begin{table}
						\begin{center}
							\begin{tabular}{| c | c |}
								\hline
								Skeleton &PIA\\
								\hline
								$\Delta$-adjoints & SRA \\
								\begin{tabular}{ c c c c c c}
									$+$ &$\vee$ &$\wedge$ &$\phantom{\lhd}$ &\quad \ \ &\qquad\\
									$-$ &$\wedge$ &$\vee$&\quad \ \ &\qquad\\
								\end{tabular}
								&
								\begin{tabular}{c c c c }
									$+$ &$\wedge$ &$g$ & with $n_g = 1$ \\
									$-$ &$\vee$ &$f$ & with $n_f = 1$ \\
								\end{tabular}
								%
								\\ \hline
								SLR &SRR\\
								\begin{tabular}{c c c c }
									$+$ & $\wedge$ &$f$ & with $n_f \geq 1$\\
									$-$ & $\vee$ &$g$ & with $n_g \geq 1$ \\
								\end{tabular}
								%
								&
								\begin{tabular}{c c c c}
									$+$ &$\vee$ &$g$ & with $n_g \geq 2$\\
									$-$ & $\wedge$ &$f$ & with $n_f \geq 2$\\
								\end{tabular}
								\\
								\hline
							\end{tabular}
						\end{center}
						\caption{Skeleton and PIA nodes for $\mathrm{DLE}$.}\label{Join:and:Meet:Friendly:Table}
						\vspace{-1em}
					\end{table}
\end{comment}	
Also, definite  Skeleton and definite PIA $\mathcal{L}_{\mathrm{DLE}}$-formulas are defined verbatim in the same way as in the setting of $\mathcal{L}_{\mathrm{LE}}$-formulas. Namely, $\ast \xi$ (resp.~$\ast \varphi$) is a definite Skeleton (resp.~definite PIA) if and only  all nodes of $\ast \xi$ (resp.~$\ast \varphi$) are SLR (resp.~SRR).	However, the classification of nodes we need to consider is now the one of Table  \ref{Join:and:Meet:Friendly:Table:DLE}, where $+\wedge$ and $-\vee$ are also SLR-nodes, and $+\vee$ and $-\wedge$ are also SRR-nodes.			
Definition \ref{def: RA and LA} is specified for $\land$, $\lor$, $\rightarrow$ and $\pdla$ as follows:
			%\marginnote{AP:I think here it makes sense to use $\alpha$ (with subscripts) to denote the positive PIA formulas and $\beta$ (with subscripts) to denote the negative PIA formulas}
			%\marginnote{AP: please remove colors}
			\begin{center}
				\begin{tabular}{r c l}
					%$\mathsf{LA}^{\varepsilon}(\top)$ &= &$\bot$ \\
				%	$\mathsf{LA}(x)$ &= &$u$;\\
					%$\mathsf{LA}^{\varepsilon}(x_j)$ &= &$\bot^{\varepsilon_j}$ when $i \neq j$;\\
					%$\mathsf{LA}(\Box \varphi(x, \oz))$ &= &$\mathsf{LA}(\varphi)(\Diamondblack u, \overline{z})$;\\
					%$\mathsf{LA}(\varphi_1(x, \oz) \wedge \varphi_2(\oz))$ &= &$\mathsf{LA}(\varphi_1)(u, \overline{z})$;\\
					$\mathsf{la}(\xi(\oz) \rightarrow \psi(x, \oz))$ &= &$\mathsf{la}(\psi)(u \wedge \xi(\oz), \oz)$;\\
					$\mathsf{la}(\psi_1(\oz) \vee \psi_2(x, \oz))$ &= &$\mathsf{la}(\psi_2)(u \pdla \psi_1(\oz), \oz)$;\\
					$\mathsf{la}(\xi(x, \oz) \rightarrow \psi(\oz))$ &= &$\mathsf{ra}(\xi)(u \rightarrow \psi(\oz), \oz)$;\\
					%$\mathsf{LA}(g(\overrightarrow{\varphi_{-j}(\oz)},\varphi_j(x,\oz), \overrightarrow{\psi(\oz)}))$ &= %&$\mathsf{LA}(\varphi_j)(g^{\flat}_{j}(\overrightarrow{\varphi_{-j}(\oz)},u, \overrightarrow{\psi(\oz)} ), \oz)$;\\
					%$\mathsf{LA}(g(\overrightarrow{\varphi(\oz)}, \overrightarrow{\psi_{-j}(\oz)},\psi_j(x,\oz)))$ &= %&$\mathsf{RA}(\psi_j)(g^{\flat}_{j}(\overrightarrow{\varphi(\oz)}, \overrightarrow{\psi_{-j}(\oz)},u), \oz)$;\\
					&&\\
					%$\mathsf{RA}^{\varepsilon}(\bot)$ &= &$\top$ \\
					%$\mathsf{RA}(x)$ &= &$u$;\\
					%$\mathsf{RA}^{\varepsilon}(x_j)$ &= &$\top^{\varepsilon_j}$ when $i \neq j$;\\
					%$\mathsf{RA}(\Diamond \psi(x, \oz))$ &= &$\mathsf{RA}(\psi)(\blacksquare u, \overline{z})$;\\
					%$\mathsf{RA}(\psi_1(x, \oz) \vee \psi_2(\oz))$ &= &$\mathsf{RA}(\psi_1)(u, \overline{z})$;\\
					$\mathsf{ra}(\xi(x, \oz) \pdla \psi(\oz))$ &= &$\mathsf{ra}(\xi)(\psi(\oz) \vee u, \oz)$;\\
					$\mathsf{ra}(\xi_1(\oz) \wedge \xi_2(x, \oz))$ &= &$\mathsf{ra}(\xi_2)(\xi_1(\oz) \rightarrow u, \oz)$;\\
					$\mathsf{ra}(\xi(\oz) \pdla \psi(x, \oz))$ &= &$\mathsf{la}(\psi)(\xi(\oz) \pdla u, \oz)$;\\
					%$\mathsf{RA}(f(\overrightarrow{\psi_{-j}(\oz)},\psi_j(x,\oz), \overrightarrow{\varphi(\oz)}))$ &= &$\mathsf{RA}(\psi_j)(f^{\sharp}_{j}(\overrightarrow{\psi_{-j}(\oz)},u, \overrightarrow{\varphi(\oz)} ), \oz)$;\\
					%$\mathsf{RA}(f(\overrightarrow{\psi(\oz)}, \overrightarrow{\varphi_{-j}(\oz)},\varphi_j(x,\oz)))$ &= &$\mathsf{LA}(\varphi_j)(f^{\sharp}_{j}(\overrightarrow{\psi(\oz)}, \overrightarrow{\varphi_{-j}(\oz)},u), \oz)$.\\
				\end{tabular}
			\end{center} %\marginnote{AP: put all operational arrows in round brackets}
Finally, as to the display calculus $\mathrm{D.DLE}$ for the basic $\mathcal{L}_{\mathrm{DLE}}$-logic,  its language is obtained by augmenting the language of $\mathrm{D.LE}$ with the following structural symbols for the lattice operators and their residuals:\footnote{In the presence of the exchange rules $E_L$ and $E_R$, the structural connectives $\ADRARR, \ALARR$ and the corresponding operational connectives $\adrarr, \alarr$ are redundant. For simplicity, we consider languages and calculi where the operational connectives $\adrarr$ and $\alarr$ and their introduction rules are not included.}
						
%\marginnote{Now we have a one-one correspondence, but it would be useful to mention that we overloaded the notation in some previous paper. Moreover, we should mention that $\hat{\cdot}$ is associated to left-adjoints/residuals and $\check{\cdot}$ to right adjoint/residuals. G: we mention that it is possible to overload the notation in Remark 11. I also added a sentence on page 9 (immediately after the definition of the structural language) about the notational conventions $\hat{\cdot}$ and $\check{\cdot}$.}
			\begin{center}
				\begin{tabular}{|r|c|c|c|c|c|c|c|c|}
					\hline
					\scriptsize{Structural symbols} & $\ATOP$ & $\ABOT$ & $\AAND$ & $\AOR$ & $\ADRARR$ & $\ARARR$ & $\ADLARR$ & $\ALARR$ \\
					\hline
					\scriptsize{Operational symbols} & $\top$ & $\bot$ & $\pand$ & $\por$ & $(\pdra)$ & $(\pra)$ & $(\pdla)$ & $(\leftarrow)$\\
					\hline
				\end{tabular}
			\end{center}			

		%A {\em structure} is an expression built from $\mathcal{L}_\mathrm{DLE}^*$-terms via structural operators. We use capital letters $X, Y, Z$ to denote structures. A sequent is of the form $X\vdash Y$ where $X,Y$ are structures. $X$ is called {\em antecedent}, and $Y$ the {\em succedent}.
		
\noindent Display postulates for lattice connectives and their residuals are specified as follows:
				\begin{center}
					\begin{tabular}{c}
						\AX$\Pi_1 \AAND \Pi_2 \fCenter \Sigma$
						\LL{\fns$\hat{\wedge} \nvdash \check{\ararr}$}
						\doubleLine
						\UI$\Pi_2 \fCenter \Pi_1 \ARARR \Sigma$
						\DP
						\ \ 
						\AX$\Pi_1 \AAND \Pi_2 \fCenter \Sigma$
						\LL{\fns$\hat{\wedge} \nvdash \check{\alarr}$}
						\doubleLine
						\UI$\Pi_1 \fCenter \Sigma \ALARR \Pi_2$
						\DP
						\ \ 
						\AX$\Pi \fCenter \Sigma_1 \AOR \Sigma_2$
						\RL{\fns$\hat{\adrarr} \nvdash \check{\aor}$}
						\doubleLine
						\UI$\Sigma_1 \ADRARR \Pi \fCenter \Sigma_2$
						\DP
						 \ \ 
						\AX$\Pi \fCenter \Sigma_1 \AOR \Sigma_2$
						\RL{\fns$\hat{\adlarr} \nvdash \check{\aor}$}
						\doubleLine
						\UI$\Pi \ADLARR \Sigma_2 \fCenter \Sigma_1$
						\DP
					\end{tabular}
				\end{center}

%%%
\noindent Moreover, $\mathrm{D.DLE}$ is augmented with the following structural rules encoding the characterizing properties of the  lattice connectives:
				\begin{center}
					\begin{tabular}{rlcrl}
						\AX$\Pi \fCenter \Sigma$
						\doubleLine
						\LL{\fns$\ATOP_{L}$}
						\UI$\ATOP \AAND \Pi \fCenter Y$
						\DP
						&
						\AX$\Pi \fCenter \Sigma$
						\doubleLine
						\RL{\fns$\ABOT_{R}$}
						\UI$\Pi \fCenter \Sigma \AOR \ABOT$
						\DP
						& &
						\AX$\Pi_1 \AAND \Pi_2 \fCenter \Sigma$
						\LL{\fns$E_L$}
						\UI$\Pi_2 \AAND \Pi_1 \fCenter \Sigma$
						\DP
						&
						\AX$\Pi \fCenter \Sigma_1 \AOR \Sigma_2$
						\RL{\fns$E_R$}
						\UI$\Pi \fCenter \Sigma_2 \AOR \Sigma_1$
						\DP
						\\
						&\\
						\AX$\Pi_2 \fCenter \Sigma$
						\LL{\fns$W_L$}
						\UI$\Pi_1 \AAND \Pi_2 \fCenter \Sigma$
						\DP
						&
						\AX$\Pi \fCenter \Sigma_1$
						\RL{\fns$W_R$}
						\UI$\Pi \fCenter \Sigma_1 \AOR \Sigma_2$
						\DP
						& &
						\AX$\Pi \AAND \Pi \fCenter \Sigma$
						\LL{\fns$C_L$}
						\UI$\Pi \fCenter \Sigma$
						\DP
						&
						\AX$\Pi \fCenter \Sigma \AOR \Sigma$
						\RL{\fns$C_R$}
						\UI$\Pi \fCenter \Sigma$
						\DP
						\\
						&\\
						\mc{2}{c}{
							\AX$\Pi_1 \AAND (\Pi_2 \AAND \Pi_3) \fCenter \Sigma$
							\doubleLine
							\LL{\fns$A_{L}$}
							\UI$(\Pi_1 \AAND \Pi_2) \AAND \Pi_3 \fCenter \Sigma$
							\DP}
						& &
						\mc{2}{c}{
							\AX$\Pi \fCenter (\Sigma_1 \AOR \Sigma_2) \AOR \Sigma_3$
							\doubleLine
							\RL{\fns$A_{R}$}
							\UI$\Pi \fCenter \Sigma_1 \AOR (\Sigma_2 \AOR \Sigma_3)$
							\DP}
					\end{tabular}
				\end{center}
and the introduction rules for the lattice connectives (and their residuals) follow the same pattern as the introduction rules of any $f\in \mathcal{F}$ and $g\in \mathcal{G}$:
		
				{
				\begin{center}
					\begin{tabular}{@{}rl | rl@{}}
						\AXC{\phantom{$\abot \fCenter \ABOT$}}
						\LL{\fns$\abot_L$}
						\UI$\abot \fCenter \ABOT$
						\DP
						&
						\AX$\Pi \fCenter \ABOT$
						\RL{\fns$\abot_R$}
						\UI$\Pi \fCenter \abot$
						\DP
						&
						\AX$\ATOP \fCenter \Sigma$
						\LL{\fns$\aatop_L$}
						\UI$\aatop \fCenter \Sigma$
						\DP
						&
						\AXC{\phantom{$\ATOP \fCenter \aatop$}}
						\RL{\fns$\aatop_R$}
						\UI$\ATOP \fCenter \top$
						\DP
						\\
						& & & \\
						\AX$\varphi \AAND \psi \fCenter \Sigma$
						\LL{\fns$\pand_L$}
						\UI$\varphi \pand \psi \fCenter \Sigma$
						\DP
						&
						\AX$\Pi_1 \fCenter \varphi$
						\AX$\Pi_2 \fCenter \psi$
						\RL{\fns$\pand_R$}
						\BI$\Pi_1 \AAND \Pi_2 \fCenter \varphi \pand \psi$
						\DP
						\ \ &\ \ 
						\AX$\varphi \fCenter \Sigma_1$
						\AX$\psi \fCenter \Sigma_2$
						\LL{\fns$\por_L$}
						\BI$\varphi \por \psi \fCenter \Sigma_1 \AOR \Sigma_2$
						\DP
						&
						\AX$\Pi \fCenter \varphi \AOR \psi$
						\RL{\fns$\por_R$}
						\UI$\Pi \fCenter \varphi \por \psi$
						\DP
						\\
						%& \\
						%\AX$\Pi \fCenter \varphi$
						%\AX$\psi \fCenter \Sigma$
						%\LL{\fns$\pra_L$}
						%\BI$\varphi \ararr \psi \fCenter \Pi \ARARR \Sigma$
						%\DP
						%&
						%\AX$\Pi \fCenter \varphi \ARARR \psi$
						%\RL{\fns$\ararr_R$}
						%\UI$\Pi \fCenter \varphi \ararr \psi$
						%\DP
						%&
						%\AX$\varphi \ADRARR \psi \fCenter \Sigma$
						%\LL{\fns$(\adrarr_L)$}
						%\UI$\varphi \adrarr \psi \fCenter \Sigma$
						%\DP
						%&
						%\AX$\varphi \fCenter \Sigma$
						%\AX$\Pi \fCenter \psi$
						%\RL{\fns$(\adrarr_R)$}
						%\BI$\Sigma \ADRARR \Pi \fCenter \varphi \adrarr \psi$
						%\DP
						%\\
						%& \\
						%\AX$\psi \fCenter \Sigma$
						%\AX$\Pi \fCenter \varphi$
						%\LL{\fns$(\alarr_L)$}
						%\BI$\psi \alarr \varphi \fCenter \Sigma \ALARR \Pi$
						%\DP
						%&
						%\AX$\Pi \fCenter \varphi \ALARR \psi$
						%\RL{\fns$(\alarr_R)$}
						%\UI$\Pi \fCenter \varphi \alarr \psi$
						%\DP
						%&
						%\AX$\psi \ADLARR \varphi \fCenter \Sigma$
						%\LL{\fns$\adlarr_L$}
						%\UI$\psi \adlarr \varphi \fCenter \Sigma$
						%\DP
						%&
						%\AX$\Pi \fCenter \psi$
						%\AX$\varphi \fCenter \Sigma$
						%\RL{\fns$\adlarr_R$}
						%\BI$\Pi \ADLARR \Sigma \fCenter \psi \adlarr \varphi$
						%\DP
						%\\
					\end{tabular}
				\end{center}
				 }
			
\begin{rem}	 \label{rem: derivable rules}
Rules $\aand_{L1}$, $\aand_{L2}$, $\aand_{L}$, $\aor_{R1}$, $\aor_{R2}$ and $\aor_{R}$ in  $\mathrm{D.LE^\ast}$ are derivable in $\mathrm{D.DLE}$ as follows:
\begin{center}
\begin{tabular}{ccc}
{
$\aand_{L2}$:
\begin{tabular}{c}
\AX$\psi \fCenter \Sigma$
\LL{\fns$W_L$}
\UI$\varphi \AAND \psi \fCenter \Sigma$
\LL{\fns$\aand_L$}
\UI$\varphi \aand \psi \fCenter \Sigma$
\DP 
 \\
\end{tabular}
}
&
{
$\aand_{L1}$:
\begin{tabular}{c}
\AX$\varphi \fCenter \Sigma$
\LL{\fns$W_L$}
\UI$ \psi \AAND \varphi \fCenter \Sigma$
\LL{\fns$E_L$}
\UI$ \varphi \AAND \psi \fCenter \Sigma$
\LL{\fns$\aand_L$}
\UI$\varphi \aand \psi \fCenter \Sigma$
\DP 
 \\
\end{tabular}
}
 & 
{
$\aand_{R}$:
\begin{tabular}{c}
\AX$\Pi \fCenter \varphi$
\AX$\Pi \fCenter \psi$
\RL{\fns$\aand_R$}
\BI$\Pi \AAND \Pi \fCenter \varphi \aand \psi$
\LL{\fns$C_L$}
\UI$\Pi \fCenter \varphi \aand \psi$
\DP 
 \\
\end{tabular}
}
\\

{
$\aor_{L}$:
\begin{tabular}{c}
\AX$\varphi \fCenter \Sigma$
\AX$ \psi \fCenter \Sigma$
\LL{\fns$\aor_L$}
\BI$\varphi \aor \psi \fCenter \Sigma \AOR \Sigma$
\RL{\fns$C_R$}
\UI$\varphi \aor \psi \fCenter \Sigma$
\DP 
 \\
\end{tabular}
}
 & 
{
$\aor_{R1}$:
\begin{tabular}{c}
\AX$\Pi \fCenter \varphi$
\RL{\fns$W_R$}
\UI$\Pi \fCenter \varphi \AOR \psi$
\RL{\fns$\aor_R$}
\UI$\Pi \fCenter \varphi \aor \psi$
\DP 
 \\
\end{tabular}
}
 & 
{
$\aor_{R2}$:
\begin{tabular}{c}
\AX$\Pi \fCenter \psi$
\RL{\fns$W_R$}
\UI$\Pi \fCenter \psi \AOR \varphi$
\RL{\fns$E_R$}
\UI$\Pi \fCenter \varphi \AOR\psi $
\RL{\fns$\aor_R$}
\UI$\Pi \fCenter \varphi \aor \psi $
\DP 
 \\
\end{tabular}
}
\end{tabular}
\end{center}
\end{rem}

\begin{remark}\label{remark:distr}
In what follows, we work within the non-distributive framework of the calculus $\mathrm{D.LE}$ and its extensions. Nevertheless, all results concerning derivations in $\mathrm{D.LE}$ carry over directly to $\mathrm{D.DLE}$ via the following procedure: every application of the rules $\aand_{L1}$, $\aand_{L2}$, $\aand_{R}$, $\aor_{R1}$, $\aor_{R2}$, and $\aor_{L}$ is replaced with their respective derivations in $\mathrm{D.DLE}$ (see Remark~\ref{rem: derivable rules}). 
	
	All occurrences of $\land$ (resp.~$\lor$) in an inductive $\mathcal{L}_{\mathrm{DLE}}$-inequality which are classified as SLR (resp.~SRR)  will be treated as  connectives in $\mathcal{F}$ (resp.~$\mathcal{G}$). 
	\end{remark} }

\section{Refutation display calculi for LE-logics}\label{sec:refutational}

\remove{\begin{definition}\label{def:modal_trace}
    Given a propositional atomic variable $p$ in a sequent $A\vdash B$, we define its \emph{modal trace} $\mathrm{mtr}(p)$ to be the ordered list of the modal connectives containing $p$, from the leaf $p$ to the root of the sequent, together with their polarity and the coordinate of the branch leading to $p$.
    %We write $-\mathrm{mtr}(p)$ to denote the list $\mathrm{mtr}(p)$ where all the entries have reversed polarity. 
    The modal trace of a logical connective is defined similarly.
\end{definition}

\begin{example}
    Consider the sequent $\Diamond(\Box q \rightarrow r)\vdash\Box\Diamond p \rightarrow \Box r$. We have:
    \begin{itemize}
        \item $\mathrm{mtr}(p) = [(\Diamond, \partial,1),(\Box,\partial,1),(\rightarrow,1,1)]$,
        %\item $-\mathrm{mtr}(p) = [(\Diamond, 1,1),(\Box,1,1),(\rightarrow,\partial,1)]$,
        \item $\mathrm{mtr}(q)=[(\Box,1,1),(\rightarrow,\partial,1),(\Diamond,\partial,1)]$,
        \item $\mathrm{mtr}()$
    \end{itemize}
\end{example}}

%\marginnote{AP: bisogna aggiungere la def di positive/negative polarity. Nota che qui positive e negative sono al contrario di come le definiamo in alba. spiegare in un remark che questa e' simile alla definite perche' vogliamo che non ci siano delta adjoints nello skeleton delle formule appese alla struttura col segno che ereditano e portare su nealla sezione 3?\textcolor{red}{ADD: decidete voi il modo per gestire questa definizione}}
\begin{definition} \label{def:branching}
A sequent $\Pi\vdash\Sigma$ is \emph{branching}\footnote{The definition of branching sequents captures    {\em non-definite} inequalities in \cite[Definition 3.2]{CoPa11}; the  difference is that, here, both structural and operational connectives are considered. Hence, every branching sequent is equivalent to a set of non-branching ones (cf.~\cite[Lemma 8.2]{CoPa11}).} if the  signed generation trees\footnote{In the context of a sequent $\Pi\vdash \Sigma$,  the  {\em signed generation trees} of $\Pi$ and $\Sigma$ are defined by assigning $+$ (resp.~$-$) to the root of the generation tree of $\Pi$ (resp.~$\Sigma$), and then propagating the sign according to the order-type of both structural and operational connectives in the generation trees of $\Pi$ and $\Sigma$ and of the formulas therein: a child-node corresponding to a coordinate of the parent node of order-type $1$ (resp.~$\partial$) inherits the same sign as (resp.~opposite sign to) the sign of the parent node. 
    } of  $\Pi$ and $\Sigma$  contain some  $+\vee$ or  $-\wedge$ node such that all nodes (if any) connecting it to the root are labelled in the set $\{+ f, +\FH, -g, -\GC\mid f\in\mathcal{F}, g\in\mathcal{G}\}$. %branch connecting it to  which occurs in the scope of (possibly, $0$) $\mathcal{F}$ operators of positive (resp.~negative) polarity and (possibly, $0$) $\mathcal{G}$ operators of negative (resp.~positive) polarity.
\end{definition}

The refutation calculus D.LE$^r$ manipulates {\em antisequents} -- i.e., syntactic objects of the form $\Pi\nvdash \Sigma$, where $\Pi\in \mathsf{Str}_\mathcal{F}^{\epsilon}$ and $\Sigma\in \mathsf{Str}_\mathcal{G}^{\epsilon}$. Intuitively, an antisequent $\Pi\nvdash \Sigma$ is derivable if and only if the sequent $\Pi\vdash\Sigma$ is invalid.\footnote{In writing antisequents, we employ $\nvdash$ rather than the standard $\dashv$ to avoid a clash of notation with the symbol denoting adjunction.} In what follows, we list the rules of D.LE$^r$ using the same notational conventions introduced in the previous section, with the addition that $\varphi\nvdash^\partial \psi$ stands for $\psi \nvdash \varphi$, and $\check{\bot}^\partial = \hat{\top}$ and $\hat{\top}^\partial = \check{\bot}$.
\begin{itemize}
    \item Axiomatic rules:
\end{itemize}
    \begin{center}
    \begin{tabular}{c cc c}
        {\AxiomC{}
         \LL{\scriptsize{Ax1}}
        \UnaryInfC{$\AATOP \nvdash \ABOT$}
                \DisplayProof}\qquad
                {\AxiomC{}
         \LL{\scriptsize{Ax2}}
        \UnaryInfC{$p \nvdash \ABOT$}
        \DisplayProof}\qquad
        {\AxiomC{}
         \RL{\scriptsize{Ax3}}
        \UnaryInfC{$\AATOP \nvdash q$}
        \DisplayProof}\qquad
        {\AxiomC{}
         \RL{\scriptsize{Ax4}}
        \UnaryInfC{$p \nvdash q$}
        \DisplayProof}
	\end{tabular}
\end{center}
\begin{itemize}
\item Display rules for $f\in \mathcal{F}$ and $g\in \mathcal{G}$: for any $1\leq i \leq n_f$ and $1\leq j \leq n_g$,
\end{itemize}

\begin{itemize}
	\item[] if $\varepsilon_{f}(i) = 1$ and $\varepsilon_{g}(j) = 1$,%\footnote{The notation $\FH \nvdash \FCS_i$ (resp.~$\GHF_j \nvdash \GC$) indicates that $\FH$ and $\FCS_i$ (resp.~$\GHF_j$ and $\GC$) are in a {\em residuated pair} and $\FCS_i$ (resp.~$\GHF_j$) is the right residual (resp.~left residual) of $\FH$ (resp.~$\GC$) in the $i$-th coordinate (resp.~$j$-th coordinate).}
\end{itemize}
\begin{center}
	\begin{tabular}{lr}
		\AXC{$\Pi \nvdash \GC\, (\Upsilon_1 \ldots, \Sigma_j, \ldots \Upsilon_{n_g})$}
		\doubleLine
		\LL{\fns $\GHF_j \dashv \GC$}
		\UIC{$\GHF_j\, (\Upsilon_1, \ldots, \Pi, \ldots, \Upsilon_{n_g}) \nvdash \Sigma_i$}
		\DP 
        &
        \AXC{$\FH\, (\Upsilon_1, \ldots, \Pi_i, \ldots, \Upsilon_{n_f}) \nvdash \Sigma$}
		\doubleLine
		\RL{\fns $\FH \dashv \FCS_i$}
		\UIC{$\Pi_i \nvdash \FCS_i\, (\Upsilon_i, \ldots, \Sigma, \ldots, \Upsilon_{n_f})$}
		\DP \\
	\end{tabular}
\end{center}
\begin{itemize}
	\item[] if $\varepsilon_{f}(i) = \partial$ and $\varepsilon_{g}(j) = \partial$, %\footnote{The notation $(\GC, \GCF_j)$ (resp.~$(\FH, \FHS_i)$) indicates that $\GC$ and $\GCF_j$ (resp.~$\FH$ and $\FHS_i$) are in a {\em Galois connection} (resp.~{\em dual Galois connection}) and $\GCF_j$ (resp.~$\FHS_i$) is the right residual (resp.~left residual) of $\GC$ (resp.~$\FH$) in the $j$-th coordinate (resp.~$i$-th coordinate).}
\end{itemize}
\begin{center}
	\begin{tabular}{lr}		
        \AXC{$\FH\, (\Upsilon_1, \ldots, \Sigma_i, \ldots, \Upsilon_{n_f}) \nvdash \Sigma$}
        \LL{}
        \doubleLine
        \LL{\fns $(\FH, \FHS_i)$}
        \UIC{$\FHS_i\, (\Upsilon_1, \ldots, \Sigma, \ldots, \Upsilon_{n_f}) \nvdash \Sigma_i$}
        \DP &
		\AXC{$\Pi \nvdash \GC\, (\Upsilon_1, \ldots, \Pi_j, \ldots, \Upsilon_{n_g})$}
        \doubleLine
		\RL{\fns $(\GC, \GCF_j)$}
        \UIC{$ \Pi_j \nvdash \GCF_j\, (\Upsilon_1, \ldots, \Pi, \ldots, \Upsilon_{n_g})$}
        \DP
	\end{tabular}
\end{center}
\begin{itemize}
\item Structural rules$^\ast$ for $f\in\mathcal{F}$ and $g\in\mathcal{G}$: 
\end{itemize}
\begin{center}
	\begin{tabular}{c c}
    \AXC{$(\Upsilon_i \nvdash^{\epsilon_f(i)} \ABOT^{\epsilon_f(i)} \mid 1\leq i\leq n_f) $}
    \LL{\fns$\FH\ABOT$}
    \UIC{$\FH\, (\Upsilon_1,\ldots, \Upsilon_{n_f}) \nvdash\ABOT$}
    \DP &
    \AXC{$( \AATOP^{\epsilon_g(i)}\nvdash^{\epsilon_g(i)} \Upsilon_i \mid 1\leq i\leq n_g ) $}
    \RL{\fns$\ATOP \GC$}
    \UIC{$\AATOP\nvdash\GC\, (\Upsilon_1,\ldots, \Upsilon_{n_g})$}
    \DP
	\end{tabular}
\end{center}

%\textcolor{red}{GG: It is worth observing that the rule above $\ATOP \GC$ (resp.~$\FH\ABOT$) makes necessitation (resp.~co-necessitation) derivable: just take $n_g=1$ (resp.~$n_f=1$).}

\begin{center}
	\begin{tabular}{c c}
    \AXC{$(\Upsilon_i \nvdash^{\epsilon_f(i)} \ABOT^{\epsilon_f(i)} \mid 1\leq i\leq n_f) $}
    \LL{\fns$\FH p$}
    \UIC{$\FH\, (\Upsilon_1,\ldots, \Upsilon_{n_f}) \nvdash p$}
    \DP &
    \AXC{$( \AATOP^{\epsilon_g(i)}\nvdash^{\epsilon_g(i)} \Upsilon_i \mid 1\leq i\leq n_g ) $}
    \RL{\fns$p\GC$}
    \UIC{$p \nvdash\GC\, (\Upsilon_1,\ldots, \Upsilon_{n_g})$}
    \DP
	\end{tabular}
\end{center}

\begin{center}
    \AXC{$(  \Upsilon_i \nvdash^{\epsilon_f(i)} \ABOT^{\epsilon_f(i)} \mid 1\leq i\leq n_f ) $}
    \AXC{$( \AATOP^{\epsilon_g(i)}\nvdash^{\epsilon_g(i)} \Upsilon'_i \mid 1\leq i\leq n_g ) $}
    \RL{\fns$\FH\GC$}
    \BIC{$\FH(\Upsilon_{1},\ldots,\Upsilon_{n_f})\nvdash\GC\, (\Upsilon'_1,\ldots, \Upsilon'_{n_g})$}
    \DP
\end{center}

$^\ast$Side condition: in each rule above, the endsequent should not contain residuals.
\begin{itemize}
\item Logical introduction rules$^{\ast\ast}$ for $f\in\mathcal{F}$ and $g\in\mathcal{G}$:
\end{itemize}
\begin{center}
	\begin{tabular}{c c c c}
    \AXC{}
    \LL{\fns$g\ABOT$}
    \UIC{$g(\overline{\varphi}) \nvdash \ABOT$}
    \DP &
    \AXC{}
    \LL{\fns$g p$}
    \UIC{$g(\overline{\varphi}) \nvdash p$}
    \DP &
    \AXC{}
    \RL{\fns$p f$}
    \UIC{$p\nvdash f(\overline{\varphi})$}
    \DP &
    \AXC{}
    \RL{\fns$\AATOP f$}
    \UIC{$\AATOP\nvdash f(\overline{\varphi})$}
    \DP
	\end{tabular}
\end{center}

\begin{center}
	\begin{tabular}{c c c}
    \AXC{$\FH\, (\overline{\varphi}) \nvdash\Sigma$}
    \LL{\fns$f_L$}
    \UIC{$f(\overline{\varphi}) \nvdash \Sigma$}
    \DP &
    \AXC{\phantom{$\FH\, (\overline{\varphi}) \nvdash\Sigma$}}
    \RL{\fns$gf$}
    \UIC{$g(\overline{\psi}) \nvdash f(\overline{\varphi})$}
    \DP &
    \AXC{$\Pi \nvdash \GC\, (\overline{\varphi})$}
    \RL{\fns$g_R$}
    \UIC{$\Pi \nvdash g(\overline{\varphi})$}
    \DP
	\end{tabular}
\end{center}

\begin{center}
    \AXC{$(\Upsilon_i \nvdash^{\epsilon_f(i)} \ABOT^{\epsilon_f(i)} \mid 1\leq i\leq n_f )$}
    \AXC{$\Upsilon_j \nvdash^{\epsilon_f(j)} \varphi_j$}
    \RL{\fns$f_R$}
    \BIC{$\FH\, (\Upsilon_1,\ldots, \Upsilon_{n_f}) \nvdash f(\varphi_1,\ldots,\varphi_{n_f})$}
    \DP 
\end{center}

\remove{
\textcolor{red}{GG: In the rule $f_R$ above the arguments of $\FH$, namely $\Upsilon_1,\ldots, \Upsilon_{n_f}$ are copied twice in the premises, vice versa the arguments of $f$, namely $\varphi_1,\ldots,\varphi_{n_f}$, are copied only once in the premises... so, this rule is not ``symmetric''. What about the following rule, where also $\varphi_1,\ldots,\varphi_{n_f}$ are copied twice? Is it sound?
\begin{center}
    \AXC{$(\Upsilon_i \nvdash^{\epsilon_f(i)} \ABOT^{\epsilon_f(i)} \mid 1\leq i\leq n_f )$}
    \AXC{$\Upsilon_j \nvdash^{\epsilon_f(j)} \varphi_j$}
    \AXC{$(\ATOP^{\epsilon_f(i)} \nvdash^{\epsilon_f(i)} \varphi_i \mid 1\leq i\leq n_f )$}
    \RL{\fns$f_R$}
    \TIC{$\FH\, (\Upsilon_1,\ldots, \Upsilon_{n_f}) \nvdash f(\varphi_1,\ldots,\varphi_{n_f})$}
    \DP 
\end{center}
A similar observation applies to the rule below $g_L$.
}
}

\begin{center}
    \AXC{$( \AATOP^{\epsilon_g(i)}\nvdash^{\epsilon_g(i)} \Upsilon_i \mid 1\leq i\leq n_g ) $}
    \AXC{$\varphi_j \nvdash^{\epsilon_g(j)} \Upsilon_j$}
    \LL{\fns$g_L$}
    \BIC{$g(\varphi_{1},\ldots,\varphi_{n_g})\nvdash\GC\, (\Upsilon_1,\ldots, \Upsilon_{n_g})$}
    \DP
\end{center}

\begin{center}
\begin{tabular}{cc}
    \AXC{$( \AATOP^{\epsilon_{g_2}(i)}\nvdash^{\epsilon_{g_2}(i)} \Upsilon_i \mid 1\leq i\leq n_{g_2} ) $}
    \LL{\fns$g_L^\neq$}    \UIC{$g_1(\varphi_{1},\ldots,\varphi_{n_{g_1}})\nvdash\GC_2\, (\Upsilon_1,\ldots, \Upsilon_{n_{g_2}})$}
    \DP
    &
    \AXC{$(\Upsilon_i \nvdash^{\epsilon_{f_1}(i)} \ABOT^{\epsilon_{f_1}(i)} \mid 1\leq i\leq n_{f_1} )$}
    \RL{\fns$f_R^\neq$}
    \UIC{$\FHp\, (\Upsilon_1,\ldots, \Upsilon_{n_{f_1}}) \nvdash f_2(\varphi_1,\ldots,\varphi_{n_{f_2}})$}
    \DP
\end{tabular}
\end{center}

\remove{
\textcolor{red}{GG: In the rule $f_R^\neq$ above the arguments of $\FH_1$, namely $\Upsilon_1,\ldots, \Upsilon_{n_{f_1}}$ are copied in the premises, vice versa the arguments of $f_2$, namely $\varphi_1,\ldots,\varphi_{n_{f_2}}$, are not copied at all in the premises... so, this rule is not ``linear''. What about the following rule, where also $\varphi_1,\ldots,\varphi_{n_{f_2}}$ are copied? Is it sound?
\begin{center}
\AXC{$(\Upsilon_i \nvdash^{\epsilon_{f_1}(i)} \ABOT^{\epsilon_{f_1}(i)} \mid 1\leq i\leq n_{f_1} )$}
\AXC{$(\ATOP^{\epsilon_{f_2}(i)} \nvdash^{\epsilon_{f_2}(i)} \Upsilon_i \mid 1\leq i\leq n_{f_2} )$}
\RL{\fns$f_R^\neq$}
\BIC{$\FHp\, (\Upsilon_1,\ldots, \Upsilon_{n_{f_1}}) \nvdash f_2(\varphi_1,\ldots,\varphi_{n_{f_2}})$}
\DP 
\end{center}}
}

Where $j$ in $f_R$ (resp.~$g_L$) is some index in $\{1,\ldots,n_f\}$ (resp.~$\{1,\ldots,n_g\}$).

$^{\ast\ast}$Side condition: $f_{1}\not=f_{2}$ and $g_{1}\not=g_{2}$. Furthermore, we require that the endsequents of the rules $f_R$, $f^\neq_R$, $g_L$, and $g^\neq_L$  not contain residuals.

\begin{itemize}
\item Logical introduction rules$^{\ast\ast\ast}$ for lattice connectives:
\end{itemize}
\begin{center}
	\begin{tabular}{c c c c}
    \AXC{$\AATOP\nvdash\Sigma$}
    \LL{\fns$\aatop_{L}$}
    \UIC{$\aatop\nvdash\Sigma$}
    \DP 
    &
    \AXC{$\varphi_i\nvdash \Sigma$}
    \LL{\fns$\vee_{L_i}$}
    \UIC{$\varphi_1\vee\varphi_2 \nvdash \Sigma$}
    \DP
    &
	\AXC{$\Pi\nvdash \varphi_i$}
    \RL{\fns$\wedge_{R_i}$}
    \UIC{$\Pi \nvdash \varphi_1\wedge\varphi_2$}
    \DP
    &
    \AXC{$\Pi\nvdash\ABOT$}
    \RL{\fns$\abot_{R}$}
    \UIC{$\Pi\nvdash\abot$}
    \DP
	\end{tabular}
\end{center}

\begin{center}
    \scriptsize{
    \AXC{$\varphi \nvdash \Sigma'\quad\psi \nvdash \Sigma'\quad (\varphi \wedge \psi \nvdash \Sigma'[A_i]^{pre})_{i\in I}\quad(\varphi \wedge \psi \nvdash \Sigma'[B_i]^{pre})_{i\in I}\quad(\varphi \wedge \psi \nvdash \Sigma'[C_j]^{suc})_{j\in J}\quad (\varphi \wedge \psi \nvdash \Sigma'[D_j]^{suc})_{j\in J}$}
    \LL{$\wedge_L$}
    \UIC{$\varphi \wedge \psi \nvdash \Sigma'[A_i\wedge B_i]_{i\in I}^{pre}[C_j\vee D_j]_{j\in J}^{suc}$}
    \DP}
\end{center}
\begin{center}
    \scriptsize{
    \AXC{$\Pi' \nvdash \varphi \quad \Pi' \nvdash \psi \quad (\Pi'[A_i]^{pre}\nvdash\varphi \vee \psi)_{i\in I}\quad(\Pi'[B_i]^{pre}\nvdash \varphi \vee \psi)_{i\in I}\quad(\Pi'[C_j]^{suc} \nvdash \varphi \vee \psi)_{j\in J}\quad (\Pi'[D_j]^{suc}\nvdash \varphi \vee \psi)_{j\in J}$}
    \RL{$\vee_R$}
    \UIC{$\Pi'[A_i\wedge B_i]_{i\in I}^{pre}[C_j\vee D_j]_{j\in J}^{suc} \nvdash \varphi \vee \psi$}
    \DP}
\end{center}

$^{\ast\ast\ast}$Side condition: $\Pi'$ and $\Sigma'$ are not branching (see Definition \ref{def:branching}). Furthermore, we require that the endsequents of the rules $\wedge_L$ and $\vee_R$  not contain residuals.

\begin{example}
    A $\mathrm{D.LE}^{r}$-derivation of the antisequent $g(p\vee q)\nvdash g(p)\vee g(q)$, for some $g\in\mathcal{G}$ with order-type $\langle1\rangle$:
    \begin{center}
        \AXC{$ $}
        \RL{\scriptsize{$Ax_{3}$}}
        \UIC{$\ATOP\nvdash p$}
        \AXC{$ $}
        \RL{\scriptsize{$Ax_{4}$}}
        \UIC{$q\nvdash p$}
        \LL{\scriptsize{$\vee_{L_{2}}$}}
        \UIC{$p\vee q\nvdash p$}
        \LL{\scriptsize{$g_{L}$}}
        \BIC{$g(p\vee q)\nvdash \GC(p)$}
        \RL{\scriptsize $g_R$}
        \UIC{$g(p\vee q)\nvdash g(p)$}
        
        \AXC{$ $}
        \RL{\scriptsize{$Ax_{3}$}}
        \UIC{$\ATOP\nvdash q$}
        \AXC{$ $}
        \RL{\scriptsize{$Ax_{4}$}}
        \UIC{$p\nvdash q$}
        \LL{\scriptsize{$\vee_{L_{1}}$}}
        \UIC{$p\vee q\nvdash q$}
        \LL{\scriptsize{$g_{L}$}}
        \BIC{$g(p\vee q)\nvdash \GC(q)$}
        \RL{\scriptsize$g_R$}
        \UIC{$g(p\vee q)\nvdash g(q)$}
        \RL{\scriptsize{$\vee_{R}$}}
        \BIC{$g(p\vee q)\nvdash g(p)\vee g(q)$}
        \DP
    \end{center}
\end{example}

\begin{example}
The side conditions are necessary for the soundness of the rules. Consider $g\in\mathcal{G}$ and $f \in \mathcal{F}$ both with order-type $\langle1\rangle$. Both $g(p) \wedge g(q) \vdash g(p \wedge q)$ and $f(p) \vdash f(p)$ are valid, hence the derivation below are not sound.
    {\scriptsize
    \begin{flushleft}
    \begin{tabular} {@{}c@{}}
    \!\!\!\!\!\!\!\!\!\!\!\!\!\!\!\!\!\!
        \AXC{$ $}
        \RL{\scriptsize{$Ax_{3}$}}
        \UIC{$\ATOP\nvdash q$}
        \RL{\scriptsize$\wedge_{R_2}$}
        \UIC{$\ATOP\nvdash p \wedge q$}
        
        \AXC{$ $}
        \RL{\scriptsize{$Ax_{4}$}}
        \UIC{$p\nvdash q$}
        \RL{\scriptsize{$\wedge_{R_{2}}$}}
        \UIC{$p\nvdash p \wedge q$}
        
        \LL{\scriptsize{$g_{L}$}}
        \BIC{$g(p) \nvdash \GC (p \wedge q)$}
        
        \AXC{$ $}
        \RL{\scriptsize{$Ax_{3}$}}
        \UIC{$\ATOP\nvdash p$}
        \RL{\scriptsize$\wedge_{R_1}$}
        \UIC{$\ATOP\nvdash p \wedge q$}
        \AXC{$ $}
        
        \RL{\scriptsize{$Ax_{4}$}}
        \UIC{$q\nvdash p$}
        \RL{\scriptsize{$\wedge_{R_{1}}$}}
        \UIC{$q\nvdash p\wedge q$}
        
        \LL{\scriptsize{$g_{L}$}}
        \BIC{$g(q) \nvdash \GC (p \wedge q)$}
        
        \LL{\scriptsize{$\wedge_{L}$}}
        \BIC{$g(p) \wedge g(q) \nvdash \GC (p \wedge q)$}
        \RL{\scriptsize$g_R$}
        \UIC{$g(p) \wedge g(q) \nvdash g(p \wedge q)$}
        \DP

        \AXC{}
        \RL{\scriptsize$\AATOP f$}
        \UIC{$\AATOP \nvdash f(p)$}
        \RL{\scriptsize$p \FC^\sharp_1$}
        \UIC{$p \nvdash \FC^\sharp_1(f(p))$}
        \LL{\scriptsize $(\FH, \FCS_i)$}
        \UIC{$\FH(p) \nvdash f(p)$}
        \LL{\scriptsize$f_L$}
        \UIC{$f(p) \nvdash f(p)$}
        \DP
        \end{tabular}
        \\
    \end{flushleft}
    }
     Notice that in the application of $\wedge_L$ the succedent is branching, while the endsequent of $p \FC^\sharp_1$ contains residuals.
\end{example}

\section{Soundness and completeness of the refutation calculus}\label{sec:soundness_completeness}

Let the {\em complexity} of a sequent be defined as the number of logical connectives plus the number of connectives (both logical and structural) plus twice the number of propositional atoms.

In what follows, we write $\Upsilon[\Upsilon']$ to indicate that $\Upsilon'$ is a substructure of $\Upsilon$. When $\Upsilon$ occurs in an (anti)sequent $S$, we write $\Upsilon[\Upsilon']^{pre}$ (resp.~$\Upsilon[\Upsilon']^{suc}$) to indicate that $\Upsilon'$ is a substructure of $\Upsilon$ and occurs in precedent (respectively, succedent) position, that is, $S$ is display-equivalent to some (anti)sequent $S'$ (notation: $S\equiv S'$)  in which $\Upsilon$ is the sole structure on the left-hand (respectively, right-hand) side of  $\vdash$ or of $\nvdash$. The symbol $\Upsilon[\Upsilon''/\Upsilon']$ denotes the substitution of $\Upsilon''$ for the substructure $\Upsilon'$ within $\Upsilon$.

\noindent We preliminarily  recall that
%\marginnote{\textcolor{red}{ADD:vedete che non va in questa tabella che spreca un sacco di spazio} {\color{blue}AS: potrebbe essere perche' align dentro tabular non e' il massimo, e' l'unico modo che mi e' venuto in mente per fare una tabella con equazioni etichettate per i riferimenti nella dim. Ve ne viene in mente un altro?} GG: Ho provato vari tricks, ma niente... se vogliamo allineare, organizzare tutto su due colonne e usare label, non se ne esce fuori mi pare. Quindi, ho barato e ho usato uno spazio veticale negative. A second di come va quando abbiamo terminate il paper, potrebbe essere necessario riaggustare lo spazio verticale e ci sta che in fase di stampa ce lo cancellino.}
%\begin{fact}\label{fact:DisplayFourCases}
if $\Pi\vdash\Sigma\equiv \Pi'\vdash\Sigma'$  then one of the following cases occurs:
\vspace{-0.7cm}
\begin{center}
\begin{tabular}{p{5cm}p{5.8cm}}
      {\begin{align}
               \Pi'[\Pi] \ \ &\mathrm{and} \ \ \Sigma[\Sigma'] \label{eq:1} \\ 
               \Pi[\Pi'] \ \ &\mathrm{and} \ \ \Sigma'[\Sigma] \label{eq:2} \end{align}}  & 
               {\reqnomode \begin{align}
               \Sigma'[\Pi] \ \ &\mathrm{and} \ \ \Sigma[\Pi'] \label{eq:3} \\
               \Pi[\Sigma'] \ \ &\mathrm{and} \ \ \Pi'[\Sigma] \label{eq:4} \end{align}}
\end{tabular}
\end{center}
\vspace{-0.7cm}
\remove{Let $\pi$ be a proof section built using only display rules
\marginnote{\textcolor{red}{ADD: please remove the derivation and just say we have two display equivalent $\equiv$ sequents.}}
\begin{center}
\AXC{$\pi$}
\noLine
\UIC{$\Pi_0 \vdash \Sigma_0$}
\RL{$\delta$}
\UIC{$\vdots$}
\RL{$\delta$}
\UIC{$\Pi_n \vdash \Sigma_n$}
\DP
\end{center}
Then, the leaf and the root of $\pi$ are display-equivalent, in symbols $\Pi_0 \vdash \Sigma_0 \equiv \Pi_n \vdash \Sigma_n$, and exactly four cases occur:

\begin{tabular}{p{5cm}p{5.8cm}}
      {\begin{align}
               \Pi_n[\Pi_0] \ \ &\mathrm{and} \ \ \Sigma_0[\Sigma_n] \label{eq:1} \\ 
               \Pi_0[\Pi_n] \ \ &\mathrm{and} \ \ \Sigma_n[\Sigma_0] \label{eq:2} \end{align}}  & 
               {\reqnomode \begin{align}
               \Sigma_n[\Pi_0] \ \ &\mathrm{and} \ \ \Sigma_0[\Pi_n] \label{eq:3} \\
               \Pi_0[\Sigma_n] \ \ &\mathrm{and} \ \ \Pi_n[\Sigma_0] \label{eq:4} \end{align}}
\end{tabular}}

\remove{
\begin{equation} \label{eq:1}
\Pi_n[\Pi_0] \ \ \mathrm{and} \ \ \Sigma_0[\Sigma_n]
\end{equation}
\begin{equation} \label{eq:2}
\Pi_0[\Pi_n] \ \ \mathrm{and} \ \ \Sigma_n[\Sigma_0]
\end{equation}
\begin{equation} \label{eq:3}
\Sigma_n[\Pi_0] \ \ \mathrm{and} \ \ \Sigma_0[\Pi_n]
\end{equation}
\begin{equation} \label{eq:4}
\Pi_0[\Sigma_n] \ \ \mathrm{and} \ \ \Pi_n[\Sigma_0]
\end{equation}}
%\begin{center}
%\begin{tabular}{rlcrl}
%(1) \ & $\Pi_n[\Pi_0]$ \ and \ $\Sigma_0[\Sigma_n]$ & \ \ \ & 
%(2) \ & $\Sigma_0[\Pi_n]$ \ and \ $\Sigma_n[\Pi_0]$ \\
%(3) \ & $\Pi_0[\Pi_n]$ \ and \ $\Sigma_n[\Sigma_0]$ & \ \ \ &
%(4) \ & $\Pi_0[\Sigma_n]$ \ and \ $\Pi_n[\Sigma_0]$ \\
%\end{tabular}
%\end{center}
%\end{fact}

\begin{theorem}\label{thm:soundness}
    The rules of $\mathrm{D.LE}^r$ are sound w.r.t.~the class of complete $\mathcal{L}$-algebras of corresponding signature.
\end{theorem}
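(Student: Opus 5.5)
We read soundness of a rule as follows: if every premise antisequent is invalid in some complete $\mathcal{L}$-algebra, then the conclusion is invalid in some complete $\mathcal{L}$-algebra. Here $\Pi\nvdash\Sigma$ is invalid when some algebra $\mathbb{A}$ and assignment $v$ give $v(\Pi)\not\leq v(\Sigma)$, with structural connectives read as their operational counterparts and $\AATOP,\ABOT$ as $\top,\bot$. Because rules with several premises may have their premises refuted in different models, the first step is an amalgamation device. Given countermodels $(\mathbb{A}_k,v_k)$ for finitely many premises, we take the product $\prod_k\mathbb{A}_k$ with the product assignment. Operations in a product of complete $\mathcal{L}$-algebras are computed componentwise, and the order is componentwise, so each premise stays refuted there. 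The catch is that all premises then share the same formulas, so ``refuted in the product'' only means failing in \emph{some} coordinate. The workable form is therefore a single-model lemma: it suffices to find one algebra and assignment refuting the conclusion, built from the premise countermodels.

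With that in place, we check the rules one at a time. The axioms are refuted in the two-element lattice or the four-element lattice $\mathbf{2}\times\mathbf{2}$, with every $f$ set to $\bot$ and every $g$ set to $\top$ except where a nontrivial interpretation is needed. For example, $p\nvdash q$ needs $v(p)\not\leq v(q)$. The display rules are immediate in both directions, since residuation gives $v(\text{premise})\leq\ldots$ iff $v(\text{conclusion})\leq\ldots$ in the same model.

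For $f_L$, $g_R$, and the lattice rules $\vee_{L_i}$, $\wedge_{R_i}$, $\aatop_L$, $\abot_R$, we keep the premise model and use monotonicity. For instance, $v(\varphi_i)\not\leq v(\Sigma)$ gives $v(\varphi_1\vee\varphi_2)\not\leq v(\Sigma)$.

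The axiomatic logical rules $g\ABOT$, $gp$, $pf$, $\AATOP f$, $gf$ and the structural rules $\FH\ABOT$, $\ATOP\GC$, $\FH p$, $p\GC$, $\FH\GC$ call for a \emph{free reinterpretation} of the connectives. Take a product model or a model over $\mathbf{2}$, and reinterpret $f$ and $g$ as, say, constants or the ``normal'' interpretation. An example is $f(\bar a)=\top$ if every $\epsilon$-positive argument is $\neq\bot$ and every $\epsilon$-negative argument is $\neq\top$, and $f(\bar a)=\bot$ otherwise, with $g$ defined dually. This interpretation is completely join-preserving (or meet-reversing) in each coordinate, since it is determined by ``being non-bottom'', and that is preserved by arbitrary joins on a two-element chain. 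The premises $\Upsilon_i\nvdash\ABOT$ say exactly that the arguments are non-bottom, so $\FH(\bar\Upsilon)$ is evaluated as $\top$, and this is not below $p$ or $\GC(\ldots)$. This works provided the reinterpretation does not change the values of the premise structures. The side conditions (no residuals in the endsequent, $f_1\neq f_2$, and so on) exist precisely to let us change the interpretation of the main connective freely. Formulas in premises may still contain $f$, but we can rename: for countermodel purposes, formulas occurring in premises are replaced by fresh atoms assigned their old values, after arguing that the conclusion only uses them as opaque arguments. The obstacle is that the same $f$ may occur inside the $\varphi_i$. To handle this, we pass to the product $\mathbb{A}\times\mathbf{2}$, keeping the old interpretation in the first coordinate and using the new one in the second, with residuals existing by completeness.

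The main obstacle is $\wedge_L$ and $\vee_R$. For $\wedge_L$ we have countermodels for $\varphi\nvdash\Sigma'$ and for $\psi\nvdash\Sigma'$. We also have countermodels for each variant $\varphi\wedge\psi\nvdash\Sigma'[A_i]$, $\varphi\wedge\psi\nvdash\Sigma'[B_i]$, and so on, where each displayed $A_i\wedge B_i$ (in precedent position) or $C_j\vee D_j$ (in succedent position) is replaced by one conjunct or disjunct. The non-branching condition on $\Sigma'$ means that, in the product model, the value of $\Sigma'$ is built from $+\wedge$ and $-\vee$ nodes only through $-g,-\GC,+f,+\FH$ paths that split coordinatewise. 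We then take the product of all these countermodels. In the product, $v(\varphi\wedge\psi)$ is the componentwise meet, and we want to show it is not below $v(\Sigma'[A_i\wedge B_i][C_j\vee D_j])$. I would set this up as an induction on the non-branching structure of $\Sigma'$, showing that the value of $\Sigma'$ in the product is not above $v(\varphi\wedge\psi)$ because failure in a single factor lifts to the product. This is where I expect the most care to be needed, since meets in precedent position below a $\GC$ do not commute with product projections unless the path is of the allowed kind. The rule $\vee_R$ is handled dually by order-duality.
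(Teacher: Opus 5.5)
There is a genuine gap, and it sits exactly where the theorem has content. Your treatment of the axioms, the display rules, $f_L$, $g_R$, $\wedge_{R_i}$, $\vee_{L_i}$, $\top_L$, $\bot_R$, and the premise-free rules $g\ABOT$, $gp$, $pf$, $\AATOP f$, $gf$ (constant interpretations on $\mathbf{2}$) is fine. The unproved cases are $\wedge_L$, $\vee_R$, and the rules with premises that need a connective reinterpreted: $\FH\ABOT$, $\ATOP\GC$, $\FH p$, $p\GC$, $\FH\GC$, $f_R$, $g_L$, $f_R^{\neq}$, $g_L^{\neq}$.

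Your amalgamation device cannot supply what is missing. In a product, all term functions and the order are computed componentwise, so an antisequent is refuted in $\prod_k\mathbb{A}_k$ iff it is refuted in some factor. A product of premise countermodels therefore refutes the conclusion only if one premise countermodel already does. For $\wedge_L$ this is false in general: a countermodel of $\varphi\nvdash\Sigma'$ need not refute $\varphi\wedge\psi\nvdash\Sigma'$. Already the instance $\Sigma'=C\vee D$ of $\wedge_L$ is Whitman's condition: if $\varphi\wedge\psi\vdash C\vee D$ is valid, then so is one of $\varphi\vdash C\vee D$, $\psi\vdash C\vee D$, $\varphi\wedge\psi\vdash C$, $\varphi\wedge\psi\vdash D$. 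Semantic proofs of this need a genuinely new algebra built from the countermodels (e.g.\ by interval doubling, which here would also have to respect every $f$ and $g$). Your planned induction on the non-branching shape of $\Sigma'$ contains no such construction. ``Failure in a single factor lifts to the product'' is true but useless, since no factor is known to refute the conclusion.

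The reinterpretation step fails for the same reason. In $\mathbb{A}\times\mathbf{2}$ the first coordinate still computes $f^{\mathbb{A}}(v(\overline{\Upsilon}))$, which may be $\bot$; that is why you wanted to reinterpret in the first place. In the second coordinate the $\Upsilon_i$ are recomputed under the new interpretation, where nothing keeps them away from $\bot$. For instance, with $\varepsilon_f=(1,\partial)$ your indicator interpretation is $f(x,y)=x\wedge\neg y$ on $\mathbf{2}$, so $p\wedge f(\top,p)$ is constantly $\bot$ there. Yet $p\wedge f(\top,p)\vdash\ABOT$ is invalid: take the chain $0<m<1$, $f(x,y)=x\wedge n(y)$ with $n(1)=0$, $n(m)=n(0)=1$, and $v(p)=m$. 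So $p\wedge f(\top,p)\nvdash\ABOT$ is a legitimate premise of $\FH\ABOT$ with conclusion $\FH(p\wedge f(\top,p),q)\nvdash\ABOT$, and the second coordinate is useless for it. Renaming subformulas by fresh atoms does not help either. The original conclusion is a substitution instance of the renamed one, and invalidity passes from an instance to the sequent it instantiates, not conversely.

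The idea you are missing is the paper's detour through proof theory. By completeness and cut elimination for $\mathrm{D.LE}$, soundness of each rule reduces to a proof-analysis lemma. For example: if $\FH(\overline{\Upsilon})\vdash\ABOT$ is cut-free derivable, then some $\Upsilon_i\vdash^{\varepsilon_f(i)}\ABOT^{\varepsilon_f(i)}$ is derivable. Such lemmas are proved by induction on complexity, with a case analysis of the last non-display rule. The side conditions exclude $f_R$, $g_L$ (no residuals) and $\vee_L$, $\wedge_R$ (non-branching) as that last rule. Without this, or an explicit algebraic construction replacing it, the hard cases remain unproved.
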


\begin{proof}
For axiomatic rules, it suffices to consider an interpretation $v$ such that $v(p)=\top$ and $v(q)=\bot$. For display postulates, reasoning by contraposition (both top-down and bottom-up) is sufficient. For $0$-ary introduction rules for $f$ and $g$ it is enough to take $v$ such that $v(g(\overline{\psi})) = \top$ and $v(f(\overline{\psi})) = \bot$ for all input $\overline{\psi}$.  The soundness of $f_L$ and $g_R$ follows contrapositively from the fact that the following rules are derivable in D.LE:
\begin{center}
    \AXC{$f(\varphi_{1},\ldots,\varphi_{n_{f}})\vdash\Sigma$ }
    %\LL{\fns$f_{L}'$}
    \UIC{$\FH(\varphi_{1},\ldots,\varphi_{n_{f}})\vdash\Sigma$}
    \DP\qquad
    \AXC{$\Pi\vdash g(\varphi_{1},\ldots,\varphi_{n_{g}})$}
    %\RL{\fns$g_{R}'$}
    \UIC{$\Pi\vdash \GC(\varphi_{1},\ldots,\varphi_{n_{g}})$}
    \DP
\end{center}
A similar argument proves the soundness of  $\top_L$ and $\bot_R$. As for the soundness of  $\wedge_{R_i}$ and $\vee_{L_i}$, it is enough to notice that for all elements $a,b,c$ of an arbitrary lattice, $a\nleq b$ implies both $a \nleq b \wedge c$ and $a \vee c \nleq b$.
Now, to prove the soundness of $\FH\ABOT$:
    \begin{center}
        \item
        \AXC{$(\Upsilon_i \nvdash^{\epsilon_f(i)} \ABOT^{\epsilon_f(i)}\mid 1\leq i \leq n_f)$}
            \RL{\fns$\FH\ABOT$}
        \UIC{$\FH(\overline{\Upsilon}) \nvdash \ABOT$}
        \DP
    \end{center}
    we argue contrapositively, by induction on the complexity of $\FH(\overline{\Upsilon}) \vdash \ABOT$, that if $\FH(\overline{\Upsilon}) \vdash \ABOT$ is cut-free derivable, then $\Upsilon_i \vdash^{\epsilon_f(i)} \ABOT^{\epsilon_f(i)}$ is derivable for some $i$. Since $\FH(\overline{\Upsilon}) \vdash \ABOT$ is not display-equivalent to any initial sequent, it  must either result from a logical rule or from  $\top_W$ or $\bot_W$, modulo application of display postulates. We analyze each case. %To prove the soundness of $\FH\ABOT$, we show contrapositively that if $\FH(\overline{\Upsilon}) \vdash \ABOT$ is derivable, then so is $\Upsilon_i \vdash^{\epsilon_f(i)} \ABOT^{\epsilon_f(i)}$ for some $i$. %This claim is proved by induction on the number of logical connectives and atomic variables. 
    %Since $\FH(\overline{\Upsilon}) \vdash \ABOT$ is cut-free derivable and it is not display-equivalent to any initial sequent, it must have been obtained via an application of a logical rule or one of the rules $\top_W$, $\bot_W$, modulo an arbitrary number of display postulates. We examine all the possible cases. 
   
    Case $\wedge_{L_i}$. Assume that $\FH(\overline{\Upsilon}) \vdash \ABOT$ contains a conjunction $A_1 \wedge A_2$ in precedent position, and suppose $A_1\wedge A_2$ is the principal formula in the last (non-display) rule applied in the derivation of $\FH(\overline{\Upsilon}) \vdash \ABOT$. For some structure $\Sigma[\ABOT]$, we have:
        \begin{center}
            \AXC{$A_i \vdash \Sigma[\ABOT]$}
            \LL{\fns$\wedge_{L_i}$}
            \UIC{$A_1\wedge A_2 \vdash \Sigma[\ABOT]$}
            \dashedLine
            %\RL{\fns$\delta$}
            \UIC{$\FH(\overline{\Upsilon})[A_1\wedge A_2]^{pre} \vdash \ABOT$}
            \DP
        \end{center}
        where, above and throughout this section, the dashed line stands for a finite number of applications of display postulates and $A_i \vdash \Sigma[\ABOT] \equiv \FH(\overline{\Upsilon})[A_i/A_1\wedge A_2]^{suc} \vdash \ABOT$ is derivable. By inductive hypothesis, $\Upsilon_j[A_i/A_1\wedge A_2]^{pre} \vdash^{\epsilon_f(j)} \ABOT^{\epsilon_f(j)}$ is derivable for some $j$. If $A_1\wedge A_2$ is not present in $\Upsilon_j$, then $\Upsilon_j[A_i/A_1\wedge A_2] = \Upsilon_j$ and there is nothing else to prove. Otherwise, for some structure $\Sigma'$ we have
        \begin{center}
            \AXC{$\Upsilon_j[A_i/A_1\wedge A_2]^{pre} \vdash^{\epsilon_f(j)} \ABOT^{\epsilon_f(j)}$}
            \dashedLine
            %\RL{\fns$\delta$}
            \UIC{$A_i \vdash \Sigma'[\ABOT^{\epsilon_f(j)}]$}
            \RL{\fns$\wedge_{L_i}$}
            \UIC{$A_1 \wedge A_2 \vdash \Sigma'[\ABOT^{\epsilon_f(j)}]$}
            \dashedLine
            %\RL{\fns$\delta$}
            \UIC{$\Upsilon_j[A_1\wedge A_2/A_1\wedge A_2]^{pre} \vdash^{\epsilon_f(j)} \ABOT^{\epsilon_f(j)}$}
            \DP
        \end{center}
        Since $\Upsilon_j[A_1\wedge A_2/A_1\wedge A_2]^{pre} = \Upsilon_j$, we are done.
        
        Cases $\vee_{R_i}$, $f_L$, $g_R$, $\top_L$, and $\bot_R$ are analogous to the previous case and therefore omitted. In all of them except $\vee_R$, we apply the inductive hypothesis relying on the fact that the complexity decreases whenever a logical connective becomes structural.
        
        Case $\wedge_R$. Assume the structure $\FH(\overline{\Upsilon}) \vdash \ABOT$ contains a conjunction $A \wedge B$ in succedent position, and $A\wedge B$ is the principal formula in the last logical rule (modulo display postulates) applied in the derivation of $\FH(\overline{\Upsilon}) \vdash \ABOT$. For some structure $\Pi$, we have
        \begin{center}
            \AXC{$\Pi \vdash A$}
            \AXC{$\Pi \vdash B$}
            \RL{\fns$\wedge_R$}
            \BIC{$\Pi \vdash A \wedge B$}
            \dashedLine
            %\RL{\fns$\delta$}
            \UIC{$\FH(\overline{\Upsilon})[A\wedge B]^{suc} \vdash \ABOT$}
            \DP
        \end{center}
        where $\Pi \vdash A \equiv \FH(\overline{\Upsilon})[A/A\wedge B]^{suc} \vdash \ABOT$ and $\Pi \vdash B \equiv \FH(\overline{\Upsilon})[B/A\wedge B]^{suc} \vdash \ABOT$ are derivable. We apply the inductive hypothesis twice to obtain $\Upsilon_i[A/A\wedge B]^{suc} \vdash^{\epsilon_f(i)} \ABOT^{\epsilon_f(i)}$ and $\Upsilon_j[B/A\wedge B]^{suc} \vdash^{\epsilon_f(j)} \ABOT^{\epsilon_f(j)}$ for some $i$ and $j$. The only nontrivial case is when $i=j$ and $\Upsilon_i$ contains the aforementioned instance of $A\wedge B$. In this  scenario, for some structure $\Sigma'$,
        \begin{center}
            \AXC{$\Upsilon_i[A/A\wedge B]^{suc} \vdash^{\epsilon_f(i)} \ABOT^{\epsilon_f(i)}$}
            \dashedLine
            %\RL{\fns$\delta$}
            \UIC{$A \vdash \Sigma'$}
            \AXC{$\Upsilon_i[B/A\wedge B]^{suc} \vdash^{\epsilon_f(i)} \ABOT^{\epsilon_f(i)}$}
            \dashedLine
            %\RL{\fns$\delta$}
            \UIC{$B\vdash \Sigma'$}
            \RL{\fns$\wedge_R$}
            \BIC{$A \wedge B \vdash \Sigma'$}
            \dashedLine
            %\RL{\fns$\delta$}
            \UIC{$\Upsilon_i[A \wedge B/A\wedge B]^{suc} \nvdash^{\epsilon_f(i)} \ABOT^{\epsilon_f(i)}$}
            \DP
        \end{center}
      %  Since $\Upsilon_i[A\wedge B/A\wedge B]^{suc} = \Upsilon_i$, we are done. 
      which proves the statement. The case $\vee_L$ is analogous to $\wedge_R$ and is therefore omitted.
        
        Cases $f_R$ and $g_L$ are excluded by virtue of the side condition of $\FH \ABOT$.
        
        Case $\bot_W$. Assume that for some structures $\Pi$ and $\Sigma$ the sequent $\FH(\overline{\Upsilon}) \vdash \ABOT$ is derived as follows, where $\Pi \vdash \ABOT$ is a derivable sequent.
        \begin{center}
            \AXC{$\Pi \vdash \ABOT$}
            \RL{$\bot_W$}
            \UIC{$\Pi \vdash \Sigma$}
            \dashedLine
            %\RL{$\delta$}
            \UIC{$\FH(\overline{\Upsilon})\vdash \ABOT$}
            \DP
        \end{center}
\noindent 
We need to consider four cases. In Case \ref{eq:1}, $\Sigma[\ABOT]$ and $\FH(\overline{\Upsilon})[\Pi]$, assume that $\Pi$ is different from $\FH(\overline{\Upsilon})$ (otherwise $\bot_W$ is applied vacuously), i.e.~$\Upsilon_i[\Pi]$ for some $i$. Hence, $\Upsilon_i \vdash^{\varepsilon_f(i)} \ABOT^{\varepsilon_f(i)} \equiv \Pi \vdash \Sigma'$ for some $\Sigma'$, and the following derivation establishes our thesis:
        \begin{center}
            \AXC{$\Pi \vdash \ABOT$}
            \RL{$\bot_W$}
            \UIC{$\Pi \vdash \Sigma'$}
            \dashedLine
            %\RL{$\delta$}
            \UIC{$\Upsilon_i \vdash^{\varepsilon_f(i)} \ABOT^{\varepsilon_f(i)}$}
            \DP
        \end{center}
Case \ref{eq:2} where $\ABOT[\Sigma]$ and $\Pi[\FH(\overline{\Upsilon})]$ is trivial, as the premise coincides with the endsequent. Case \ref{eq:3} in which $\ABOT[\Pi]$ and $\Sigma[\FH(\overline{\Upsilon})]$ cannot arise: $\Pi$ ($\Sigma$) is in precedent (resp., succedent) position and display rules preserve structure positions. Finally, as to Case \ref{eq:4}, if $\Pi[\ABOT]$ and $\FH(\overline{\Upsilon})[\Sigma]$, i.e.~$\Pi \vdash \ABOT \equiv \FH(\overline{\Upsilon})[\ABOT/\Sigma]^{suc} \vdash \ABOT$ and $\Upsilon_j[\Sigma]$ for some $j$, then $\Sigma$ must be different from an instance of $\ABOT$ (if not, rule $\bot_W$ would have been applied vacuously), hence $\Sigma$ has a higher complexity than $\ABOT$. Hence, the inductive hypothesis yields $\Upsilon_i[\ABOT/\Sigma]^{suc} \vdash^{\varepsilon_f(i)} \ABOT^{\varepsilon_f(i)}$ for some $i$. If $i\neq j$ then $\Upsilon_i[\Sigma]$ does not hold, so the substitution is vacuous and there is nothing else to prove; if $i = j$, then for some  $\Pi'\in \mathsf{Str}_{\mathcal{F}}$,
        \begin{center}
            \AXC{$\Upsilon_i[\ABOT/\Sigma]^{suc} \vdash^{\varepsilon_f(i)} \ABOT^{\varepsilon_f(i)}$}
            \dashedLine
            %\RL{$\delta$}
            \UIC{$\Pi' \vdash \ABOT$}
            \RL{$\bot_W$}
            \UIC{$\Pi' \vdash \Sigma$}
            \dashedLine
            %\RL{$\delta$}
            \UIC{$\Upsilon_i[\Sigma/\Sigma]^{suc} \vdash^{\varepsilon_f(i)} \ABOT^{\varepsilon_f(i)}$}
            \DP
        \end{center}
        where the active instance of $\ABOT$  in the rule $\bot_W$ is the instance of $\ABOT$ that substitutes $\Sigma$ in the previous proof section.

\noindent Case $\top_W$ is similar to the previous case and is therefore omitted.

The proofs of the soundness of $\FH p$ and $f^\neq_R$  closely mirror that of $\FH \ABOT$, except in the $\bot_W$ case with $\Pi[\FH(\overline{\Upsilon})]$, illustrated below. %case. The only exception is in the $\bot_W$ case with $\Pi[\FH(\overline{\Upsilon})]$, where we are in the situation illustrated below.

%As for rules $\FH p$ and $f^\neq_R$, their proof of soundness proceeds virtually identically to the proof for the $\FH \ABOT$ case. The only exception is when dealing with the $\bot_W$ case and $\FH(\overline{\Upsilon}) \triangleleft X$, meaning that we are in the situation illustrated below
\begin{center}
    \begin{tabular}{cc}
        \AXC{$\FH(\overline{\Upsilon}) \vdash \ABOT$}
        \LL{$\bot_W$}
         \UIC{$\FH(\overline{\Upsilon}) \vdash p$}
         \DP
         & 
         \AXC{$\FH(\overline{\Upsilon}) \vdash \ABOT$}
        \RL{$\bot_W$}
         \UIC{$\FH(\overline{\Upsilon}) \vdash f'(\overline{\varphi})$}
         \DP
    \end{tabular}
\end{center}
In both cases, we rely on the soundness of $\FH \ABOT$. To prove the soundness of $f_R$ contrapositively, we must show that if $\FH(\overline{\Upsilon}) \vdash f(\overline{\varphi})$ is derivable, then either $\Upsilon_i \vdash^{\varepsilon_f(i)} \ABOT^{\varepsilon_f(i)}$ holds for some $i$, or $\Upsilon_j \vdash^{\varepsilon_f(i)} \varphi_j$ for all $j$. The proof parallels that for $\FH p$ and $f^\neq_R$, with the added case where the last rule in the derivation of $\FH(\overline{\Upsilon}) \vdash f(\overline{\varphi})$ is
%
%In both cases, we can leverage the soundness of $\FH\ABOT$ to complete the proof. To prove contrapositively the soundness of the rule $f_R$, we need to show that if $\FH(\overline{\Upsilon})\vdash f(\overline{\varphi})$ is derivable either one of $\Upsilon_i \vdash^{\varepsilon_f(i)}\ABOT^{\varepsilon_f(i)}$ is derivable for some $i$ or $\Upsilon_j \vdash^{\varepsilon_f(i)} \varphi_j$ is derivable for all $j$. The proof proceeds identically to the proof for the cases $\FH p$ and $f^\neq_R$, except that there is now a new case to consider, when the last logical rule applied in the derivation of $\FH(\overline{\Upsilon})\vdash f(\overline{\varphi})$ is
\begin{center}
    \AxiomC{$\Big(\Upsilon_i \fCenter^{\!\!\epsilon_{f}(i)}\, \varphi_i  \mid 1\leq i\leq n_f\Big)$}
						\RL{\fns$f_R$}
						\UI$\FH\, (\overline{\Upsilon})\fCenter f(\overline{\varphi})$
						\DP
\end{center}
If that is the case, then clearly $\Upsilon_j \vdash^{\varepsilon_f(i)} \varphi_j$ is derivable for all $j$, as desired.
The proofs of the soundness of  $\AATOP\GC$, $p\GC$, $g^\neq_L$, $g_L$, and $\FH\GC$ are analogous to the cases above.

Finally, a similar argument proves the soundness of the rules $\wedge_L$ and $\vee_R$, but with fewer cases to consider, due to their side conditions: if $\varphi \wedge \psi \vdash \Sigma'$ (resp.~$\Pi' \vdash \varphi \vee \psi$) is derivable, then the last non-display rule applied cannot be $\vee_L$ or $\wedge_R$.
\end{proof}

\begin{theorem}\label{th:refutationcomplete}
    If $\Pi \vdash \Sigma$ is not derivable in $\mathrm{D.LE}$ and does not contain residuals, then $ \Pi \nvdash \Sigma$ is derivable in $\mathrm{D.LE}^r$.
\end{theorem}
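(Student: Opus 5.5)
We argue by induction on the complexity of $\Pi\vdash\Sigma$, proving the equivalent statement: for every residual-free sequent $\Pi\vdash\Sigma$, either it is (cut-free) derivable in $\mathrm{D.LE}$, or $\Pi\nvdash\Sigma$ is derivable in $\mathrm{D.LE}^r$. Cut elimination for $\mathrm{D.LE}$ lets us restrict attention to cut-free derivations. The case split is on the outermost shapes of $\Pi$ and $\Sigma$. Each shape is an atom $p$, a constant $\top$ or $\bot$, a formula with main connective $\wedge$, $\vee$, $f$ or $g$, or a structure $\AATOP$, $\ABOT$, $\FH(\overline{\Upsilon})$, $\GC(\overline{\Upsilon})$. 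Residual-freeness guarantees that only $\FH$ can occur as the main structural connective of $\Pi$ and only $\GC$ as that of $\Sigma$, so the rules of $\mathrm{D.LE}^r$ with the ``no residuals'' side conditions apply to the endsequent. For each pair of shapes we match a $\mathrm{D.LE}^r$ rule whose conclusion is $\Pi\nvdash\Sigma$. We then check that, if its premises are not all refutable, the corresponding $\mathrm{D.LE}$-rules assemble a derivation of $\Pi\vdash\Sigma$.

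\textbf{Base and structural cases.}
The atomic/constant pairs are immediate. $\AATOP\nvdash\ABOT$, $p\nvdash\ABOT$, $\AATOP\nvdash q$ and $p\nvdash q$ (for $p\neq q$) are axioms; $p\vdash p$ is $\mathrm{Id}$; and $\top$, $\bot$ reduce to $\AATOP$, $\ABOT$ via $\top_L$ and $\bot_R$.

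For $\FH(\overline{\Upsilon})\vdash\ABOT$ (and likewise $\vdash p$, $\vdash f_2(\overline{\varphi})$ with $f_2\neq f$, $\vdash g(\ldots)$, and the duals with $\GC$), the premises have the form $\Upsilon_i\vdash^{\epsilon_f(i)}\ABOT^{\epsilon_f(i)}$, and these have smaller complexity. If all of them are refutable, the rule $\FH\ABOT$ (resp.\ $\FH p$, $f^{\neq}_R$, $g\GC$ via $g_R$, $\FH\GC$) gives the antisequent. Otherwise some $\Upsilon_i\vdash^{\epsilon_f(i)}\ABOT^{\epsilon_f(i)}$ is derivable; we display $\Upsilon_i$ and apply $\bot_W$ (or $\top_W$) to recover the whole $\FH(\overline{\Upsilon})\vdash\Sigma$.

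The case $\FH(\overline{\Upsilon})\vdash f(\overline{\varphi})$ is handled by $f_R$. If every $\Upsilon_j\vdash^{\epsilon_f(j)}\varphi_j$ is derivable, the $\mathrm{D.LE}$ rule $f_R$ applies. If instead every $\Upsilon_i\vdash\ABOT$-type premise fails, and some $\Upsilon_j\vdash\varphi_j$ fails, then by induction both are refutable and $\mathrm{D.LE}^r$'s $f_R$ fires. The dual case $g(\overline{\varphi})\vdash\GC(\overline{\Upsilon})$ is treated the same way with $g_L$. The cases where the main connective is $f$ on the left, or $g$ on the right, use the invertible rules $f_L$, $g_R$ together with their inverses noted in the proof of Theorem~\ref{thm:soundness}.

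\textbf{Lattice cases.}
For $\Pi\vdash\varphi_1\wedge\varphi_2$: if some $\Pi\nvdash\varphi_i$ is obtained, use $\wedge_{R_i}$; otherwise both sequents are derivable and $\wedge_R$ applies. $\vee_L$ is dual.

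The main obstacle is $\varphi\wedge\psi\vdash\Sigma$ and, dually, $\Pi\vdash\varphi\vee\psi$. Here $\Sigma$ may contain branching nodes, and the derivation might use either the left conjunct or some rule acting inside $\Sigma$. The plan is to first write $\Sigma$ as $\Sigma'[A_i\wedge B_i]^{pre}[C_j\vee D_j]^{suc}$, with $\Sigma'$ non-branching, by exhaustively exposing every maximal branching $+\vee/-\wedge$ node reachable through $\{+f,+\FH,-g,-\GC\}$ nodes. If all the premises of $\mathrm{D.LE}^r$'s $\wedge_L$ are refutable, we are done.

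Otherwise we must show that $\varphi\wedge\psi\vdash\Sigma$ is derivable. If $\varphi\vdash\Sigma'$ is derivable, we get $\varphi\wedge\psi\vdash\Sigma$ by $\wedge_{L_1}$ after restoring the branching nodes via $\wedge_{L_i}/\vee_{R_i}$ (displayed). If instead, say, both $\varphi\wedge\psi\vdash\Sigma'[A_i]$ and $\varphi\wedge\psi\vdash\Sigma'[B_i]$ are derivable, we combine them by the two-premise rule on the exposed node ($\vee_L$ or $\wedge_R$, after display). This is possible precisely because that node is reachable only through residuable connectives, so it can be displayed.

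The delicate point is the bookkeeping. The premise indices must be chosen so that failure of one premise in every pair yields derivability. If that fails, the argument must fall back on a further induction on the number of branching nodes, applying the non-branching analysis (via the lemma of \cite[Lemma 8.2]{CoPa11}) one node at a time. The remaining mixed cases, such as $\varphi\wedge\psi\vdash\GC(\overline{\Upsilon})$, are covered as instances of this analysis.
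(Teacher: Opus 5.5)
Your structural cases are fine. In one respect they are more direct than the paper's: from a derivable $\Upsilon_i\vdash^{\epsilon_f(i)}\ABOT^{\epsilon_f(i)}$, the rule $\bot_W$ (or $\top_W$) followed by display gives $\FH(\overline{\Upsilon})\vdash\Sigma$ with no detour through operational counterparts and Cut. And since $\FH\ABOT$, $f_R$, $f^{\neq}_R$, $\FH\GC$ carry no non-branching side condition, no normalization is needed there.

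The gap is in the lattice case $\varphi\wedge\psi\vdash\Sigma$ (and dually $\Pi\vdash\varphi\vee\psi$), which rests on a misreading of $\wedge_L$. The exposed subformulas $A_i\wedge B_i$ in precedent position and $C_j\vee D_j$ in succedent position are $+\wedge$ and $-\vee$ nodes, so they are \emph{non}-branching. They mark the places where $\mathrm{D.LE}$ could apply the one-premise rules $\wedge_{L_k}$ or $\vee_{R_k}$ instead of decomposing the principal $\varphi\wedge\psi$, and the premises of $\wedge_L$ refute each of these alternatives separately. The side condition requires the whole succedent to be non-branching, so you cannot put branching nodes into those slots. Your `pairs' problem is the symptom. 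For a genuine branching node, say $C\wedge D$ in succedent position, derivability of $\varphi\wedge\psi\vdash\Sigma[C/C\wedge D]$ alone does not give derivability of $\varphi\wedge\psi\vdash\Sigma$. So the step `some premise is derivable, hence so is the conclusion' fails, and the proposed fallback induction via \cite{CoPa11} is not an argument. Likewise, `restoring the branching nodes via $\wedge_{L_i}/\vee_{R_i}$' is impossible, since those rules introduce $+\wedge$ and $-\vee$ nodes.

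What is missing is the paper's preliminary elimination of branching nodes \emph{modulo display}, done before any $\wedge_L$ or $\vee_R$ case. Whenever $\Pi\vdash\Sigma\equiv\Pi''\vdash C\wedge D$, apply the induction hypothesis to $(\Pi\vdash\Sigma)[C/C\wedge D]^{suc}$ and $(\Pi\vdash\Sigma)[D/C\wedge D]^{suc}$. If both are derivable, display and $\wedge_R$ give $\Pi\vdash\Sigma$. Otherwise, display, $\wedge_{R_k}$ and display back give $\Pi\nvdash\Sigma$. Treat $C\vee D\vdash\Sigma''$ dually, and likewise $f(\overline{\varphi})\vdash\Sigma''$, $\Pi''\vdash g(\overline{\varphi})$, $\top\vdash\Sigma''$ and $\Pi''\vdash\bot$. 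These last cases are needed because branching nodes can sit below operational $+f$ or $-g$ nodes, not only below $\FH$ or $\GC$. Your outermost-shape split performs these steps only at the top level.

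Once no such display-equivalent form exists, the sequent is non-branching. Then $\wedge_L$ applies with $I,J$ indexing all displayable conjunctions in precedent position and disjunctions in succedent position of $\Sigma$. Any single derivable premise yields $\varphi\wedge\psi\vdash\Sigma$ by one application of $\wedge_{L_k}$ or $\vee_{R_k}$ modulo display, so no bookkeeping arises. The case $\varphi\wedge\psi\vdash\GC(\overline{\Upsilon})$ is then an ordinary instance.
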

\begin{proof}
    By induction on the complexity of the sequent $\Pi \vdash \Sigma$, considering all  cases.

    If $\Pi \coloneqq \bot$ or $\Sigma \coloneqq \top$, the conclusion holds vacuously. 
    
    If $\Pi \vdash \Sigma\equiv \Pi' \vdash A \wedge B$, either $\Pi' \vdash A$ or $\Pi' \vdash B$ is not derivable. If (say) $\Pi' \vdash A \equiv (\Pi \vdash \Sigma)[A/A\wedge B]^{suc}$ is not derivable, by inductive hypothesis we get $\Pi \nvdash \Sigma[A/A\wedge B]^{suc}$. Via display postulates we obtain $\Pi' \nvdash A$, and thus $\Pi' \nvdash A \wedge B$. Hence, we apply the display postulates to get $\Pi \nvdash \Sigma$. The case in which $\Pi \vdash \Sigma\equiv A \vee B \vdash \Sigma'$ is analogous. 

    If $\Pi\vdash\Sigma\equiv f(\overline{\varphi})\vdash\Sigma'$, then $\FH(\overline{\varphi})\vdash\Sigma' \equiv (\Pi\vdash \Sigma)[\FH(\overline{\varphi})/f(\overline{\varphi})]^{pre}$ is not derivable, and by inductive hypothesis we get $\FH(\overline{\varphi})\nvdash\Sigma'$. We apply $f_L$ and display postulates to get $\Pi\nvdash\Sigma$. The cases in which $\Pi\vdash\Sigma$ is display-equivalent either to $\Pi'\vdash g(\overline{\varphi})$, $\Pi'\vdash\bot$, or $\top \vdash \Sigma'$, are analogous.

    Given the previous cases, we assume that $\Pi \vdash \Sigma$ is non-branching and displays no $f \in \mathcal{F}$ or $\top$ (resp.~$g \in \mathcal{G}$ or $\bot$) in precedent (resp.~succedent) position. Let us treat the case of $\Sigma \coloneqq \ABOT$ (the case of $\Pi\coloneqq\ATOP$ is analogous) and consider the possible subcases.
   %From the previous cases, from the remainder of the proof we can assume that $\Pi \vdash \Sigma$ is not branching and there are no $f \in \mathcal{F}$ or $\top$ (resp.~$g \in \mathcal{G}$ or $\bot$) connectives displayable on the left (resp.~right). Let us now assume $\Sigma \coloneqq \ABOT$ and distinguish all the remaining cases.
   % \begin{itemize}
       % \item 

        \noindent 
    -- If $\Pi \coloneqq \AATOP$, $\Pi \coloneqq p$, and $\Pi \coloneqq g(\overline{\varphi})$, then $\Pi \nvdash \Sigma$ is derivable via $\mathrm{Ax}1$, $\mathrm{Ax}2$, and $g \ABOT$, respectively. 
    
       % \item
       \noindent 
     --  $\Pi \coloneqq \FH(\overline{\Upsilon})$. By assumption, $\hat{f}(\Upsilon_1,\ldots,\Upsilon_{n_f})\vdash \ABOT$ is not derivable. Let $\upsilon_i\in \mathcal{L}$ be the operational counterpart  of $\Upsilon_i$  for every $i$ in $\{1,\ldots,n_f\}$ ($\upsilon_i$ exists since $\Upsilon_i$ does not contain residuals). Hence, $\FH(\overline{\Upsilon})\vdash f(\overline{\upsilon})$ is derivable and, for every $\Sigma'$, if $\Upsilon_i\vdash\Sigma'$ is derivable, so is $\upsilon_i\vdash\Sigma'$. We claim that $\Upsilon_i \vdash^{\varepsilon_f(i)}\check{\bot}^{\varepsilon_f(i)}$ is not derivable, for any $1\leq i\leq n_{f}$. Assume the contrary, and, w.l.o.g., that $i=1$ and $\varepsilon_f(1)=1$ (the other cases being analogous). Then we can derive $\hat{f}(\Upsilon_1,\ldots,\Upsilon_{n_f})\vdash \ABOT$ as follows, against our assumption.

\begin{center}
    \AXC{$\vdots$}
    \noLine
    \UIC{$\hat{f}(\Upsilon_1,\ldots,\Upsilon_{n_f})\vdash f(\upsilon_1,\ldots,\upsilon_{n_f})$}

    \AXC{$\Upsilon_1\vdash\ABOT$}
    \LL{$\bot_R$}
    \UIC{$\Upsilon_1 \vdash \bot$}
    \AXC{}
    \RL{$\bot_L$}
    \UIC{$\bot\vdash\ABOT$}
    \RL{$\bot_W$}
    \UIC{$\bot\vdash \check{f}_1^\sharp(\ABOT,\upsilon_2,\ldots,\upsilon_{n_f})$}
    \RL{\fns Cut}
    \BIC{$\Upsilon_1 \vdash\check{f}_1^\sharp(\ABOT,\upsilon_2,\ldots,\upsilon_{n_f})$}
    \noLine
    \UIC{$\vdots$}
    \noLine
    \UIC{$\upsilon_1 \vdash\check{f}_1^\sharp(\ABOT,\upsilon_2,\ldots,\upsilon_{n_f})$}
    \LL{\fns $\FH \nvdash \FCS_1$}
    \UIC{$\hat{f}(\upsilon_1,\ldots,\upsilon_{n_f})\vdash\ABOT$}
    \RL{$f_L$}
    \UIC{$f(\upsilon_1,\ldots,\upsilon_{n_f})\vdash\ABOT$}
    \RL{\fns Cut}
    \BIC{$\hat{f}(\Upsilon_1,\ldots,\Upsilon_{n_f})\vdash \ABOT$}
    \DP
\end{center}

 Hence, no sequent $\Upsilon_i \vdash^{\varepsilon_f(i)} \check{\bot}^{\varepsilon_f(i)}$ is derivable:  then, by inductive hypothesis, $\Upsilon_i \nvdash^{\varepsilon_f(i)} \check{\bot}^{\varepsilon_f(i)}$, for any $1\leq i\leq n_{f}$. Finally, we apply  $\bot_{f}$ to derive $\Pi \nvdash \Sigma$.
%\item 

\noindent 
-- $\Pi \coloneqq A \wedge B$. If $A \wedge B \vdash \ABOT$ is not derivable, then neither of $A \vdash \ABOT$ and $B \vdash \ABOT$ are. Then by inductive hypothesis,  $A \nvdash \ABOT$ and $B \nvdash \ABOT$, and an application of $\wedge_L$ yields $A \wedge B \nvdash \ABOT$.
 %The sequents $A \vdash \ABOT$ and $B \vdash \ABOT$ are not derivable, otherwise, $A \wedge B \vdash \ABOT$ would be derivable. By the inductive hypothesis, we deduce that $A \nvdash \ABOT$ and $B \nvdash \ABOT$ are derivable in our refutational calculus, and since $\ABOT$ is not branching, applying $\wedge_L$ we get $A \wedge B \nvdash \ABOT$.
   % \end{itemize}

    Next, we assume that $\Sigma \coloneqq p$ (the case of $\Pi \coloneqq p$ is similar) and distinguish such subcases.
  %  \begin{itemize}
       % \item 

       \noindent
     --  $\Pi \coloneqq q \neq p$ and $\Pi \coloneqq g(\overline{\varphi})$. The antisequent $\Pi \nvdash \Sigma$ is derivable via $\mathrm{Ax}4$ and $g p$, respectively.
     
        \noindent
     -- $\Pi \coloneqq \FH(\overline{\Upsilon})$ and $\Pi \coloneqq A \wedge B$. Similar to the subcases $\FH(\overline{\Upsilon}) \vdash \ABOT$ and $A \wedge B \vdash \ABOT$   above.
    %\end{itemize}
    
    \smallskip
Let us now assume that $\Sigma \coloneqq f(\overline{\varphi})$ (the case in which $\Pi \coloneqq g(\overline{\varphi})$ is analogous).
   % \begin{itemize}
     %   \item 
     
     \noindent
     --    $\Pi \coloneqq g(\overline{\varphi'})$. The antisequent $\Pi \nvdash \Sigma$ is derivable via the initial rule $g f$.
     
         \noindent
     -- $\Pi \coloneqq \hat{f'}(\overline{\Upsilon})$ with $f' \neq f$. Similar to the subcase $\FH(\Upsilon) \vdash \ABOT$ analyzed above.
     
        \noindent
     -- $\Pi \coloneqq \FH(\overline{\Upsilon})$. As in the previous $\hat{f}(\Upsilon_1,\ldots,\Upsilon_{n_f})\vdash \ABOT$ case, if $\Pi\vdash\Sigma$ is not derivable, then $\Upsilon_i \vdash^{\varepsilon_f(i)} \check{\bot}^{\varepsilon_f(i)}$ is not derivable for all $i$ in $\{1,\ldots,n_f\}$. Moreover, for some $j$, the sequent $\Upsilon_j \vdash^{\varepsilon_f(j)} \varphi_j$ is also not derivable, for otherwise one would get $\hat{f}(\Upsilon_1,\ldots,\Upsilon_{n_f})\vdash f(\varphi_1, \ldots, \varphi_{n_{f}})$ as shown below:
        \begin{center}
            \AxiomC{$\Big(\Upsilon_i \fCenter^{\!\!\epsilon_{f}(i)}\, \varphi_i  \mid 1\leq i\leq n_f\Big)$}
						\RL{\fns$f_R$}
						\UI$\FH\, (\overline{\Upsilon})\fCenter f(\overline{\varphi})$
						\DP
        \end{center}
        By the inductive hypothesis, we conclude  $\Upsilon_i \nvdash^{\varepsilon_f(i)} \check{\bot}^{\varepsilon_f(i)}$ and $\Upsilon_j \nvdash^{\varepsilon_f(j)} \varphi_j$ for all $j$ and some $i$, yielding $\Pi \nvdash \Sigma$ via $f_R$.
        
            \noindent
     -- $\Pi \coloneqq A \wedge B$. Similar to the subcase $A\wedge B \vdash \ABOT$ analyzed above.
     
     \smallskip
    %\end{itemize}
    Now, suppose that $\Sigma \coloneqq A \vee B$ (the case in which $\Pi\coloneqq A\wedge B$ is analogous).
    
        \noindent
     -- $\Pi \coloneqq \FH(\overline{\Upsilon})[C_i\wedge D_i]^{pre}_{i\in I}[C'_j\vee D'_j]^{suc}_{j\in J}$. None of the sequents 
        \[\FH(\overline{\Upsilon}) \vdash A,\quad \FH(\overline{\Upsilon}) \vdash B,\quad \FH(\overline{\Upsilon})[C_i]^{pre}_i \vdash A \vee B,\quad \FH(\overline{\Upsilon})[D_i]^{pre}_i \vdash A \vee B,\]
        \[\FH(\overline{\Upsilon})[C'_j]^{suc}_j \vdash A \vee B,\quad \FH(\overline{\Upsilon})[D'_j]^{suc}_j \vdash A \vee B,\]
        are derivable for all $i \in I$ and $j \in J$, for otherwise, one would be able to derive $\FH(\overline{\Upsilon}) \vdash A \vee B$ applying the appropriate lattice rules modulo zero or more display postulates. By inductive hypothesis, all the sequents
        \[\FH(\overline{\Upsilon}) \nvdash A,\quad \FH(\overline{\Upsilon}) \nvdash B,\quad \FH(\overline{\Upsilon})[C_i]^{pre}_i \nvdash A \vee B,\quad \FH(\overline{\Upsilon})[D_i]^{pre}_i \nvdash A \vee B,\]
        \[\FH(\overline{\Upsilon})[C'_j]^{suc}_j \nvdash A \vee B,\quad \FH(\overline{\Upsilon})[D'_j]^{suc}_j\nvdash A \vee B,\]
        are derivable for all $i\in I$ and $j \in J$, thus we get $\FH(\overline{\Upsilon}) \nvdash A \vee B$ via the rule $\vee_R$.
        
          \noindent
     -- $\Pi \coloneqq C \wedge D$. This subcase is similar to the previous one.
    %\end{itemize}
    
    \noindent Finally, if $\Pi \vdash \Sigma$ is $\FH(\Upsilon_{1},\ldots,\Upsilon_{n_f})\vdash\GC\, (\Upsilon'_1,\ldots, \Upsilon'_{n_g})$, we proceed as in the case of $\FH(\overline{\Upsilon}) \vdash \ABOT$ to get the conclusion.  
\end{proof}
\begin{corollary}\label{cor:completeness}

For any language $\mathcal{L}_\mathrm{LE}$,   the refutation calculus $\mathrm{D.LE^{r}}$ is complete, and the basic normal $\mathcal{L}_\mathrm{LE}$-logic $\mathbf{L}_\mathrm{LE}$  is decidable.
\end{corollary}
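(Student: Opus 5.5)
Completeness here means: every $\mathcal{L}_\mathrm{LE}$-sequent $\varphi\vdash\psi$ that is invalid on the class of complete $\mathcal{L}$-algebras has its antisequent $\varphi\nvdash\psi$ derivable in $\mathrm{D.LE^{r}}$. The plan is to chain three earlier results. First, $\mathrm{D.LE}$ is sound w.r.t.~complete $\mathcal{L}$-algebras (Section \ref{sec:preliminaries}). So if $\varphi\vdash\psi$ is invalid, it is not derivable in $\mathrm{D.LE}$. Second, formulas of $\mathcal{L}_\mathrm{LE}$ contain no residuals, since $f^\sharp_i,g^\flat_i$ only live in $\mathcal{L}^*_\mathrm{LE}$, and no structural connectives either. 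Hence Theorem \ref{th:refutationcomplete} applies directly and yields $\varphi\nvdash\psi$ in $\mathrm{D.LE}^r$. The same argument works for any residual-free structural sequent $\Pi\vdash\Sigma$.

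For decidability, I will use cut elimination for $\mathrm{D.LE}$ together with Theorems \ref{thm:soundness} and \ref{th:refutationcomplete}. These give, for any residual-free sequent $S$, exactly one of two outcomes: $S$ is cut-free derivable in $\mathrm{D.LE}$, or its antisequent is derivable in $\mathrm{D.LE}^r$. Not both can hold, by soundness of the two calculi w.r.t.~the same class of complete algebras together with the completeness of $\mathrm{D.LE}$ (the latter lets us read the dichotomy as valid versus invalid).

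Rather than running two semi-decision procedures in parallel, I would argue directly that refutation search is finite. Every premise of every non-display rule of $\mathrm{D.LE}^r$ has strictly smaller complexity than the conclusion, under the complexity measure fixed at the start of Section \ref{sec:soundness_completeness}. This holds because premises either remove a connective or replace structures by $\ABOT$, $\AATOP$, or by immediate subformulas. The induction in the proof of Theorem \ref{th:refutationcomplete} is therefore effective. The display-equivalence class of a sequent is finite, so each case split there is decidable given decisions for the smaller sequents. Unfolding this recursion decides derivability of $\varphi\vdash\psi$.

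The main obstacle is this last step. One must check that every case analysed in the proof of Theorem \ref{th:refutationcomplete} only calls on finitely many strictly smaller sequents that are effectively computable. The delicate case is $\wedge_L/\vee_R$, whose premises index over all $\wedge$/$\vee$ occurrences in $\Sigma'$ or $\Pi'$; these are finitely many and each premise reduces complexity. Display-equivalence also does not change complexity, so it creates no loop.
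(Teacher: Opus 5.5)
Your proposal is correct. The completeness half is the paper's argument: an invalid residual-free sequent is not $\mathrm{D.LE}$-derivable (this is soundness of $\mathrm{D.LE}$, as you say), so Theorem~\ref{th:refutationcomplete} yields its antisequent.

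For decidability you take a different route from the paper. The paper only records that, for residual-free $\Pi,\Sigma$, either $\Pi\vdash\Sigma$ is derivable in $\mathrm{D.LE}$ or $\Pi\nvdash\Sigma$ is derivable in $\mathrm{D.LE}^r$. It leaves two things implicit: exclusivity (Theorem~\ref{thm:soundness} plus soundness of $\mathrm{D.LE}$), and the Post-style step that two complementary recursively enumerable sets are decidable. You instead show that derivability in $\mathrm{D.LE}^r$ is itself decidable by terminating backward search:
\begin{itemize}
\item complexity is invariant under display postulates, and every non-display rule strictly lowers it;
\item display classes are finite;
\item each antisequent is the conclusion of only finitely many non-display rule instances.
\end{itemize}
The dichotomy then transfers the verdict to $\mathbf{L}_\mathrm{LE}$.

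The paper's route is shorter and uses nothing about the shape of the rules. Yours gives an explicit procedure whose search depth is bounded by the complexity of the input. It is essentially the termination argument the paper later gives for its tableaux in Section~\ref{sec:tableau}.

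Keep it phrased as backward search over $\mathrm{D.LE}^r$-derivations, where correctness is immediate: an antisequent is derivable iff some display variant of it is the conclusion of a non-display rule instance with derivable premises. If you literally unfold the case analysis of Theorem~\ref{th:refutationcomplete} instead, you also need, in each case, the converse implication: if the relevant smaller sequents are derivable, then $\Pi\vdash\Sigma$ is derivable in $\mathrm{D.LE}$. That converse holds, but it is not what the theorem states.
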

\begin{proof}
  The first part is immediate from  Theorem \ref{th:refutationcomplete} and the completeness of D.LE. As to the second part, for any pair of structures $\Pi,\Sigma$ without residuals, completeness of D.LE and Theorem \ref{th:refutationcomplete} ensure that either $\Pi\vdash\Sigma$ is derivable in $\mathrm{D.LE}$ or $\Pi\nvdash\Sigma$ in $\mathrm{D.LE^{r}}$. The thesis follows from the observation that $\mathbf{L}_{\mathrm{LE}}$-sequents do not contain residuals. %For any pair of multisets $\Pi,\Sigma$ containing either $\FH$ or $\GC$, there exist $\Pi',\Sigma'$ without $\FH$ or $\GC$ such that $\Pi\vdash\Sigma\equiv \Pi'\vdash\Sigma'$. Since $\Pi\vdash\Sigma$ is derivable in $\mathrm{D.LE}$ exactly when $\Pi'\vdash\Sigma'$ is derivable in $\mathrm{D.LE}$, the conclusion follows. 
\end{proof}

\section{Tableaux for LE-logics}\label{sec:tableau}

In this section, we define tableaux rules for LE-logics by considering contrapositive versions of $\mathrm{D.LE}^r$ rules. A refutation in the tableaux presentation of the calculus is a rooted graph where every node is labelled with a sequent, the root is labelled with the sequent to be refuted, and each other node is introduced using a rule of the calculus.\footnote{We opted to directly manipulate sequents rather than (labelled) formulas, since this presentation is more compact and facilitates the comparison with the sequent calculus $\mathrm{D.LE}^r$.} As usual in the case of tableaux, if two nodes are introduced in different branches in the conclusion of a rule, then the reading is disjunctive; if two nodes are introduced on the same branch in the conclusion of a rule, then the reading is conjunctive.\footnote{Notice that the conclusion of each display rule depends on the order type of the $i$-th (resp.~$j$-th) coordinate, therefore each rule is a shorthand for two rules.} 
\begin{itemize}
\item Display rules$^{\star}$ for $f\in\mathcal{F}$ and $g\in\mathcal{G}$:
\end{itemize}
\begin{center}
\footnotesize{
\begin{tabular}{ccc}
    \begin{forest}
[ {$\FH (\Upsilon_1,\ldots,\Upsilon_i,\ldots \Upsilon_{n_{f}}) \vdash \Sigma$} 
[ {$\Upsilon_{i} \vdash^{\epsilon_{f}(i)} \mathring{f}^{\,\sharp}_{i}\, (\Upsilon_1,\ldots,\Sigma,\ldots, \Upsilon_{n_{f}})$}]
]
\end{forest}
& \ \ \ \ \ \ \ \ \ \ \ \ \ &
 \begin{forest}
[ {$\Upsilon_{i} \vdash^{\epsilon_{f}(i)} \mathring{f}^{\,\sharp}_{i}\, (\Upsilon_1,\ldots,\Sigma,\ldots, \Upsilon_{n_{f}})$}
[ {$\FH (\Upsilon_1,\ldots,\Upsilon_i,\ldots \Upsilon_{n_{f}}) \vdash \Sigma$} ]
]
\end{forest}
\end{tabular}}
\end{center}

\begin{center}
\footnotesize{
\begin{tabular}{ccc}
\begin{forest}
[ {$\Pi \vdash \GC (\Upsilon_1,\ldots,\Upsilon_j,\ldots \Upsilon_{n_{g}})$} 
[{$\mathring{g}^{\,\flat}_{j}\, (\Upsilon_1,\ldots, \Pi,\ldots \Upsilon_{n_{g}}) \vdash^{\epsilon_{g}(j)} \Upsilon_{j}$}]
]
\end{forest}
& \ \ \ \ \ \ \ \ \ \ \ \ \ &
\begin{forest} 
[{$\mathring{g}^{\,\flat}_{j}\, (\Upsilon_1,\ldots, \Pi,\ldots \Upsilon_{n_{g}}) \vdash^{\epsilon_{g}(j)} \Upsilon_{j}$} [ {$\Pi \vdash \GC (\Upsilon_1,\ldots,\Upsilon_1,\ldots \Upsilon_{n_{g}})$} ]]
\end{forest}
\end{tabular}}
\end{center}

$^{\star}$Notational convention: $1\leq i\leq n_{f}$, $1\leq j\leq n_{g}$. Furthermore, 
$\mathring{f}^{\,\sharp}_{i}$ (resp.~$\mathring{g}^{\,\flat}_{i}$) is $\FC^{\,\sharp}_{i}$ (resp.~$\GH^{\,\flat}$) if $\varepsilon_{f}(i) = 1$, and $\FH^{\,\sharp}_{i}$ (resp.~$\GC^{\,\flat}$) otherwise.

$^{\star}$Side condition: any residuation rule can be applied if the conclusion has not been obtained by some previous rule application.
\begin{itemize}
\item Structural rules$^{*}$ for $f\in\mathcal{F}$ and $g\in\mathcal{G}$:
\end{itemize}
\begin{center}
\footnotesize{
\begin{tabular}{ccc}
\begin{forest}
[ {$\FH\, (\overline{\Upsilon}) \vdash \ABOT$}
[ {$\Upsilon_1 \vdash^{\epsilon_f(1)} \ABOT^{\epsilon_f(1)}$}]
[$\cdots$, no edge]
[ {$\Upsilon_{n_{f}} \vdash^{\epsilon_f(n_{f})} \ABOT^{\epsilon_f(n_{f})}$}]]
\end{forest} 
 & \ \ \ \ \ & 
\begin{forest}
[ {$\AATOP \vdash \GC\, (\overline{\Upsilon'})$}
[ {$\AATOP^{\epsilon_g(1)} \vdash^{\epsilon_g(1)} \Upsilon'_1$}]
[$\cdots$, no edge]
[ {$\AATOP^{\epsilon_g(n_{g})} \vdash^{\epsilon_g(n_{g})} \Upsilon'_{n_{g}}$}
]]
\end{forest}
\end{tabular}}
\end{center}

\begin{center}
\footnotesize{
\begin{tabular}{ccc}
\begin{forest}
[ {$\FH\, (\overline{\Upsilon}) \vdash p$}
[ {$\Upsilon_1 \vdash^{\epsilon_f(1)} \ABOT^{\epsilon_f(1)}$}]
[$\cdots$, no edge]
[ {$\Upsilon_{n_{f}} \vdash^{\epsilon_f(n_{f})} \ABOT^{\epsilon_f(n_{f})}$}]
]
\end{forest} 
 & \ \ \ \ \ &  
\begin{forest}
[ {$p \vdash\GC\, (\overline{\Upsilon'})$}
[ {$\AATOP^{\epsilon_g(1)} \vdash^{\epsilon_g(1)} \Upsilon'_1$}]
[{$\cdots$}, no edge]
[ {$\AATOP^{\epsilon_g(n_{g})} \vdash^{\epsilon_g(n_{g})} \Upsilon'_{n_{g}}$}]]
\end{forest}  
 \\
\end{tabular}}
\end{center}

\begin{center}
    \footnotesize{
    \begin{forest}
[ {$\FH(\overline{\Upsilon}) \vdash \GC\, (\overline{\Upsilon'})$} [{$\Upsilon_1 \vdash^{\epsilon_f(1)} \ABOT^{\epsilon_f(1)}$}]
[{$\cdots$}, no edge]
[{$\Upsilon_{n_{f}} \vdash^{\epsilon_f(n_{f})} \ABOT^{\epsilon_f(n_{f})} $}]
[{$\AATOP^{\epsilon_g(1)}\vdash^{\epsilon_g(1)} \Upsilon'_1$}]
[{$\cdots$}, no edge]
[{$\AATOP^{\epsilon_g(n_{g})} \vdash^{\epsilon_g(n_{g})} \Upsilon'_{n_{g}}$}]
]
\end{forest}
    }
\end{center}
$^{*}$Side condition: in each rule above, the premise should not contain residuals.
\begin{itemize}
\item Logical rules$^{**}$ for $f\in\mathcal{F}$ and $g\in\mathcal{G}$:
\end{itemize}
\begin{center}
\footnotesize{
\begin{tabular}{ccc}
\begin{forest}
[ {$f (\Upsilon_1,\ldots, \Upsilon_{n_{f}}) \vdash \Upsilon$} 
[ {$\FH\, (\Upsilon_1,\ldots, \Upsilon_{n_{f}}) \vdash \Upsilon $} ] 
]
\end{forest}
& \ \ \ \ \ \ \ \ \ \ &
\begin{forest}
[ {$\Upsilon \vdash g (\Upsilon_1,\ldots, \Upsilon_{n_{g}})$} 
[ {$\Upsilon \vdash \GC\, (\Upsilon_1,\ldots, \Upsilon_{n_{g}})$}]
]
\end{forest}
 \\
\end{tabular}}
\end{center}
\begin{center}
\footnotesize{
\begin{forest}
[ {$\FH(\overline{\Upsilon})\vdash f(\overline{\varphi})$} [ {$\Upsilon_1 \vdash^{\epsilon_f(1)} \ABOT^{\epsilon_f(1)}$}]
[ {$\cdots$}]
[$\Upsilon_{n_{f}} \vdash^{\epsilon_f(n_{f})} \ABOT^{\epsilon_f(n_{f})}$]
[ {$\Upsilon_1 \vdash^{\epsilon_{f(1)}} \varphi_1$} [ {$\Upsilon_{n_{f}} \vdash^{\epsilon_{f(n_{f})}} \varphi_{n_{f}}$}, edge={dotted,thick} ]]
]
\end{forest}}
\end{center}

\begin{center}
\footnotesize{
\begin{forest}
[ {$g(\overline{\varphi}) \vdash \GC\, (\overline{\Upsilon'})$}
[ {$\varphi_1 \vdash^{\epsilon_{g(1)}} \Upsilon'_1$} [ {$\varphi_{n_{g}} \vdash^{\epsilon_{g(n_{g})}} \Upsilon'_{n_{g}}$}, edge={dotted,thick} ]]
[{$\AATOP^{\epsilon_g(1)} \vdash^{\epsilon_g(1)} \Upsilon'_1$}]
[ {$\cdots$}]
[ {$\AATOP^{\epsilon_g(n_{g})} \vdash^{\epsilon_g(n_{g})} \Upsilon'_{n_{g}}$} ]
]
\end{forest}}  
\end{center}

\begin{center}
\footnotesize{
\begin{tabular}{ccc}
\begin{forest}
[ {$\FHp\, (\overline{\Upsilon}) \vdash f_2(\overline{\varphi})$} 
[ {$\Upsilon_1 \vdash^{\epsilon_{f_1}(1)} \ABOT^{\epsilon_{f_1}(1)}$}]
[{$\cdots$}, no edge]
[ {$\Upsilon_{n_{f_{1}}} \vdash^{\epsilon_{f_1}(n_{f_{1}})} \ABOT^{\epsilon_{f_1}(n_{f_{1}})}$}] ]
\end{forest}
 & \ \ \  & 
\begin{forest}
[ {$g_1(\overline{\varphi}) \vdash \GC_2\, (\overline{\Upsilon'})$} 
[ {$\AATOP^{\epsilon_{g_2}(1)} \vdash^{\epsilon_{g_2}(1)} \Upsilon'_1$}]
[{$\cdots$}, no edge]
[ {$\AATOP^{\epsilon_{g_2}(1)} \vdash^{\epsilon_{g_2}(n_{g_{2}})} \Upsilon'_{n_{g_2}}$}] ]
\end{forest}
 \\
\end{tabular}}
\end{center}

 %Moreover, $f_{(1)}(\overline{\Upsilon}) = f(\Upsilon_1, \ldots, \Upsilon_{n_{f_{(1)}}})$, $g_{(2)}(\overline{\Upsilon}) = g(\Upsilon_1, \ldots, \Upsilon_{n_{g_{(2)}}})$, $f_{(2)}(\overline{\varphi})=f(\varphi_{1},\ldots,\varphi_{n_{f_{(2)}}})$, $g_{(1)}(\overline{\varphi})=g(\varphi_{1},\ldots,\varphi_{n_{g_{(1)}}})$, $\FH_{(1)}(\overline{\Upsilon}) = f(\Upsilon_1, \ldots, \Upsilon_{n_{f_{(1)}}})$ and $\GC_{(2)}(\overline{\Upsilon'}) = g(\Upsilon'_1, \ldots, \Upsilon'_{n_{g_{(2)}}})$.  

$^{**}$Side condition: $f_{1}\not=f_{2}$ and $g_{1}\not=g_{2}$, and the premises of all the other rules above should not contain residuals.
\begin{itemize}
\item Rules$^{***}$ for lattice connectives:
\end{itemize}
\begin{center}
\small{
\begin{tabular}{ccc}
%\begin{forest}
%[{$\Pi\vdash A\wedge B$}
%[{$\begin{matrix}\Pi\vdash A \\ \Pi\vdash B \end{matrix}$} ]]
%\end{forest}
\begin{forest}
[{$\Pi\vdash A\wedge B$}
[{$\Pi\vdash A$} [{$\Pi\vdash B$}] ]]
\end{forest}
& \ \ \ \ \ \ \ \ \ \ &
%\begin{forest}
%[{$A\vee B\vdash\Sigma$}
%[{$\begin{matrix}A\vdash\Sigma \\ B\vdash\Sigma \end{matrix}$}]]
%\end{forest}
\begin{forest}
[{$A\vee B\vdash\Sigma$}
[{$A\vdash\Sigma$} [{$B\vdash\Sigma$}] ]]
\end{forest} \\
\end{tabular}}
\end{center}

\begin{center}
\scriptsize{
    \begin{forest}
[{$\varphi_{1} \wedge \varphi_{2} \vdash \Sigma'[A_{1i}\wedge A_{2i}]_{i\in I}^{pre}[B_{1j}\vee B_{2j}]_{j\in J}^{suc}$}
    [{$\varphi_{1} \wedge \varphi_{2} \vdash \Sigma'[A_{k1}]^{pre}$}]
    [{$\cdots$}, no edge]
    [{$\varphi_{1} \wedge \varphi_{2} \vdash \Sigma'[A_{km}]^{pre}$}]
    [{$\varphi_{k} \vdash \Sigma'$}]
    %[{$\varphi_{2} \vdash \Sigma'$}]
    [{$\varphi_{1} \wedge \varphi_{2} \vdash \Sigma'[B_{k1}]^{suc}$}]
    [{$\cdots$}, no edge]
    [{$\varphi_{1} \wedge \varphi_{2} \vdash \Sigma'[B_{kn}]^{suc}$}]
]
\end{forest}}
\end{center}

\begin{center}
\scriptsize{
\begin{forest}
[{$\Pi'[A_{1i}\wedge A_{2i}]_{i\in I}^{pre}[B_{1j}\vee B_{2j}]_{j\in J}^{suc} \vdash \varphi_{1}\vee\varphi_{2}$}
    [{$\Pi'[A_{k1}]^{pre}\vdash\varphi_{1} \vee\varphi_{2} $}]
    [{$\cdots$}, no edge]
    [{$\Pi'[A_{km}]^{pre}\vdash\varphi_{1} \vee\varphi_{2} $}]
    [{$\Pi' \vdash \varphi_{k}$}]
    %[{$\Pi' \vdash \varphi_{2}$}]
    [{$\Pi'[B_{k1}]^{suc}\vdash \varphi_{1} \vee \varphi_{2} $}] 
    [{$\cdots$}, no edge]
    [{$\Pi'[B_{kn}]^{suc}\vdash \varphi_{1} \vee \varphi_{2} $}] 
]
\end{forest}}
\end{center}

$^{***}$Notational convention: $I=\{1,\ldots,m\}$ and $J=\{1,\ldots,n\}$. Furthermore, $k$ ranges in $\{1,2\}$ (meaning that the last two rules above create $2m +2n +2$ new branches).

$^{***}$Side condition: $\Pi'$ and $\Sigma'$ are not branching, and $\varphi_{1} \wedge \varphi_{2} \vdash \Sigma'[A_{1i}\wedge A_{2i}]_{i\in I}^{pre}[B_{1j}\vee B_{2j}]_{j\in J}^{suc}$ and $\Pi'[A_{1i}\wedge A_{2i}]_{i\in I}^{pre}[B_{1j}\vee B_{2j}]_{j\in J}^{suc} \vdash \varphi_{1}\vee\varphi_{2}$ do not contain residuals.

A branch of a tableau tree is {\em terminated} if for any sequent $\Pi'\vdash\Sigma'$ belonging to a node along the branch, no tableau rule can be applied to $\Pi'\vdash\Sigma'$ without generating a sequent that already occurs in the branch. A tableau tree is terminated if and only if each of its branches is terminated.

We say that a terminated branch is {\em open} whenever it contains (at least) one sequent of one of the following forms:
%We say that a terminated branch is {\em open} whenever there exists (at least) one sequent $\Pi'\vdash\Sigma'$ belonging to its terminal node which has one of the following forms:
    \[\AATOP \vdash \ABOT \quad p \vdash \ABOT \quad \AATOP \vdash q \quad p \vdash q\]
    \[g(\overline{\psi}) \vdash \ABOT \quad g(\overline{\psi}) \vdash p \quad g(\overline{\psi}) \vdash f(\overline{\varphi}) \quad  p\vdash f(\overline{\psi}) \quad \AATOP\vdash f(\overline{\psi})\] 
A terminated branch is {\em closed} if it is not open. 
%whenever for any sequent $\Pi'\vdash\Sigma'$ belonging to its terminal node, if $\Pi'\vdash\Sigma'$ does not contain structural connectives $\FH$ (resp. $\GC$) with $f\in\mathcal{F}^{\ast}\setminus\mathcal{F}$ (resp. $g\in\mathcal{G}^{\ast}\setminus\mathcal{G}$), then $\Pi'\vdash\Sigma'$ has the form $p\vdash p$.
We say that a terminated tableau tree is closed exactly when it has (at least) one closed branch, and open otherwise.

\begin{example}
\remove{The following is a closed tableau tree for $g(p)\wedge g(q)\vdash g(p\wedge q)$:
\begin{center}
\footnotesize{
    \begin{forest}
        [{$g(p)\wedge g(q)\vdash g(p\wedge q)$}
        [{$g(p)\wedge g(q)\vdash\GC(p\wedge q)$}
        [{$\GH^{\,\flat}(g(p)\wedge g(q))\vdash p\wedge q$}
        [{$\GH^{\,\flat}(g(p)\wedge g(q))\vdash p$}
        [{$\GH^{\,\flat}(g(p)\wedge g(q))\vdash q$}
        [{$g(p)\wedge g(q)\vdash \GC(p)$}
        [{$g(p)\wedge g(q)\vdash \GC(q)$}
        [{$g(p)\vdash\GC(p)$}
        [{$g(p)\vdash\GC(q)$}
        [{$p\vdash p$}
        [{$p\vdash q$}]
        [{$\ATOP\vdash q$}]
        ]
        [{$\ATOP\vdash p$}
        [{$p\vdash q$}]
        [{$\ATOP\vdash q$}]
        ]
        ]
        [{$g(q)\vdash\GC(q)$}
        [{$p\vdash p$}
        [{$q\vdash q $}]
        [{$\ATOP\vdash q$}]
        ]
        [{$\ATOP\vdash p$}
        [{$q\vdash q$}]
        [{$\ATOP\vdash q$}]
        ]
        ]
        ]
        [{$g(q)\vdash\GC(p)$}
        [{$g(p)\vdash\GC(q)$}
        [{$q\vdash p$}
        [{$p\vdash q$}]
        [{$\ATOP\vdash q $}]
        ]
        [{$\ATOP\vdash p$}
        [{$p\vdash q$}]
        [{$\ATOP\vdash q $}]
        ]
        ]
        [{$g(q)\vdash\GC(q)$}
        [{$q\vdash p$}
        [{$q\vdash q$}]
        [{$\ATOP\vdash q$}]
        ]
        [{$\ATOP\vdash p$}
        [{$q\vdash q$}]
        [{$\ATOP\vdash q$}]
        ]
        ]
        ]
        ]
        ]
        ]
        ]
        ]
        ]
        ]
    \end{forest}}
\end{center}}
This is an open tableau tree for $g(p\vee q)\vdash g(p)\vee g(q)$, for some $g\in\mathcal{G}$ with order-type $\langle1\rangle$:
\begin{center}
\small{
     \begin{forest}
        [{$g(p\vee q)\vdash g(p)\vee g(q)$}
        [{$g(p\vee q)\vdash g(p)$}
        [{$g(p\vee q)\vdash \GC(p)$}
        [{$p\vee q\vdash p$}
        [{$p\vdash p$}
        [{$q\vdash p$}]
        ]
        ]
        [{$\ATOP\vdash p$}]
        ]
        ]
        [{$g(p\vee q)\vdash g(q)$}
        [{$g(p\vee q)\vdash \GC(q)$}
        [{$p\vee q\vdash q$}
        [{$p\vdash q$}
        [{$q\vdash q$}]
        ]
        ]
        [{$\ATOP\vdash q$}]
        ]
        ]
        ]
    \end{forest}}
\end{center}
\end{example}

\begin{theorem}
A sequent $\Pi\vdash\Sigma$ is valid iff  some closed tableau tree exists for it. 
\end{theorem}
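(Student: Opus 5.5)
The plan is to read the tableau rules as the $\mathrm{D.LE}^r$ rules turned upside down. A tableau for $\Pi\vdash\Sigma$ is then a backward search for a $\mathrm{D.LE}^r$-derivation of $\Pi\nvdash\Sigma$. The statement should follow from Theorem~\ref{thm:soundness}, Theorem~\ref{th:refutationcomplete}, Corollary~\ref{cor:completeness} and the soundness and completeness of $\mathrm{D.LE}$. I would first fix the correspondence precisely. Each tableau rule with premise $S$ is the contrapositive of a $\mathrm{D.LE}^r$ rule with conclusion $S$ (read as an antisequent). Nodes on separate branches correspond to the alternative rules of $\mathrm{D.LE}^r$ that could produce $S$, or to the alternative index choices in $f_R$, $g_L$, $\wedge_L$, $\vee_R$. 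Nodes stacked on one branch correspond to the premises of a single such rule. The open leaf shapes are exactly the conclusions of the zero-premise rules (Ax1--Ax4, $g\ABOT$, $gp$, $gf$, $pf$, $\AATOP f$). Consequently, the existence of an open branch (in the paper's sense: every terminated branch of a suitable choice ends in refutable axioms) mirrors the existence of a $\mathrm{D.LE}^r$-derivation of $\Pi\nvdash\Sigma$.

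\textbf{Termination.} Every non-display tableau rule either strictly lowers the complexity defined at the start of Section~\ref{sec:soundness_completeness} or replaces a structure by $\ABOT$ or $\AATOP$; the latter also lowers complexity, since such a structure was not already $\ABOT$ or $\AATOP$. Display rules preserve the underlying set of substructures, so each sequent has only finitely many display-equivalents. The side condition that a residuation rule may not regenerate an already obtained sequent therefore bounds each branch. By K\"onig's lemma every tableau tree can be extended to a terminated one.

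\textbf{Left to right.} Suppose $\Pi\vdash\Sigma$ is valid, hence derivable in $\mathrm{D.LE}$ by completeness. Assume for contradiction that every terminated tableau is open in the paper's sense. By induction on the terminated tableau (from the leaves up), I would turn the open structure into a $\mathrm{D.LE}^r$-derivation of $\Pi\nvdash\Sigma$: at each node, the successful alternatives supply exactly the premises of the corresponding $\mathrm{D.LE}^r$ rule. Theorem~\ref{thm:soundness} then makes $\Pi\vdash\Sigma$ invalid, a contradiction.

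\textbf{Right to left.} Suppose $\Pi\vdash\Sigma$ is not valid, so it is not derivable in $\mathrm{D.LE}$. Theorem~\ref{th:refutationcomplete} gives a $\mathrm{D.LE}^r$-derivation of $\Pi\nvdash\Sigma$. I would show that every terminated tableau for $\Pi\vdash\Sigma$ then contains this derivation as an open sub-structure, so no closed tableau exists. The point is that the tableau rule applied at a node offers, as one alternative, the premises of the $\mathrm{D.LE}^r$ rule used there.

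\textbf{Main obstacle.} The hard step is exactly this last claim, because tableau rules are applied in an order not dictated by the given $\mathrm{D.LE}^r$ proof. The case analysis in Theorem~\ref{th:refutationcomplete} helps. It shows that for any non-derivable sequent, whichever decomposition is applicable (after displaying), its premises are again non-derivable, and therefore refutable by induction. So I would prove instead the stronger statement that any terminated tableau for a non-derivable sequent is open, by induction on complexity, reusing that case analysis rule by rule, with display rules handled via the four cases \eqref{eq:1}--\eqref{eq:4}. The delicate cases are the branching side conditions on $\wedge_L$ and $\vee_R$ and the residual-free side conditions, since these are what guarantee that some applicable rule always exists.
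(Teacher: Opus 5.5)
Your overall plan matches the paper's: termination, then Theorem~\ref{thm:soundness} for one direction and refutation completeness for the other. The paper's own proof is only a three-line sketch of this. However, the correspondence you fix in your first step is backwards, and your left-to-right argument depends on it. In the tableau, nodes placed on \emph{separate} branches are the jointly required premises of a single $\mathrm{D.LE}^r$ rule. Examples are the $\Upsilon_i\vdash\ABOT$ of $\FH\ABOT$ and of $f_R$, and all $2m+2n+2$ premises of $\wedge_L$ and $\vee_R$ (these two rules involve no index choice at all). Nodes \emph{stacked} on one branch are the alternatives. For instance, $\Pi\vdash A$ and $\Pi\vdash B$ come from the two rules $\wedge_{R_1}$ and $\wedge_{R_2}$, and the dotted chain $\Upsilon_1\vdash\varphi_1,\ldots,\Upsilon_{n_f}\vdash\varphi_{n_f}$ encodes the choice of $j$ in $f_R$. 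Hence a $\mathrm{D.LE}^r$-derivation of $\Pi\nvdash\Sigma$ corresponds to \emph{every} branch containing an axiom-shaped sequent, not to the existence of one open branch. This is exactly why the paper calls a tree open iff all its branches are open. Taken literally, your leaves-up conversion therefore breaks at stacked nodes. An open branch through $\Pi\vdash A$, $\Pi\vdash B$ yields a refutation of only one of them, whereas you treat both as premises that must be supplied. Once you swap the roles, the conversion works. You should then induct over rule applications, with the invariant that the path from the root contains a $\mathrm{D.LE}^r$-derivable antisequent, because the axiom-shaped sequent may sit anywhere on a branch. A shorter route, presumably what the paper's appeal to Theorem~\ref{thm:soundness} means, is semantic. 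If every sequent on a branch is valid and a rule is applied to one of them, then reading soundness contrapositively, some extension again consists of valid sequents. Such a branch contains no axiom-shaped (hence invalid) sequent, so it is closed.

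For the converse, you rightly observe something the paper's appeal to Corollary~\ref{cor:completeness} leaves implicit. Theorem~\ref{th:refutationcomplete}, which moreover requires $\Pi\vdash\Sigma$ to be residual-free, gives one refutation, but that does not by itself make \emph{every} terminated tableau open. Two facts are needed. The first is invertibility: whenever a tableau rule is applied to an invalid sequent, every new branch receives an invalid sequent. This holds for each rule regardless of the order of application, by normality and monotonicity (e.g.\ $\Upsilon_i\leq\bot$ forces $f(\ldots,\Upsilon_i,\ldots)=\bot$ when $\varepsilon_f(i)=1$). The second is exhaustiveness: an invalid residual-free sequent none of whose display-equivalents admits a non-display rule must be axiom-shaped. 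This is what the case analysis of Theorem~\ref{th:refutationcomplete} supplies. Run the induction on the complexity of an invalid sequent lying on a given terminated branch, not on the root, since a subtree is not a tableau for its top node (rules may later be applied to ancestors). Finally, exhaustiveness actually fails for the tableau system as printed, which has no counterparts of $\top_L$ and $\bot_R$. For example, the one-node tableau for $\top\vdash p$ is terminated and closed although $\top\vdash p$ is invalid, so these two rules must be added.
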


\begin{proof}
For any tableau rule different from a residuation one, the number of connectives in each conclusion is lower than the number of connectives in the premise. Moreover, each sequent has finitely many display-equivalent  sequents, and the side condition on the applicability of display rules prevents cycles. Hence, for any sequent $\Pi\vdash\Sigma$, there exists (at least) one terminated tableau tree. We leverage Theorem \ref{thm:soundness} and Corollary \ref{cor:completeness} to complete the proof.
\end{proof}

\section{Conclusions}
\label{sec:conclusions}

This work opens several directions for future research in the study of refutation display calculi. Starting from LE-logics appears to be a promising first step, given their general semantic framework.
It  would also be valuable to extend the current platform to  refutation display calculi for {\em distributive} LE-logics. Moreover, a broader perspective might involve addressing {\em axiomatic extensions} of LE-logics, potentially through the use of {\em hybrid} refutation rules that incorporate both sequents and antisequents.

The main technical motivation for employing display calculi stems from the metatheorem in \cite{Belnap}, which establishes sufficient conditions under which a sequent calculus admits Gentzen-style cut elimination. In systems that include both sequents and antisequents, it becomes possible to define anticut rules -- that is, contrapositive analogues of the standard cut rule \cite{anticut}. It would be intriguing to determine whether sufficient conditions can be specified to support a uniform set of transformation steps enabling anticut elimination.

\bibliographystyle{abbrv}
\bibliography{reference}

\end{document}